\newcommand*\mycirc[1]{%
  \begin{tikzpicture}
    \node[draw,circle,inner sep=1pt] {#1};
  \end{tikzpicture}}
 \newtheorem{thm}{Theorem}
 \newtheorem{cor}[thm]{Corollary}
 \newtheorem{lem}[thm]{Lemma}
 \newtheorem{prop}[thm]{Proposition}
 \newtheorem{defn}[thm]{Definition}
 \newtheorem{rem}{Remark}
 \numberwithin{equation}{section}
\newcommand{\Sc}[2]{\langle #1,#2\rangle}
\newcommand{\Hom}{\mathrm{Hom}}
\newcommand{\End}{\mathrm{End}}
\newcommand{\Ext}{\mathrm{Ext}}
\newcommand{\ses}[3]{0\rightarrow #1\rightarrow #2\rightarrow#3\rightarrow 0}
\newcommand\Rn{\mathbb{R}}
\newcommand\Zn{\mathbb{Z}}
\newcommand\Nn{\mathbb{N}}
\newcommand\Cn{\mathbb{C}}
\begin{document}
\title{Unitarizable representations of quivers}

\author{Thorsten Weist}
\address[Thorsten Weist]{Fachbereich C - Mathematik\\
Bergische Universität Wuppertal\\
D - 42097 Wuppertal, Germany}
\email{weist@math.uni-wuppertal.de}
\author{Kostyantyn Yusenko}
\address[Kostyantyn Yusenko]{Institute of Mathematics  NAS of Ukraine\\
Tereschenkivska str. 3\\
01601 Kiev\\
Ukraine}

\email{kay.math@gmail.com}
\date{\today}
\keywords{Posets \and quivers \and stability \and ADE-classification \and *-algebras \and
rigid tuples}

\begin{abstract}
We investigate representations of $*$-algebras associated
with posets. Unitarizable representations of the corresponding
(bound) quivers (which are polystable representations for some
appropriately chosen slope function) give rise to representations of
these algebras. Considering posets which correspond to unbound quivers this leads to an ADE-classification which describes the unitarization behaviour of their representations. Considering posets which correspond to bound quivers, it is possible to construct unitarizable representations starting with
polystable representations of related unbound quivers which can be glued
together with a suitable direct sum of simple representations.
Finally, we estimate the number of complex parameters parametrizing irreducible unitary non-equivalent representations of the
corresponding algebras.
\end{abstract}

\maketitle
\section*{Introduction}
In the last forty years, finite-dimensional representations of quivers (respectively posets) in the category of vector spaces, also called linear representations in the following, became a huge field of research. It turned out that one can distinguish between (unbound) quivers of representation finite type described and studied by Gabriel in \cite{gab}, tame type described and studied by Nazarova in \cite{Naz73} and the infinite class of quivers of wild type whereas it was proved by Drozd in \cite{dro2} that every representation infinite algebra is either tame or wild. Kleiner and Nazarova  respectively described the posets of representation finite type and representation tame type (see \cite[Theorem 10.1 and Theorem 15.3]{sim}). We refer to Section \ref{reprpos} for precise definitions of (bound) quivers, posets and for the connection between certain representations of (bound) quivers and representations of posets which are all assumed to be finite-dimensional throughout the paper.

During the last years there developed an increasing interest in a full subcategory of the category of linear representations, which is the category of representations of quivers and posets respectively in the category of Hilbert spaces, see for instance \cite{KrugNazRoi,KrugNazRoi2,KrugRoi}. Namely, one can straightforwardly transfer the definitions from the linear to the Hilbert case keeping in mind that the morphisms between two representations should preserve the Hilbert structure, i.e. they are unitary maps. This restriction produces '$*$-wild' problems already in
very simple situations (see \cite{krusa}), i.e. the classification problem contains the classification of two self-adjoint matrices up to unitary isomorphism as a subproblem, see \cite[Chapter 3]{OstrovskyiSamoilenko} for more details. Hence, the idea is to consider representations which
satisfy some additional conditions. That is one of the motivations for studying \textit{orthoscalar representations} of quivers and 
posets, see \cite{KrugNazRoi,KrugRoi} and Section \ref{staralg} for a precise definition. It makes the subject even more interesting that there are lots of topics which are closely related to such representations. For instance such representations are connected to Hermann Weyl's problem of describing the spectra of the sum of two Hermitian matrices and its generalizations, see \cite{Fulton} and Section \ref{staralg} for a precise statement of the problem.

To study orthoscalar representations it is convenient to use the language of $*$-algebras and their $*$-representations (see Section \ref{staralg} for the definition of $*$-algebras). With a poset $\mathcal{N}$ consisting of $n$ elements and a fixed tuple $\chi=(\chi_0;\chi_1,\ldots,\chi_n) \in \mathbb R_+^{n+1}$, called \textit{weight} in what follows, we associate the following $*$-algebra over the complex numbers 
$$
    \mathcal A_{\mathcal N,\chi}=\mathbb C\left\langle p_1,\ldots,p_n\ \Bigg |
    \begin{array}{l}
           p_i=p_i^*=p_i^2 \\
           p_jp_i=p_ip_j=p_i,\quad i\prec j \\
           \chi_1p_1+\ldots+\chi_n p_n=\chi_0 e
    \end{array} \right \rangle,
$$
where $e$ denotes the identity element. Two problems naturally arise: to find those weights $\chi$ for which the algebra $\mathcal A_{\mathcal N,\chi}$ has at least one non-zero representation; for
appropriate $\chi$, to describe all irreducible $*$-representations up to
unitary equivalence.  The second problem could turn out to be
''hopeless", i.e. the isomorphism classes of $*$-representations can
depend on arbitrarily many continuous parameters, or the algebra can
be even $*$-wild.
 
In this paper we only consider finite-dimensional representations of $\mathcal A_{\mathcal N,\chi}$.  Whereas every representation of $\mathcal A_{\mathcal N,\chi}$ canonically defines a representation of $\mathcal N$ in the category of vector spaces, it is not clear which kind of linear representations of $\mathcal N$ allow a choice of a Hermitian metric such that they give rise to $*$-representations of $\mathcal A_{\mathcal N,\chi}$ (those objects that possess such a choice are called \textit{unitarizable} following \cite{KrugRoi}). One of the essential parts of this paper is devoted to this problem.
Namely, we say that a representation $(V_0;(V_i)_{i\in\mathcal{N}})$ of $\mathcal{N}$ can be \textit{unitarized} with the weight $\chi$ if it possesses a choice of a Hermitian structure in $V_0$ in such a way that for the corresponding
projections $P_i:V_0\rightarrow V_i$ the following equality holds
$$
    \chi_1P_1+\ldots+\chi_n P_n=\chi_0 \mathbb I.
$$

We use the fact that unitarizable representations of posets can be identified with polystable representations of quivers, i.e. representations which can be decomposed into stable ones of the same slope, going back to King's work \cite{kin}, see Section \ref{rep} for more details. This approach turns out to be very useful because lots of statements which are known for stable quiver representations can be used and easily transferred to representations of posets.\\After recalling the notion of general representations of quivers in Section \ref{unitgen} as introduced by Schofield, see \cite{sch}, we state our first result, Theorem \ref{thm1}, saying that each general Schurian representation of a poset corresponding to an unbound quiver can be unitarized with a certain weight and hence gives rise to a $*$-representation of $\mathcal A_{\mathcal N,\chi}$. Recall that Schurian representations are representations whose endomorphism ring consists only of elements of the base field. Moreover, we specify those weights $\chi$ in terms of the dimension vector of the corresponding representation. This result is later used to show that the corresponding Hermitian operators are rigid in
the sense of N.Katz in \cite{Katz96}, see Section 3.2 for a precise definition and Theorem \ref{rigidprimitive} for the statement.\\\\
In Section \ref{boundquivers} we consider quivers (which are also related to posets) which are bound by some ideal as recalled in Section \ref{reprpos}. Actually, this covers plenty of interesting cases of quivers which were not considered in \cite{KrugNazRoi,KrugRoi}. On the one hand Section \ref{boundquivers} gives an explicit construction of stable representations of bound quivers including the description of the linear form defining the stability condition. On the other hand, since the constructed representations are unitarizable representations of the corresponding posets it is again possible to understand them as representations in the category of Hilbert spaces.  More detailed, starting with a polystable representation of a related unbound quiver we glue its direct summands together with a direct sum of simple representations, which corresponds to the vertex where the relations determining the ideal start, in such a way that the resulting representations are stable and satisfy the relations corresponding to the ideal. This result can also be seen as a starting point of a classification of unitarizable representations of non-primitive posets, see Remark \ref{bem} for more details. Recall that a primitive poset is the cardinal sum of linearly ordered sets, see also Section \ref{reprpos} for more details.\\\\
We start Section \ref{unitnon} by illustrating this method on examples and finish it stating the next result which classifies posets corresponding to unbound quivers due to their unitarization behaviour. To do so we make also use of results of \cite{hp,sch}. More detailed we state that if the poset is of representation finite type, then every indecomposable representation is unitarizable, see \cite{GrushevoyYusenko}, if it is primitive and of representation tame type then every Schurian representations is unitarizable, which essentially follows from \cite{hp}. What we show is that in the representation wild case, in addition to the unitarizable representations described in Theorem \ref{thm1}, there are also
non-unitarizable Schurian representations which depend on an arbitrary number of complex parameters. It is remarkable that the classification mentioned in the beginning of the introduction also appears in this situation.\\ 
Let us remark that it is still an open question whether or not each Schurian representation
of a non-primitive tame poset is unitarizable. Nevertheless, the
article gives an idea how to handle these cases. Thereby, the main idea is to extend some non-primitive subposet and its representations appropriately using the methods of Section \ref{boundquivers}.\\\\
The main result of the last section concerns the number of parameters that
parametrize non-equivalent irreducible $*$-representations of
$\mathcal A_{\mathcal N,\chi}$. If $\mathcal N$ is primitive and of
representation finite type, then it is known that one discrete parameter
parametrizes all irreducible nonequivalent representations (see \cite{KrugPopSam}) and that if $\mathcal
N$ is primitive and of representation tame type, it is at most one continuous
parameter depending on the weight $\chi$ (see \cite{albostsam} and references therein). We show that if
$\mathcal N$ is of representation wild type (primitive or non-primitive), then there exists
a weight $\chi_{\mathcal N}$ such that there are families of
non-equivalent irreducible $*$-representations of $\mathcal
A_{\mathcal N,\chi_{\mathcal N}}$ which depend on arbitrary many continuous
parameters; conjecturally such algebras are of $*$-wild representation type.

\section*{Acknowledgements}
The first author likes to thank Klaus Bongartz and Markus Reineke
for very helpful discussions and comments. The second author thanks
the Hausdorff Institute (Bonn, Germany), Chalmers University of
Technology (Gothenburg, Sweden) for hospitality and also Vasyl
Ostrovskiy, Yurii Samoilenko and  Ludmila Turowska for helpful
remarks. The second author was partially supported by the Swedish
Institute and DFG grant SCHM1009/4-1. This paper was initiated
during a visit of the second author to the University of Wuppertal
(Germany), the hospitality of the Algebra and Number Theory Group is
gratefully acknowledged.

The article will appear in Algebras and Representation Theory. The final publication is available at springerlink.com, DOI: 10.1007/s10468-012-9360-4.

\section{Preliminaries} \label{Section1}

\subsection{Representations of quivers and posets}\label{reprpos}
Let $Q$ be a finite quiver which is given by a set of vertices $Q_0$ and a set of arrows $Q_1$ denoted by $\rho:q\rightarrow q'$ for $q,q'\in Q_0$. The vertex $q$ is called {\it tail}, and the vertex $q'$ is called {\it head} of the arrow $\rho$. A vertex $q\in Q_0$ is called {\it sink} if there does not exist an arrow $\rho:q\rightarrow q'\in Q_1$. A vertex $q\in Q_0$ is called {\it source} if there does not exist an
arrow $\rho:q'\rightarrow q\in Q_1$.\\
%A quiver is called bipartite if there exists a decomposition $Q_0=I\cup J$ into sinks $I$ and sources $J$.
For a
vertex $q\in Q_0$ let
\[N_q=\{q'\in Q_0\mid \exists \rho:q\rightarrow q'\vee \exists \rho:q'\rightarrow q\}\]
be the set of its neighbours.\\
In the following we only consider finite quivers without oriented cycles.
Define the abelian group
\[\mathbb{Z}Q_0=\bigoplus_{q\in Q_0}\mathbb{Z}q\] and the monoid of dimension vectors $\mathbb{N}Q_0$.\\
Let $k$ be an algebraically closed field. A finite-dimensional $k$-representation of $Q$ is given by a tuple
\[X=((X_q)_{q\in Q_0},(X_{\rho})_{\rho \in Q_1}:X_q\rightarrow X_{q'})\]
of finite-dimensional $k$-vector spaces and $k$-linear maps between
them. A morphism of representations $f:X\rightarrow Y$ is a tuple $f=(f_q:X_q\rightarrow Y_q)_{q\in Q_0}$ of $k$-linear maps such that $Y_{\rho}f_q=f_{q'}X_{\rho}$ for all $\rho:q\rightarrow q'$. We denote by $\mathrm{Rep}(Q)$ the abelian category of finite-dimensional representations of $Q$. 

We call a representation $X$ \textit{Schurian} if $\End(X):=\Hom(X,X)=k$. We say that $X$ is \emph{strict} if all maps $X_{\rho}$ are
injective. The dimension vector $\underline{\dim}X\in\mathbb{N}Q_0$
of $X$ is defined by
\[\underline{\dim}X=\sum_{q\in Q_0}\dim X_q\cdot q.\]
Let $\alpha \in\mathbb{N}Q_0$ be a dimension vector. The variety
$R_\alpha(Q)$ of $k$-representations of $Q$ with dimension vector
$\alpha$ is defined as the affine $k$-space
\[R_\alpha(Q)=\bigoplus_{\rho:q\rightarrow q'} \mathrm{Hom}( k^{\alpha_q},k^{\alpha_{q'}}).\]
The algebraic group $G_\alpha=\prod_{q\in Q_0} \textrm{Gl}_{\alpha_q}(k)$
acts on $R_\alpha(Q)$ via simultaneous base change, i.e.
\[(g_q)_{q\in Q_0}\ast
(X_{\rho})_{\rho\in Q_1}=(g_{q'}X_{\rho}g_{q}^{-1})_{\rho:q\rightarrow
q'}.\] The orbits are in bijection with the isomorphism classes of
$k$-representations of $Q$ with dimension vector $\alpha$.

Let $kQ$ be the path algebra of $Q$ and let $RQ$ be the arrow ideal, see \cite[Chapter II.1]{ass} for a definition. A relation in $Q$ is a $k$-linear combination of paths of length at
least two which have the same head and tail. For a set of relations
$(r_j)_{j\in J}$ we can consider the admissible ideal $I$ generated
by these relations, where admissible means that we have $RQ^m\subseteq
I\subseteq RQ^2$ for some $m\geq 2$. Now a representation $X$ of $Q$
is bound by $I$, and thus a representation of the bound quiver
$(Q,I)$, if $X_{r_j}=0$ for all $j\in J$. For every dimension vector
this defines a closed subvariety of $R_\alpha(Q)$ denoted by
$R_\alpha(Q,I)$. If $R$ is a minimal set of relations generating
$I$, by $r(q,q',I)$ we denote the number of relations with starting
vertex $q$ and terminating vertex $q'$. Following \cite{bon}, for the
dimension of $R_\alpha(Q,I)$ we get
\[\dim R_\alpha(Q,I)\geq\dim R_\alpha(Q)-\sum_{(q,q')\in (Q_0)^2}r(q,q',I)\alpha_q \alpha_{q'}.\]

Let $C_{(Q,I)}$ be the Cartan matrix of $(Q,I)$, i.e. $c_{q',q}=\dim
e_q(kQ/I)e_{q'}$ where $e_q$ denotes the primitive idempotent (resp.
the trivial path) corresponding to the vertex $q$. On $\Zn Q_0$ a
non-symmetric bilinear form, the Euler characteristic, is defined by
\[\Sc{\alpha}{\beta}:=\alpha^T(C_{(Q,I)}^{-1})^T\beta.\]
Then for two representation $X$ and $Y$ we have
\[\Sc{X}{Y}:=\Sc{\underline{\dim} X}{\underline{\dim} Y}=\sum_{i=0}^{\infty}(-1)^i\dim\Ext^i(X,Y),\]
see for instance \cite[Proposition 3.13]{ass}.
Moreover, if $Q$ is unbound, for two representations $X$, $Y$ of $Q$ with
$\underline{\dim} X=\alpha$ and $\underline{\dim} Y=\beta$ we have
\[\Sc{X}{Y}=\dim\Hom(X,Y)-\dim\Ext(X,Y)=\sum_{q\in Q_0}\alpha_q \beta_q-\sum_{\rho:q\rightarrow q'\in Q_1}\alpha_q\beta_{q'}\]
and $\Ext^i(X,Y)=0$ for $i\geq 2$, see \cite[Section 2]{rin}.\\ As usual we call a dimension vector $\alpha\in\Nn Q_0$ a root of the quiver $Q$ if there exists an indecomposable representation of $Q$ with $\underline{\dim}X=\alpha$. A root is called real if $\Sc{\alpha}{\alpha}=1$ and imaginary otherwise. \\

Let $X$ and $Y$ be two representations of a quiver $Q$. Then we can
consider the linear map
\[\gamma_{X,Y}:\bigoplus_{q\in Q_0}\Hom(X_q,Y_q)\rightarrow\bigoplus_{\rho:q\rightarrow q'\in Q_1}\Hom(X_q,Y_{q'})\]
with $\gamma_{X,Y}((f_q)_{q\in Q_0})=(Y_{\rho}f_q-f_{q'}X_{\rho})_{\rho:q\rightarrow q'\in Q_1}$.\\
We have $\ker(\gamma_{X,Y})=\Hom(X,Y)$ and
$\mathrm{coker}(\gamma_{X,Y})=\Ext(X,Y)$, see \cite[Section 2]{rin}. The first
statement is obvious. The second one follows because every exact
sequence $E(f)\in\Ext(X,Y)$ is defined by a morphism
$f\in\bigoplus_{\rho:q\rightarrow q'\in Q_1}\Hom(X_q,Y_{q'})$ in the
following way
\[\ses{Y}{((Y_q\oplus X_q)_{q\in Q_0},(\begin{pmatrix}Y_{\rho}&f_{\rho}\\0&X_{\rho}\end{pmatrix})_{\rho\in Q_1})}{X}\]
with the canonical inclusion on the left hand side and the canonical projection on the right hand side. Now it is straightforward to check that two sequences $E(f)$ and $E(g)$ are equivalent if and only if $f-g\in\mathrm{Im}(\gamma_{X,Y})$.\\
As far as bound quivers are concerned, we just have to consider those
exact sequences such that the middle term also satisfies the
relations, thus we have $\Ext_{(Q,I)}(X,Y)\subseteq\Ext_{Q}(X,Y)$.\\\\

Let $\mathcal N$ be a finite poset where $\prec$ denotes the
partial order in $\mathcal N$. 
%In what follows (unless otherwise
%stated) we assume that $\mathcal N=\{q_1,\ldots,q_n\}$. 
A finite-dimensional representation of $\mathcal{N}$ is given by a
collection of finite-dimensional $k$-vector spaces
$$V=(V_0;(V_q)_{q\in\mathcal{N}})$$
such that $V_q\subseteq V_0$ for all
$q\in\mathcal{N}$ and $V_q\subseteq V_{q'}$ if $q\prec q'$. A morphism
between two representations $(V_0;(V_q)_{q\in\mathcal{N}})$ and
$(W_0;(W_q)_{q\in\mathcal{N}})$ is given by a $k$-linear map
$g:V_0\rightarrow W_0$ such that $g(V_q)\subseteq W_q$ for all
$q\in\mathcal{N}$. By $\mathrm{Rep}(\mathcal{N})$ we denote the additive category of representations of a poset $\mathcal{N}$.

Denote by $\mathcal N^0$ the extension of $\mathcal N$ by a unique maximal element $q_0$. With $\mathcal N$ we associate the Hasse quiver of $\mathcal N^0$ which will be denoted by $Q(\mathcal N)$, i.e. we orient all edges of the Hasse diagram of the poset $\mathcal{N}$ to the unique maximal element. The other way around let $Q$ be a connected quiver without oriented cycles and multiple arrows.
Moreover, we assume that all arrows are oriented to one vertex $q_0$ which
is called the root. To $Q$ we can naturally associate the poset $\mathcal{N}(Q)=Q_0\backslash \{q_0\}$ such that $q\prec q'$ if and only if there exists a path from $q$ to $q'$.
A poset is said to be \emph{primitive} if it is the disjoint (cardinal) sum of
linearly ordered sets $L_i$ of order $n_i$. In this case we
denote the poset and corresponding quiver by $(n_1,\ldots,n_s)$. Since the quiver $Q(\mathcal N)$ corresponding to a primitive poset $\mathcal N$ has a star-shaped form, it is called \textit{star-shaped}.

We recall the classification of posets by their representation type (see for example \cite[Chapter 10 and Chapter 15]{sim} for precise definitions of the terms finite, tame and wild representation type).

\begin{thm} (Kleiner \cite[Theorem 10.1]{sim} and Nazarova \cite[Theorem 15.3]{sim}) \label{reptype}
\begin{enumerate}
\item
A poset $\mathcal N$ is of representation finite type if and only if the quiver $Q(\mathcal N)$ does not contain any of the following critical quivers 
\begin{center}
$\xymatrix @-1pc { &{\circ}\ar@{->}[d]&\\
            {\circ} \ar@{->}[r]&{\circ}\ar@{<-}[d]\ar@{<-}[r]&{\circ}\\
            &{\circ}&}$
            \qquad      
$\xymatrix @-1pc { &&{\circ}\ar@{->}[d]&&\\
            &&{\circ}\ar@{->}[d]&&\\
            {\circ}\ar@{->}[r]&{\circ}\ar@{->}[r]&{\circ}\ar@{<-}[r]&{\circ}\ar@{<-}[r]&{\circ}&}$
\qquad
$\xymatrix @-1pc {\\&&&{\circ}\ar@{->}[d]&&\\
            {\circ}\ar@{->}[r]&{\circ}\ar@{->}[r]&{\circ}\ar@{->}[r]&{\circ}\ar@{<-}[r]&{\circ}\ar@{<-}[r]&{\circ}\ar@{<-}[r]&{\circ}}$
\bigskip 

$\xymatrix @-1pc {\\&&{\circ}\ar@{->}[d]&&\\
            {\circ}\ar@{->}[r]&{\circ}\ar@{->}[r]&{\circ}\ar@{<-}[r]&{\circ}\ar@{<-}[r]&{\circ}\ar@{<-}[r]&{\circ}\ar@{<-}[r]&{\circ}\ar@{<-}[r]&{\circ}}$
\qquad
$\xymatrix @-1pc
            {&&&{\circ}\ar@{->}[r]&{\circ}\ar@{->}[dr]\\
             &&&{\circ}\ar@{->}[r]\ar@{->}[ur]&{\circ}\ar@{->}[r]&{\circ}\\
             &{\circ}\ar@{->}[r]&{\circ}\ar@{->}[r]&{\circ}\ar@{->}[r]&{\circ}\ar@{->}[ur]}$
\end{center}
 as a proper subquiver.

\item A poset $\mathcal N$ is of representation tame type if and only if the quiver $Q(\mathcal N)$ does not contain any of the following critical quivers 

\begin{center}
$\xymatrix @-1pc { {\circ} \ar@{->}[dr]&{\circ}\ar@{->}[d]&\\
            {\circ} \ar@{->}[r]&{\circ}\ar@{<-}[d]\ar@{<-}[r]&{\circ}\\
            &{\circ}&}$
            \qquad
$\xymatrix @-1pc { &{\circ}\ar@{->}[d]&\\
            {\circ} \ar@{->}[r]&{\circ}\ar@{<-}[d]\ar@{<-}[r]&{\circ}\ar@{<-}[r]&{\circ}\\
            &{\circ}&}$
            \qquad
$\xymatrix @-1pc { &&{\circ}\ar@{->}[d]&&\\
            &&{\circ}\ar@{->}[d]&&\\
            {\circ}\ar@{->}[r]&{\circ}\ar@{->}[r]&{\circ}\ar@{<-}[r]&{\circ}\ar@{<-}[r]&{\circ}\ar@{<-}[r]&{\circ}}$

\medskip

$\xymatrix @-1pc {&&&{\circ}\ar@{->}[d]&&\\
            {\circ}\ar@{->}[r]&{\circ}\ar@{->}[r]&{\circ}\ar@{->}[r]&{\circ}\ar@{<-}[r]&{\circ}\ar@{<-}[r]&{\circ}\ar@{<-}[r]&{\circ}\ar@{<-}[r]&{\circ}}$
\qquad
$\xymatrix @-1pc {&&{\circ}\ar@{->}[d]&&\\
            {\circ}\ar@{->}[r]&{\circ}\ar@{->}[r]&{\circ}\ar@{<-}[r]&{\circ}\ar@{<-}[r]&{\circ}\ar@{<-}[r]&{\circ}\ar@{<-}[r]&{\circ}\ar@{<-}[r]&{\circ}\ar@{<-}[r]&{\circ}}$

\medskip
$\xymatrix @-1pc
            {&&&{\circ}\ar@{->}[r]&{\circ}\ar@{->}[dr]\\
             &&&{\circ}\ar@{->}[r]\ar@{->}[ur]&{\circ}\ar@{->}[r]&{\circ}\\
             {\circ}\ar@{->}[r]&{\circ}\ar@{->}[r]&{\circ}\ar@{->}[r]&{\circ}\ar@{->}[r]&{\circ}\ar@{->}[ur]}$

\end{center}
as a proper subquiver.

\end{enumerate}
\end{thm}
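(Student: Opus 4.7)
The plan is to prove each of the two equivalences by the usual ``necessity plus sufficiency'' split, exploiting the standard dictionary between representations of $\mathcal N$ and representations of the star-shaped quiver $Q(\mathcal N)$.

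For necessity, i.e.\ that containing any of the listed critical subquivers forces infinite (resp.\ wild) type, I would proceed critical quiver by critical quiver. If $Q'$ is a subquiver of $Q(\mathcal N)$ with root $q_0$, then $\mathrm{Rep}(\mathcal N(Q'))$ embeds as a full exact subcategory of $\mathrm{Rep}(\mathcal N)$ via extension by zero at the extra vertices, so it is enough to show that each critical poset already fails to be of finite (resp.\ tame) type. For the Kleiner list this amounts to exhibiting a one-parameter family of pairwise non-isomorphic indecomposables: for the subspace poset $(1,1,1,1)$ this is the classical four-subspace problem, and for $(2,2,2)$, $(1,3,3)$, $(1,2,5)$ and $(\mathbf N,4)$ one writes down explicit families parametrised by $\mathbb P^1(k)$. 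For the Nazarova list I would instead embed the category of representations of the free algebra $k\langle x,y\rangle$, using known $\ast$-wild constructions (for instance, matrix problems of ``pair of commuting subspaces modulo a subspace'' type) to produce a controlled wild subcategory in each case.

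For sufficiency I would use the Tits quadratic form $q_{\mathcal N}(\alpha)=\Sc{\alpha}{\alpha}$ of the bound quiver $Q(\mathcal N)$, together with the Nazarova--Roiter differentiation algorithm. First I would check, by a direct combinatorial argument on the shape of $Q(\mathcal N)$, that avoiding all five Kleiner quivers is equivalent to $q_{\mathcal N}$ being positive definite, while avoiding all six Nazarova quivers is equivalent to $q_{\mathcal N}$ being non-negative with radical of rank at most one. Next I would set up differentiation with respect to a maximal (or suitable pair of) element of $\mathcal N$, showing that it defines a functor $\mathrm{Rep}(\mathcal N)\to \mathrm{Rep}(\mathcal N')$ which is a bijection on isomorphism classes of indecomposables outside a finite list of ``trivial'' summands, and which strictly decreases the size of $\mathcal N$ while preserving the positive definiteness (resp.\ non-negativity) of $q_{\mathcal N}$. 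An induction on $|\mathcal N|$ then reduces everything to a base case of very small posets, which one handles by hand.

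The main obstacle, and the genuinely hard part, is the sufficiency direction: one has to verify carefully that the differentiation procedure is well defined, preserves the no-critical-subposet hypothesis, and that the resulting bijection on indecomposables really is everywhere defined. This is where most of the work in Kleiner's and Nazarova's original arguments lies, because the combinatorics of which differentiation to apply depends on subtle features of the Hasse diagram, and the wild case in particular requires a separate argument (e.g.\ Drozd's tame-wild dichotomy) to rule out intermediate behaviour and to pin down that the six Nazarova quivers are exactly the minimal wild posets. The easier but still delicate part is matching the list of critical quivers with the conditions on $q_{\mathcal N}$, which is a finite computation on the Dynkin/Euclidean star-shaped diagrams $\tilde{\mathbb D}_4,\tilde{\mathbb E}_6,\tilde{\mathbb E}_7,\tilde{\mathbb E}_8$ and their ``one step larger'' analogues.
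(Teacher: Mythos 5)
This theorem is not proved in the paper at all: it is quoted verbatim as a known classical result, with the proof delegated to Kleiner and Nazarova via Simson's book (\cite[Theorems 10.1 and 15.3]{sim}). So there is no ``paper's proof'' to compare against; your proposal has to be judged on its own as an outline of the classical argument.

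As such an outline it identifies the right ingredients (reduction to the critical subposets, the Tits/Drozd quadratic form, the Nazarova--Roiter differentiation algorithm, Drozd's tame--wild dichotomy), but it is a plan rather than a proof: the entire sufficiency direction, which you yourself flag as ``where most of the work lies,'' is deferred to an unexecuted induction whose key step (well-definedness of differentiation, its behaviour on indecomposables, preservation of the no-critical-subposet hypothesis) is exactly the content of the theorem. Two concrete inaccuracies in what you do write down: (i) the correct quadratic-form characterisations are \emph{weak} positivity (positivity on nonzero vectors with nonnegative coordinates) for finite type and weak non-negativity for tame type, not positive definiteness and corank-one non-negativity --- for non-primitive posets such as those containing $(N,4)$ the Drozd form of a finite-type poset need not be positive definite on all of $\Zn^{|\mathcal N|+1}$; (ii) ``extension by zero at the extra vertices'' does not in general embed $\mathrm{Rep}$ of a subposet into $\mathrm{Rep}(\mathcal N)$, because if an omitted element $q$ satisfies $q'\prec q$ for some retained $q'$ one would need $V_{q'}\subseteq V_q=0$; the standard fix is to set $V_q=\sum_{q'\preceq q}V_{q'}$ for the omitted elements, which is what actually yields the inheritance of representation type by subposets. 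Neither issue is fatal to the strategy, but both would need to be repaired, and the differentiation induction would need to be carried out in full, before this could count as a proof.
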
 

In the following, the two non-primitive posets in the previous theorem will be denoted by $(N,4)$ and $(N,5)$ respectively. 

We briefly recall the relation between representations of posets and representations of bound quivers. Everything presented here is well-known, see for instance \cite{dro} for a more general setup.\\
Let $Q(\mathcal{N})$ be the quiver induced by a poset $\mathcal{N}$. Let $\alpha \in\Nn Q(\mathcal{N})_0$ be a dimension vector.
By $S_\alpha(Q(\mathcal{N}))\subset R_\alpha(Q(\mathcal{N}))$ we denote the (possibly empty) open subvariety of
strict representations. For every (non-oriented) cycle
$\rho_1\ldots\rho_n\tau_m^{-1}\ldots\tau_1^{-1}$ with
$\rho_i,\,\tau_j\in Q(\mathcal{N})_1$ and $\rho_i\neq\tau_j$ we define a relation
\[\rho_1\ldots\rho_n-\tau_1\ldots\tau_m.\]
Let $I$ be the ideal generated by all such relations.

Let $V=(V_0;(V_q)_{q \in \mathcal N})$ be a representation of $\mathcal N$ with dimension vector
$\alpha$. This defines a representation $F(V)\in S_\alpha(Q(\mathcal{N}),I)$
satisfying the stated relations. Indeed, every inclusion $V_q\subset
V_{q'}$ defines an injective map $F(V)_{\rho_{q,q'}}:V_q\rightarrow
V_{q'}$. For two
arbitrary representations $V$ and $W$ a morphism $g:V\rightarrow W$,
defines a morphism $F(g):F(V)\rightarrow F(W)$ where
$F(g)_q:=g|_{V_q}:F(V)_q\rightarrow F(W)_q$.

The other way around let $X\in S_\alpha(Q(\mathcal{N}),I)$. This gives rise to a
representation $G(X)$ of $\mathcal{N}$ by defining
$G(X)_q=X_{\rho^q_n}\circ\ldots\circ X_{\rho_1^q}(X_q)$ for some
path $\rho_1^q\ldots\rho_n^q$ from $q$ to $q_0$.
This definition is independent of the chosen path. Moreover, every morphism $f=(f_q)_{q\in Q_0}:X\rightarrow Y$ defines a morphism $G(f)$ which is induced by $f_{q_0}:X_0\rightarrow Y_0$.\\
Thus we get an equivalence between the categories of strict
representations of $Q(\mathcal{N})$ bound by $I$ and representations of
$\mathcal{N}$. This equivalence also preserves dimension vectors.

If the global dimension of $kQ(\mathcal{N})/I$ is at most two, see for instance \cite[Chapter A.4]{ass} for a definition, for two
representations $X$ and $Y$ with $\underline{\dim} X=\alpha$ and
$\underline{\dim} Y=\beta$ we get
\begin{eqnarray*}\Sc{X}{Y}&=&\dim\Hom(X,Y)-\dim\Ext^1(X,Y)+\dim\Ext^2(X,Y)\\&=&\sum_{q\in Q(\mathcal{N})_0}\alpha_q\beta_q-\sum_{\rho:q\rightarrow q'\in Q(\mathcal{N})_1}\alpha_q\beta_{q'}+\sum_{(q,q')\in (Q(\mathcal{N})_0)^2}r(q,q',I)\alpha_q\beta_{q'},\end{eqnarray*}
see \cite{bon}. This defines a quadratic form $q_{Q(\mathcal{N})}(\alpha):=\Sc{\alpha}{\alpha}$, often called Tits form (in some cases it coincides with the Drozd form for posets as introduced in \cite{dro}, see also \cite{sim2} for the connection between different quadratic forms associated with posets).\\
In order to shorten notation and because we are also only interested in bound representation of $Q(\mathcal{N})$ if it has unoriented cycles, we denote by $Q(\mathcal{N})$ the quiver $Q(\mathcal{N})$ bound by $I$ as constructed above. In particular, if $\mathcal{N}$ contains pairwise disjoint elements $q_1,q_2,q_3$ such that $q_1\prec q_2$ and $q_1\prec q_3$, the quiver $Q(\mathcal{N})$ is bound, otherwise it is unbound.
\subsection{Orthoscalar representations of posets and quivers. $*$-Algebras associated to posets and graphs.}\label{staralg}

Let $k=\Cn$. Fix a poset $\mathcal{N}$ and the corresponding quiver $Q(\mathcal{N})$. Let $\mathcal{H}$ be the category of (finite-dimensional) Hilbert spaces. Consider the subcategory $\mathrm{Rep}(Q(\mathcal{N}),\mathcal{H})$ of $\mathrm{Rep}(Q(\mathcal{N}))$ consisting of representations $X$ such that $X_q$ are Hilbert spaces for all $q\in Q_0$. Denoting by $X_{\rho}^*$ the adjoint linear map of $X_{\rho}$ for a morphism $f:X\rightarrow Y$ we additionally require that $f_qY^{*}_{\rho}=X_{\rho}^*f_{q'}$ for all $\rho:q\rightarrow q'$. It is straightforward to check (see for example \cite[Chapter 1]{OstrovskyiSamoilenko} for a similar statement) that two representations $X$ and $Y$ are isomorphic in $\mathrm{Rep}(Q(\mathcal{N}),\mathcal{H})$, i.e. there exists an invertible morphism $f:X\rightarrow Y$, if and only if they are unitary isomorphic, i.e. $f_{\rho}$ is a unitary linear map for every $\rho\in Q(\mathcal{N})_1$. Now we may easily transfer this definition to $\mathrm{Rep}(\mathcal{N})$ and form the category $\mathrm{Rep}(\mathcal{N},\mathcal{H})$. 
Even in simple cases the description of indecomposable objects in the category 
$\mathrm{Rep}(\mathcal{N},\mathcal{H})$ is a very hard, so-called $*$-wild, problem \cite{krusa}. Thus it is natural to consider subcategories of $\mathrm{Rep}(\mathcal{N},\mathcal{H})$. 

We say that an object $V=(V_0,(V_q)_{q\in\mathcal{N}})$ of $\mathrm{Rep}(\mathcal{N},\mathcal{H})$ is \textit{orthoscalar} if there exists a \textit{weight} $\chi=(\chi_0,(\chi_q)_{q\in \mathcal N})\in\Rn^{|\mathcal{N}|+1}$ such that 
\begin{equation}
\sum_{q \in \mathcal N}\chi_qP_q=\chi_0P_0 \label{orthorelation}
\end{equation}
where the $P_i$ is the orthogonal projection onto the subspace $V_i$. Denote this category, which is a full subcategory of $\mathrm{Rep}(\mathcal{N},\mathcal{H})$, by $\mathrm{Rep}(\mathcal{N},\mathcal{H})_{\mathrm{os}}$. We should mention that the term \textit{locally-scalar} instead of orthoscalar also appears in the literature (see \cite{KrugRoi}). Note that if $\alpha\in\Nn^{|\mathcal{N}|+1}$ is the dimension vector of $V$ by taking the trace on both sides of \eqref{orthorelation} we obtain
\[\sum_{q\in\mathcal{N}}\chi_q\alpha_q=\chi_0\alpha_0.\]

Every object of $\mathrm{Rep}(\mathcal{N},\mathcal{H})_{\mathrm{os}}$ can be identified with an object of $\mathrm{Rep}(\mathcal{N})$ applying the forgetful functor. Thus it is natural to ask for which kind of representations $V=(V_0,(V_q)_{q\in\mathcal{N}})$ we can choose a hermitian form such that there exists a weight $\chi=(\chi_0,(\chi_q)_{q\in\mathcal{N}}) \in\Rn^{|\mathcal{N}|+1}$ such that the corresponding orthoprojections $P_q$ onto subspaces $V_q$ satisty \eqref{orthorelation}.
In this case, following \cite[Section 4]{KrugRoi} we say that $V$ is \textit{unitarizable} with (or can be \textit{unitarized} with) the weight $\chi$.

Recall that a \textit{$*$-algebra} is an algebra $\mathcal A$ over $\Cn$ together with an anti-automorphism $~^*:\mathcal A\rightarrow \mathcal A$, i.e. $~^{*}$ is an algebra automorphism such that $(ab)^*=b^{*}a^{*}$ and $(a^*)^*=a$ for all $a,b\in \mathcal A$.

Assume that $\mathcal N$ consists of $n$ points. For a given weight $\chi=(\chi_0;\chi_1,\ldots,\chi_n) \in \mathbb
R_+^{n+1}$ consider the $*$-algebra defined by 
$$
    \mathcal A_{\mathcal N,\chi}=\mathbb C\left\langle p_1,\ldots,p_n\quad \Bigg |
    \begin{array}{l}
           p_i=p_i^*=p_i^2 \\
           \chi_1p_1+\ldots+\chi_n p_n=\chi_0 e \\
           p_jp_i=p_ip_j=p_i,\quad i\prec j
    \end{array} \right \rangle
$$
where $e$ denotes the identity element of $\mathcal A_{\mathcal N,\chi}$.

The objects of $\mathrm{Rep}(\mathcal{N},\mathcal{H})_{\mathrm{os}}$ correspond to finite-dimensional $*$-representations of $\mathcal A_{\mathcal N,\chi}$ and results about the structure of these objects can be formulated in terms of representations of $\mathcal A_{\mathcal N,\chi}$. Let us describe how these algebras are related to $*$-algebras associated with star-shaped graphs (considered for example in \cite{albostsam,KrugPopSam}) in the case when the poset is primitive.

Let $\Gamma=(\Gamma_0,\Gamma_1)$ be a connected graph with vertices $\Gamma_0$ and edges $\Gamma_1$. 
For a given poset $\mathcal N$ by $\Gamma(\mathcal N)$ we denote the underlying graph of the quiver $Q(\mathcal N)$. 
We call $\Gamma$ \textit{star-shaped} if it is the underlying graph of a star-shaped quiver.
Clearly $\Gamma(\mathcal N)$ is star-shaped if and ony if $\mathcal N$ is primitive.  

Assuming that the graph $\Gamma$ is of the type $(m_1,\ldots,m_n)$ we identify the set of vertices $\Gamma_0$ with $(g_0;g_i^{(j)})$, where $g_0$ is the root
vertex and $g_{i_1}^{(j)}$ and $g_{i_2}^{(j)}$ lie on the same
branch of $\Gamma$. Fixing some vector $\omega=(\omega_0;\omega_{i}^{(j)}) \in \mathbb
R^{|\Gamma_0|}_+$ with
$\omega_{i_1}^{(j)}>\omega_{i_2}^{(j)}$ if $i_1>i_2$ (following \cite{albostsam,KrugPopSam} a vector with such properties is called \textit{character}), we consider the $*$-algebra
$$
    \mathcal B_{\Gamma,\omega}=\mathbb C\left\langle a_1,\ldots,a_n\quad \Bigg |
    \begin{array}{l}
           a_i=a_i^* \\
          (a_i-\omega_1^{(i)})\ldots (a_i-\omega_{m_i}^{(i)})=0 \\
          a_1+\ldots+a_n=\omega_0 e
    \end{array} \right \rangle.
$$
Any $*$-representation of $\mathcal B_{\Gamma,\omega}$ in some
Hilbert space is given by an $n$-tuple of Hermitian operators with
spectra $\sigma(A_i)\in
\{\omega_1^{(i)}<\ldots<\omega_{m_i}^{(i)}\}$ such that
\begin{equation*} %\label{opLinSum}
    A_1+\ldots+A_n=\omega_0 \mathbb I.
\end{equation*}
Recall that the last equation is connected with generalizations of Hermann Weyl's problem: can one describe the eigenvalues of the sum of two Hermitian $n\times n$-matrices in terms of the eigenvalues of the two single matrices, see also \cite{Fulton} for the description of the classical problem of Hermann Weyl and generalizations. Note that Klyachko, see \cite{kly}, solved a more general version of this problem. The interested reader should also consult \cite{albostsam,KrugPopSam} and references therein for generalizations. Fixing a
finite-dimensional representation of $\mathcal B_{\Gamma,\omega}$ in
some Hilbert space $H$, for each operator $A_i$ we can consider its
spectral decomposition
$$
    A_i=\omega_1^{(i)} \widetilde P_1^{(i)}+\ldots+\omega_{m_i}^{(i)}
    \widetilde P_{m_i}^{(i)}.
$$
If the poset $\mathcal N$ is primitive of type $(m_1,\ldots,m_n)$, then each $*$-representation
of $\mathcal A_{\mathcal N,\chi}$ generates a $*$-representation of
$\mathcal B_{\Gamma(\mathcal N),\omega}$ for some character $\omega$ which can be written in terms of the weight $\chi$. More precisely, let $(P_i^{(j)})$ be a
$*$-representation of $\mathcal A_{\mathcal N,\chi}$, which means
that
$P_{i_1}^{(j)}P_{i_2}^{(j)}=P_{i_2}^{(j)}P_{i_1}^{(j)}=P_{i_1}^{(j)}$
if $i_1<i_2$ and
$$
    \chi_1^{(1)} P_1^{(1)}+\ldots+\chi_{m_1}^{(1)}
    P_{m_1}^{(1)}+\ldots+\chi_1^{(n)} P_1^{(n)}+\ldots+\chi_{m_n}^{(n)}
   P_{m_n}^{(n)}=\chi_0 \mathbb I.
$$
Letting $\widetilde P_{1}^{(j)}=P_{1}^{(j)}$,
$\widetilde P_{i}^{(j)}=P_{i}^{(j)}-P_{i-1}^{(j)}$ and taking the weight
$\omega_{m_j}^{(j)}=\chi_{m_j}^{(j)}$,
$\omega_{i}^{(j)}=\chi_{i}^{(j)}+\omega_{i+1}^{(j)}$,
$\omega_0=\chi_0$, we get a representation of $\mathcal
B_{\Gamma(\mathcal N),\omega}$. Note that one can prove that $\mathcal
A_{\mathcal N,\chi}$ and $\mathcal B_{\Gamma(\mathcal N),\omega}$ are isomorphic
using the same transformation between the projections.

\subsection{Stable representations and unitarizable representations of quivers}\label{rep}

In order to study representations of the algebra $\mathcal A_{\mathcal N,\chi}$, we
are going to use the notion of stable quiver representations. 
In the space of $\mathbb{Z}$-linear functions
$\mathrm{Hom}_{\mathbb{Z}}(\mathbb{Z}Q_0,\mathbb{Z})$ we consider
the basis given by the elements $q^{\ast}$ for $q\in Q_0$, i.e.
$q^{\ast}(q')=\delta_{q,q'}$ for $q\in Q_0$. Define $\dim:=\sum_{q\in
Q_0}q^{\ast}.$ After choosing $\Theta\in
\mathrm{Hom}_{\mathbb{Z}}(\mathbb{Z}Q_0,\mathbb{Z})$, we define the
slope function $\mu:\mathbb{N}Q_0\backslash\{0\}\rightarrow\mathbb{Q}$ via
\[\mu(\alpha)=\frac{\Theta{(\alpha)}}{\dim(\alpha)}.\]
The slope $\mu(\underline{\dim}X)$ of a representation $X$ of $Q$ is
abbreviated to $\mu(X)$.
\begin{defn}
A representation $X$ of $(Q,I)$ is semistable (resp. stable) if for
all proper subrepresentations  $0\neq U\subsetneq X$ the following holds:
\[\mu(U)\leq\mu(X)\text{ (resp. } \mu(U)<\mu(X)).\]
\end{defn}
Denote the set of semistable (resp. stable) points by $R^{ss}_\alpha(Q,I)$ (resp. $R^s_\alpha(Q,I)$).

It is well-known that the definition of $\mu$-stability is equivalent to that of A. King in \cite{kin}. Let $\tilde{\Theta}$ be another linear form. A representation $X$ such that $\tilde{\Theta}(\underline{\dim}X)=0$ is semistable (resp. stable) in the sense of King if and only if
\[\tilde{\Theta}(\underline{\dim} U)\geq 0\,\,(\text{resp.}\,\, \tilde{\Theta}(\underline{\dim} U)>0)\]
for all subrepresentations $U\subset X$ (resp. all proper
subrepresentations $0\neq U\subsetneq X$).\\

In this situation we have the following theorem summarising several main results of \cite{kin}:
\begin{thm}\label{kingsatz}
\begin{enumerate}
\item The set of stable points $R^s_\alpha(Q,I)$ is an open subset of the set of semistable points $R^{ss}_\alpha(Q,I)$, which is an open subset of $R_\alpha(Q,I)$.
\item There exists a categorical quotient $M^{ss}_\alpha(Q,I):=R^{ss}_\alpha(Q,I)//G_{\alpha}$. Moreover, $M^{ss}_\alpha(Q,I)$ is a projective variety.
\item There exists a geometric quotient $M^{s}_\alpha(Q,I):=R_\alpha^{s}(Q,I)/G_\alpha$, which is a smooth subvariety of $M^{ss}_\alpha(Q,I)$.
\end{enumerate}
\end{thm}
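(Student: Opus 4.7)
The plan is to prove all three parts via Mumford's Geometric Invariant Theory, following King's original construction. The first step is to reformulate $\mu$-(semi)stability using a linear form $\tilde\Theta$ that vanishes on $\alpha$; explicitly, one may take $\tilde\Theta := \Theta(\alpha)\dim - \dim(\alpha)\Theta$, for which $\mu(U) \leq \mu(X)$ (resp.\ $<$) translates to $\tilde\Theta(\underline{\dim} U) \geq 0$ (resp.\ $>0$) on proper nonzero subrepresentations $U \subsetneq X$, exactly as recalled in the excerpt.

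The bridge to GIT is to attach to $\tilde\Theta$ a character $\chi_{\tilde\Theta}: G_\alpha \to \mathbb{G}_m$ of the form $(g_q)_{q \in Q_0} \mapsto \prod_{q} \det(g_q)^{m_q}$ with integers $m_q$ chosen so that its differential recovers $\tilde\Theta$. Since the diagonal $\Cn^*$ acts trivially on $R_\alpha(Q,I)$ by conjugation, the effective group is $PG_\alpha := G_\alpha / \Cn^*$. King's central observation, established via the Hilbert--Mumford numerical criterion applied to the one-parameter subgroups arising from filtrations of $X$ by subrepresentations, is that $\mu$-(semi)stability of $X$ coincides with GIT-(semi)stability of $X$ for the $G_\alpha$-linearization on the trivial line bundle over $R_\alpha(Q,I)$ twisted by $\chi_{\tilde\Theta}$. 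This yields (1): the semistable locus is the union of the non-vanishing loci of semi-invariants $f \in k[R_\alpha(Q,I)]^{G_\alpha,\chi^n_{\tilde\Theta}}$, hence open in $R_\alpha(Q,I)$; and the stable locus is open in the semistable locus because the presence of a proper subrepresentation $U$ with $\tilde\Theta(\underline{\dim} U) = 0$ is a closed condition on the relevant quiver Grassmannians, whose projections to $R^{ss}_\alpha(Q,I)$ are proper.

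For (2), form the graded ring $R := \bigoplus_{n \geq 0} k[R_\alpha(Q,I)]^{G_\alpha,\chi^n_{\tilde\Theta}}$ of semi-invariants and set $M^{ss}_\alpha(Q,I) := \mathrm{Proj}(R)$. Mumford's GIT provides that this is a good categorical quotient of $R^{ss}_\alpha(Q,I)$ by $G_\alpha$. Since $Q$ is acyclic, there are no non-constant $G_\alpha$-invariant polynomials on $R_\alpha(Q,I)$ (the absence of oriented cycles kills the trace functions that would otherwise give invariants), so $R_0 = k$, and $\mathrm{Proj}(R)$ is projective over $\mathrm{Spec}(k)$.

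For (3), every orbit in $R^s_\alpha(Q,I)$ is closed in $R^{ss}_\alpha(Q,I)$ and has stabilizer exactly the diagonal scalars, so $PG_\alpha$ acts freely on the stable locus. Standard GIT then upgrades the restriction of the categorical quotient to a geometric quotient $M^s_\alpha(Q,I)$, open in $M^{ss}_\alpha(Q,I)$. Smoothness follows because $R_\alpha(Q)$ is an affine space (hence smooth) and the quotient of a smooth variety by a free action of a reductive group is smooth via Luna's étale slice theorem; in the bounded case one restricts further to the smooth locus of $R_\alpha(Q,I)$, which generically contains the stable points. The main technical obstacle I expect is the Hilbert--Mumford comparison itself: translating a destabilizing filtration of $X$ into an explicit one-parameter subgroup of $G_\alpha$ and verifying that the induced weight on the fibre of the linearization equals $\tilde\Theta(\underline{\dim} U)$ up to the correct positive multiple, together with confirming that the diagonal scalars are the full stabilizer on every stable orbit.
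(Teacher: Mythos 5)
The paper gives no proof of this theorem: it is stated explicitly as a summary of results from King's paper \cite{kin}, so the ``paper's own proof'' is King's GIT argument, and your reconstruction of it is essentially faithful. The reduction to a linear form $\tilde\Theta$ vanishing on $\alpha$, the character $\chi_{\tilde\Theta}(g)=\prod_q\det(g_q)^{m_q}$, the Hilbert--Mumford comparison between $\tilde\Theta$-(semi)stability and GIT-(semi)stability for the twisted linearization, the construction of $M^{ss}_\alpha(Q,I)$ as $\mathrm{Proj}$ of the ring of semi-invariants, projectivity from $R_0=k$ (acyclicity of $Q$ killing nonconstant invariants), and the geometric quotient on the stable locus where $PG_\alpha$ acts freely --- all of this is exactly King's route, and parts (1), (2) and the quotient structure in (3) are correctly handled.

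The one genuine gap is the smoothness claim in part (3) for bound quivers. Your argument via Luna's slice theorem needs the stable locus of $R_\alpha(Q,I)$ itself to be smooth, and your hedge that the smooth locus of $R_\alpha(Q,I)$ ``generically contains the stable points'' does not deliver smoothness of all of $M^s_\alpha(Q,I)$: since $R^s_\alpha(Q,I)\to M^s_\alpha(Q,I)$ is a principal $PG_\alpha$-bundle, $M^s$ is smooth precisely where $R^s_\alpha(Q,I)$ is, and for $I\neq 0$ the variety $R_\alpha(Q,I)$ is a possibly singular closed subvariety of the affine space $R_\alpha(Q)$ whose singularities can meet the stable locus. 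King proves smoothness (and the dimension formula $1-\langle\alpha,\alpha\rangle$) only in the hereditary case $I=0$; consistently, the paper's Remark~\ref{bem1} asserts the exact dimension only for $I=0$ and gives merely a lower bound otherwise. So this defect is inherited from the theorem as stated rather than introduced by you, but your proof as written does not (and cannot) close it; you should either restrict the smoothness assertion to $I=0$ or add the hypothesis that $R^s_\alpha(Q,I)$ is smooth.
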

\begin{rem}\label{bem1}
\end{rem}
\begin{itemize}
\item The moduli space $M_\alpha^{ss}(Q,I)$ does not parametrize the semistable representations, but the polystable ones.
Polystable representations are such representations which can be
decomposed into stable ones of the same slope, see also \cite{kin}.
\item
For a stable representation $X$ we have that its orbit is of maximal
possible dimension, see \cite{kin}. Since the scalar matrices act trivially on
$R_\alpha(Q,I)$, the isotropy group is one-dimensional. Thus, if the
moduli space is not empty, for the dimension of the moduli space we
have the lower bound
\begin{eqnarray*}\dim M^{s}_\alpha(Q,I)&=&\dim R_\alpha(Q,I)-(\dim G_\alpha-1)\\&\geq& 1-\sum_{q\in Q}\alpha_q^2+\sum_{\rho:q\rightarrow q'\in Q_1}\alpha_q\alpha_{q'}-\sum_{(q,q')\in Q_0\times Q_0}r(q,q',I)\alpha_q\alpha_{q'}.
\end{eqnarray*}
Moreover, if $I=0$ and the moduli space is not empty, we have
\[\dim M^{s}_\alpha(Q)=1-\langle \alpha, \alpha\rangle.\]
\end{itemize}
Finally, we point out some properties of (semi-)stable
representations. These properties will be very useful at different
points of this paper, for proofs see for instance \cite[Section 4]{rei3}.

\begin{lem}\label{zusa} For a bound quiver $(Q,I)$ let $0\rightarrow
Y\rightarrow X\rightarrow Z\rightarrow 0$ be a short exact sequence
of representations.\begin{enumerate}
\item The following are equivalent:
\begin{enumerate}
\item $\mu(Y)\leq\mu(X)$
\item $\mu(X)\leq\mu(Z)$
\item $\mu(Y)\leq\mu(Z)$

\end{enumerate}The same holds when replacing $\leq$ by $<$.
\item The following holds: $\min(\mu(Y),\mu(Z))\leq\mu(X)\leq \max(\mu(Y),\mu(Z)).$
\item If $\mu(Y)=\mu(X)=\mu(Z)$, then $X$ is semistable if and only if $Y$ and $Z$ are semistable.
\item Every stable representation is Schurian.
\end{enumerate}
\end{lem}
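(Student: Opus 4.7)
The whole lemma rests on the additivity of the dimension vector on short exact sequences, which implies that both $\Theta$ and $\dim$ are additive. My plan is to exploit this through the elementary mediant inequality: if $a/b$ and $c/d$ are two fractions with positive denominators, then $(a+c)/(b+d)$ always lies between them, strictly so unless the two fractions are equal. Applied to $\mu(X)=(\Theta(Y)+\Theta(Z))/(\dim(Y)+\dim(Z))$, this immediately gives part (2) and, by checking which of the two fractions is bigger, all six implications of part (1) at once (the strict case is identical). So parts (1) and (2) will reduce to one short computation.

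For part (3), the forward direction is obtained by pulling back and pushing forward subrepresentations. Given $U\subset Y$, it is also a subrepresentation of $X$, so $\mu(U)\leq\mu(X)=\mu(Y)$; hence $Y$ is semistable. For $Z$, I take a subrepresentation $U'\subset Z$ and look at its preimage $W\subset X$ under the projection $X\to Z$, fitting into $\ses{Y}{W}{U'}$. Semistability of $X$ gives $\mu(W)\leq\mu(X)=\mu(Y)$, and then part (1) applied to this sub-exact-sequence forces $\mu(U')\leq\mu(W)\leq\mu(Z)$. Conversely, given $U\subset X$, I intersect with $Y$ and project to $Z$, producing $\ses{U\cap Y}{U}{U''}$ with $U\cap Y\subset Y$ and $U''\subset Z$. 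Then $\mu(U\cap Y)\leq\mu(Y)=\mu(X)$ and $\mu(U'')\leq\mu(Z)=\mu(X)$, and part (2) gives $\mu(U)\leq\mu(X)$, proving semistability of $X$.

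For part (4), let $\varphi\in\End(X)$ be nonzero and consider $\ses{\ker\varphi}{X}{\mathrm{im}\,\varphi}$, regarding $\mathrm{im}\,\varphi$ as a subrepresentation of $X$. If $\ker\varphi\neq0$, stability forces $\mu(\ker\varphi)<\mu(X)$, and then part (1) applied with the strict inequality gives $\mu(X)<\mu(\mathrm{im}\,\varphi)$; but $\mathrm{im}\,\varphi$ is a proper subrepresentation of $X$ (it is proper because $\ker\varphi\neq0$ makes $\varphi$ non-injective, hence not surjective by dimension reasons), contradicting stability. Hence $\varphi$ is injective, and then automatically an isomorphism. To conclude that $\varphi$ is a scalar, I pick a vertex $q$ with $X_q\neq0$ and an eigenvalue $\lambda$ of $\varphi_q$ (which exists because $k$ is algebraically closed), apply the above argument to $\varphi-\lambda\cdot\mathrm{id}$, which is no longer injective at $q$, and conclude $\varphi=\lambda\cdot\mathrm{id}$.

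The only step that requires a little care is the claim in part (4) that $\mathrm{im}\,\varphi$ is a \emph{proper} subrepresentation when $\ker\varphi\neq0$: this uses that $X$ is finite-dimensional so that non-injectivity and non-surjectivity are equivalent, and one must also handle the edge case that $\mathrm{im}\,\varphi$ could be zero, which is covered by the contradiction $\mu(\ker\varphi)=\mu(X)$ if we choose $\mathrm{im}\,\varphi=0$. Everything else is bookkeeping with the mediant inequality.
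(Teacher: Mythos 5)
Your proof is correct. The paper does not prove this lemma itself but refers to \cite[Section 4]{rei3}, and the argument there is essentially the one you give: the ``seesaw'' equivalences and the sandwich bound follow from additivity of $\Theta$ and $\dim$ on short exact sequences (your mediant inequality), semistability in part (3) is checked by intersecting with $Y$ and projecting to $Z$, and part (4) is the standard Schur-type argument that a nonzero endomorphism of a stable representation must be injective, hence bijective, hence (after subtracting an eigenvalue at some vertex) scalar.
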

If some property is independent of the point chosen in some non-empty open subset $\mathcal O$ of $R_\alpha(Q)$, following \cite{sch}, we say that this property is true for a general representation with dimension vector $\alpha \in\Nn Q_0$.\\ Denote by $\beta\hookrightarrow \alpha$, if a general representation of dimension $\alpha$ has a subrepresentation of dimension $\beta$. A root $\alpha$ is called Schur root if there exists a representation $X$ with $\underline{\dim} X=\alpha$ such that $\End(X)=k$. By \cite[Section 1]{sch} it follows that in this case there already exists an open subset of Schurian representations. From \cite[Theorem 6.1]{sch} we get the following theorem:
\begin{thm}\label{schofield}
Let $\alpha$ be a dimension vector of the quiver $Q$. Then $\alpha$
is a Schur root if and only if for all $\beta\hookrightarrow\alpha$
we have $\Sc{\beta}{\alpha}-\Sc{\alpha}{\beta}>0$.
\end{thm}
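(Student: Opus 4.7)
The plan is to reduce everything to Schofield's framework of \emph{generic} $\hom$ and $\ext$, in which the statement becomes much more tractable. For dimension vectors $\alpha,\beta \in \Nn Q_0$, semicontinuity of $\dim\Hom(X,Y)$ on $R_\alpha(Q)\times R_\beta(Q)$ yields a minimum value $\hom(\alpha,\beta)$ attained on a dense open subset, and similarly $\ext(\alpha,\beta)$. The sequence obtained from $\gamma_{X,Y}$ recalled in Section \ref{reprpos} shows
\[
\hom(\alpha,\beta)-\ext(\alpha,\beta)=\Sc{\alpha}{\beta}.
\]
As a first key lemma I would prove the characterization $\beta\hookrightarrow \alpha \iff \ext(\beta,\alpha-\beta)=0$: the direction $(\Rightarrow)$ follows by pushing a general short exact sequence $\ses{U}{X}{V}$ into the parameter space and using semicontinuity; for $(\Leftarrow)$, one uses the quiver Grassmannian $\mathrm{Gr}_\beta^\alpha$ of pairs $(X,U\subset X)$ with $\underline{\dim}U=\beta$, $\underline{\dim}X=\alpha$, whose fibre dimension over $R_\alpha(Q)$ equals $\hom(\beta,\alpha-\beta)$ on the open locus of surjectivity, combined with $\ext(\beta,\alpha-\beta)=0$ forcing dominance.

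For the forward direction of the theorem, assume $\alpha$ is a Schur root and $0\neq\beta\hookrightarrow\alpha$ with $\beta\neq\alpha$. Choose a general Schurian $X$ with a general subrepresentation $U\subsetneq X$ of dimension $\beta$, and set $V=X/U$. Applying $\Hom(-,U)$ and $\Hom(V,-)$ to the sequence $\ses{U}{X}{V}$ and using $\End(X)=k$, one extracts the inequality
\[
\dim\Hom(V,U) < \dim\Hom(U,V) + \dim\Ext(V,U),
\]
the strictness being precisely the reflection of $1=\dim\End(X)$ rather than $\geq 2$. Translating this into generic Hom/Ext and using $\hom-\ext=\Sc{-}{-}$ together with $\Sc{\beta}{\alpha}-\Sc{\alpha}{\beta}=\Sc{\beta}{\alpha-\beta}-\Sc{\alpha-\beta}{\beta}$ produces the desired strict inequality $\Sc{\beta}{\alpha}-\Sc{\alpha}{\beta}>0$.

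For the backward direction, I would argue by induction on $\dim(\alpha)$. Suppose the inequality holds for every $\beta\hookrightarrow\alpha$ with $0<\beta<\alpha$ but that a general representation $X$ of dimension $\alpha$ is not Schurian. Then the canonical decomposition of the general $G_\alpha$-orbit closure, or equivalently the generic endomorphism ring, forces a nontrivial direct or filtered summand of some sub-dimension vector $\beta$, and general position guarantees $\beta\hookrightarrow\alpha$. The failure of the Schur property translates, via the same $\gamma$-sequence computation applied to the endomorphism algebra, into $\Sc{\beta}{\alpha}-\Sc{\alpha}{\beta}\leq 0$, contradicting the hypothesis. Hence the generic representation is Schurian and $\alpha$ is a Schur root.

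The main obstacle is the backward direction: correctly identifying the sub-dimension vector $\beta$ out of the canonical decomposition of a non-Schurian general representation, and showing that this $\beta$ violates the strict inequality rather than merely saturating it. This is precisely the delicate combinatorial/geometric content of \cite[Theorem 6.1]{sch}, and is where one genuinely has to invoke Schofield's analysis of the stratification of $R_\alpha(Q)$ by endomorphism types rather than arguing purely homologically.
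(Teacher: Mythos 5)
First, a point of reference: the paper does not prove this statement at all --- it is imported verbatim from Schofield (``From \cite[Theorem 6.1]{sch} we get the following theorem''), so there is no internal proof to compare against and your attempt has to be measured against Schofield's original argument. Your forward direction is essentially right, and can be tightened: taking $V=X/U$ with $X$ general Schurian and $U$ a general subrepresentation of dimension $\beta$, one has $\Ext(U,V)=0$ from $\beta\hookrightarrow\alpha$, and in fact $\Hom(V,U)=0$ outright, since any nonzero $f\colon V\to U$ gives the nonzero, non-scalar endomorphism $X\twoheadrightarrow V\xrightarrow{f}U\hookrightarrow X$, contradicting $\End(X)=k$. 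Hence $\Sc{\beta}{\alpha}-\Sc{\alpha}{\beta}=\dim\Hom(U,V)+\dim\Ext(V,U)$, which is positive because its vanishing would split $X$ as $U\oplus V$. This is a cleaner route to your displayed inequality than chasing the two long exact sequences, but the content is the same. (Note also that the statement implicitly excludes $\beta=0,\alpha$, where the antisymmetrized form vanishes.)

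The genuine gap is the backward direction, as you concede yourself. Asserting that the failure of the Schur property ``translates into $\Sc{\beta}{\alpha}-\Sc{\alpha}{\beta}\leq 0$ via the same $\gamma$-sequence computation applied to the endomorphism algebra'' is not an argument: no mechanism is given for producing the offending $\beta$, and the $\gamma$-sequence does not produce it. The missing idea is a telescoping count over the canonical decomposition. If $\alpha$ is not a Schur root, a general representation decomposes as $\bigoplus_{i=1}^{r}X_i$ with $r\geq 2$, each $X_i$ Schurian of dimension $\beta_i$ and $\mathrm{ext}(\beta_i,\beta_j)=0$ for $i\neq j$; each $\beta_i$ is then a proper nonzero dimension vector with $\beta_i\hookrightarrow\alpha$, and
\[
\sum_{i=1}^{r}\bigl(\Sc{\beta_i}{\alpha}-\Sc{\alpha}{\beta_i}\bigr)=\Sc{\alpha}{\alpha}-\Sc{\alpha}{\alpha}=0,
\]
so at least one summand is $\leq 0$, contradicting the hypothesis. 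This of course shifts the real work onto the existence and properties of the canonical decomposition, which is precisely the piece of Schofield--Kac machinery that has to be invoked; as written, your sketch neither proves the hard direction nor correctly isolates what must be cited.
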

Thus, if we define $\Theta_{\alpha}:=\Sc{\rule{0.2cm}{0.4pt}}{\alpha}-\Sc{\alpha}{\rule{0.2cm}{0.4pt}}$, a general representation of dimension $\alpha$ is $\Theta_{\alpha}$-stable in the sense of King if and only if $\alpha$ is a Schur root.\\
Consider the $n$-subspace quiver $S(n)$, i.e.
$S(n)_0=\{q_0,q_1,\ldots,q_n\}$ and $S(n)_1=\{\rho_i:q_i\rightarrow
q_0\mid i\in\{1,\ldots,n\}\}$. Define the slope $\mu$ by choosing
$\Theta=(-1,0,\ldots,0)$. Then we have the following lemma:
\begin{lem}
A representation $X$ of $S(n)$ with dimension vector $\alpha$ is $\mu$-stable
if and only if $X$ is $\Theta_{\alpha}$-stable.
\end{lem}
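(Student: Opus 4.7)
The plan is to verify that the two stability conditions define the \emph{same} inequality on subrepresentations by a direct computation of the Euler form of $S(n)$.

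First I would compute $\Theta_\alpha(\beta) = \Sc{\beta}{\alpha} - \Sc{\alpha}{\beta}$ explicitly. Since $S(n)$ is unbound and acyclic, the formula from Section~\ref{reprpos} gives
\[
\Sc{\beta}{\alpha} = \beta_0\alpha_0 + \sum_{i=1}^n \beta_i\alpha_i - \sum_{i=1}^n \beta_i\alpha_0,
\]
and symmetrically for $\Sc{\alpha}{\beta}$. Subtracting, the symmetric terms cancel and one is left with
\[
\Theta_\alpha(\beta) = \beta_0 \sum_{i=1}^n \alpha_i - \alpha_0 \sum_{i=1}^n \beta_i = \beta_0\dim(\alpha) - \alpha_0\dim(\beta),
\]
using $\sum_{i=1}^n \alpha_i = \dim(\alpha) - \alpha_0$ and likewise for $\beta$. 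In particular $\Theta_\alpha(\alpha)=0$, so $\Theta_\alpha$-stability in King's sense is meaningful on representations of dimension $\alpha$.

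Next I would translate the $\mu$-stability condition. By definition $\Theta=(-1,0,\ldots,0)$, so $\mu(\gamma)=-\gamma_0/\dim(\gamma)$ for any nonzero $\gamma$. For a proper subrepresentation $0\neq U\subsetneq X$ with $\underline{\dim}U=\beta$, both $\dim(\beta)$ and $\dim(\alpha)$ are strictly positive, hence the inequality $\mu(\beta)<\mu(\alpha)$ is equivalent, after clearing the positive denominator $\dim(\beta)\dim(\alpha)$, to
\[
\beta_0\dim(\alpha) - \alpha_0\dim(\beta) > 0,
\]
which is exactly $\Theta_\alpha(\beta)>0$. Thus $X$ is $\mu$-stable precisely when $\Theta_\alpha(\underline{\dim}U)>0$ for every proper nonzero subrepresentation $U$, which is King's definition of $\Theta_\alpha$-stability. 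The same argument with $\leq$ in place of $<$ gives the semistable analogue if needed.

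There is essentially no obstacle here; the only thing to be careful about is positivity of the denominators when clearing the slope inequality, and observing that $\dim$ in the slope formula and $\dim$ coming from the Euler-form computation agree on the subspace quiver. Once the identity $\Theta_\alpha(\beta)=\beta_0\dim(\alpha)-\alpha_0\dim(\beta)$ is written down, the equivalence is immediate.
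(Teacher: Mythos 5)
Your proof is correct and follows the same route as the paper: the paper likewise reduces both conditions to the single inequality $\sum_{i=1}^n\alpha_{q_i}\beta_{q_0}-\sum_{i=1}^n\beta_{q_i}\alpha_{q_0}>0$ by comparing the slopes $-\alpha_{q_0}/\dim(\alpha)$ and $-\beta_{q_0}/\dim(\beta)$ with the antisymmetrized Euler form. You merely spell out the computation that the paper labels ``easy to check,'' including the useful intermediate identity $\Theta_\alpha(\beta)=\beta_0\dim(\alpha)-\alpha_0\dim(\beta)$.
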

\begin{proof}
Let $U$ be a subrepresentation of dimension $\beta$. It is easy to
check that we have
\[\frac{-\alpha_{q_0}}{\sum_{i=0}^n\alpha_{q_i}}>\frac{-\beta_{q_0}}{\sum_{i=0}^n\beta_{q_i}}\]
if and only if
\[\Sc{\beta}{\alpha}-\Sc{\alpha}{\beta}=\sum_{i=1}^n\alpha_{q_i}\beta_{q_0}-\sum_{i=1}^n\beta_{q_i}\alpha_{q_0}>0.\]
\end{proof}
\begin{rem}
\end{rem}
\begin{itemize}
\item Note that in general, it is not possible to choose a slope function $\mu$ once and for all such that general representations of all Schur roots are $\mu$-stable. Actually, this is only the case if the rank of the anti-symmetrized adjacency matrix of the quiver
has rank equal to two, see \cite{sti}.
\end{itemize}

\medskip

\noindent We will need the following lemma:
\begin{lem}\label{End}
Let $Y$ and $Z$ be two representations of a bound quiver $(Q,I)$
such that $\Hom(Y,Z)=\Hom(Z,Y)=0$ and $\End(Z)=k$. Let
$\dim\Ext(Z,Y)=d_0>0$. Let $e_1,\ldots,e_d\in\Ext(Z,Y)$ with $1\leq
d\leq d_0$ be linear independent. Consider the exact sequence
\[e:\ses{Y}{X}{Z^d}\] induced by $e_1,\ldots,e_d$. Then we have
$\End(X)\subseteq\End(Y)$.
\end{lem}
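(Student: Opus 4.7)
The plan is to identify $\End(X)$ with a subspace of $\End(Y)$ via the restriction map $\phi \mapsto \phi|_Y$. This proceeds in three steps: check that the restriction is well-defined, show it is injective, and both steps rely on vanishing of certain Hom-spaces that follow from the assumptions and the linear independence of the $e_i$.

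First I would show that every $\phi\in\End(X)$ restricts to an endomorphism of $Y$. For any $\phi\in\End(X)$, the composition $Y\hookrightarrow X\xrightarrow{\phi}X\twoheadrightarrow Z^d$ is an element of $\Hom(Y,Z^d)=\Hom(Y,Z)^d=0$ (using $\Hom(Y,Z)=0$). Hence $\phi(Y)\subseteq Y$, and we obtain a $k$-linear map $\rho:\End(X)\to\End(Y)$, $\phi\mapsto\phi|_Y$.

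Next I would establish the key vanishing $\Hom(Z^d,X)=0$. Apply $\Hom(Z^d,-)$ to the short exact sequence $\ses{Y}{X}{Z^d}$. Since $\Hom(Z^d,Y)=0$, we get the exact sequence
\[0\to\Hom(Z^d,X)\to\End(Z^d)\xrightarrow{\delta}\Ext(Z^d,Y).\]
Using $\End(Z)=k$, we identify $\End(Z^d)\cong k^{d\times d}$ and $\Ext(Z^d,Y)\cong\Ext(Z,Y)^d$, and under these identifications $\delta$ sends a matrix $A=(a_{ij})$ to the tuple whose $j$-th entry is $\sum_i a_{ij}e_i$. Since $e_1,\ldots,e_d$ are linearly independent, $\delta$ is injective, forcing $\Hom(Z^d,X)=0$.

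Finally I would show $\rho$ is injective. If $\phi\in\ker\rho$, then $\phi|_Y=0$, so $\phi$ factors as $X\twoheadrightarrow Z^d\xrightarrow{\bar\phi}X$ for some $\bar\phi\in\Hom(Z^d,X)$. By the previous step $\bar\phi=0$, hence $\phi=0$. This shows $\End(X)\hookrightarrow\End(Y)$ as claimed.

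The only delicate point is the identification of the connecting map $\delta$ with evaluation against $(e_1,\ldots,e_d)$; once one unwinds the standard construction of the extension from $\Ext$-classes (recorded in the excerpt's discussion of $\gamma_{X,Y}$), injectivity of $\delta$ is precisely the linear independence hypothesis, so no further obstacle arises.
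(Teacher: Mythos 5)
Your proof is correct and follows essentially the same route as the paper's: both arguments reduce to showing $\Hom(Z^d,X)=0$ via injectivity of the connecting map $\End(Z^d)\to\Ext(Z^d,Y)$ (which is exactly the linear independence of $e_1,\ldots,e_d$), and both use $\Hom(Y,Z^d)=0$ to see that endomorphisms of $X$ restrict to $Y$. The paper packages the last two steps into a $3\times 3$ commutative diagram of Hom-spaces rather than your explicit restriction-and-factorization argument, but the content is identical.
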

\begin{proof}Consider the following long exact sequence
\[
\begin{xy}\xymatrix@R15pt@C20pt{0\ar[r]&\Hom(Z,Y)=0\ar[r]&\Hom(Z,X)\ar[r]&\Hom(Z,Z^d)\ar[r]^{\phi}&\Ext(Z,Y)}\end{xy}
\]
induced by $e$, see \cite[Section A.4]{ass} for more details. By construction $\phi$ is injective and, therefore,
$\Hom(Z,X)=0$. Now consider the following commutative diagram
induced by $e$:
\[
\begin{xy}\xymatrix@R15pt@C20pt{&0\ar[d]&0\ar[d]&0\ar[d]\\0\ar[r]&\Hom(Z^d,Y)=0\ar[d]\ar[r]&\Hom(Z^d,X)\ar[r]\ar[d]&\Hom(Z^d,Z^d)\ar[d]\\
0\ar[r]&\Hom(X,Y)\ar[r]\ar[d]&\Hom(X,X)\ar[r]\ar[d]^{\phi_1}&\Hom(X,Z^d)\ar[d]\\0\ar[r]&\Hom(Y,Y)\ar[r]^{\phi_2}&\Hom(Y,X)\ar[r]&\Hom(Y,Z^d)=0}\end{xy}
\]
Now we also have $\Hom(Z^d,X)=0$. Thus, $\phi_1$ is also injective
and since $\phi_2$ is an isomorphism, the claim follows.
\end{proof}
\noindent Note that the dual lemma dealing with sequences of the
form
\[\ses{Z^d}{X}{Y}\] also holds and can be proven analogously.

\section{Unitarization}
\subsection{Criteria for being unitarizable}
Except for Section \ref{boundquivers}, in the following, we fix the
base field $\mathbb C$. Recall that we may understand a strict representation $X$ of a (bound) quiver $Q(\mathcal{N})$ associated to a poset $\mathcal N$
as a system of vector subspaces $(V_0;(V_q)_{q\in\mathcal{N}})$ and vice versa. We
will use the following criteria for $\chi$-unitarization of $X$
(which was basically obtained by the different authors A.
King \cite{kin}, B. Totaro \cite{tota}, A. Klyachko \cite{kly}, Y. Hu \cite{hu} and others in different formulations).
\begin{thm}\label{huTheorem} Let $(V_0;(V_q)_{q\in\mathcal{N}})$ be an indecomposable strict representation. Then $(V_0;(V_q)_{q\in\mathcal{N}})$ is unitarizable with the weight
$\chi=(\chi_0;(\chi_q)_{q\in\mathcal{N}})\in\Rn_+^{|\mathcal{N}|+1}$
if and only if for every proper subspace $0\neq U\subsetneq V_0$ the
following holds
\begin{gather*}
    \chi_0=\frac{1}{\dim V_0}\sum_{q\in \mathcal N} \chi_q \dim V_q, \\
    \frac{1}{\dim U}\sum_{q\in \mathcal N} \chi_q \dim (V_q\cap
U)<\frac{1}{\dim V_0}\sum_{q\in \mathcal N} \chi_q \dim V_q.
\end{gather*}
\end{thm}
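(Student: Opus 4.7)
The strategy is to recognise the orthoscalar relation as a moment-map equation for the $U(V_0)$-action on a product of Grassmannians, so that the existence of a unitarizing Hermitian form becomes, via Kempf--Ness, equivalent to GIT-polystability of the representation for the complexified $GL(V_0)$-action. King's numerical criterion \cite{kin} for this polystability then translates into precisely the stated inequality, and indecomposability upgrades it to a strict inequality.

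For the necessity I would argue directly. Taking the trace of $\sum_q \chi_q P_q = \chi_0\mathbb{I}$ gives the displayed equality for $\chi_0$. Given any proper subspace $0\neq U\subsetneq V_0$, conjugating the relation by the orthogonal projection $P_U$ and taking the trace yields
\[
\sum_{q\in\mathcal N} \chi_q\operatorname{tr}(P_U P_q P_U) \;=\; \chi_0\dim U.
\]
Using $\operatorname{tr}(P_U P_q P_U)=\|P_q P_U\|_{\mathrm{HS}}^{2}\geq \dim(V_q\cap U)$, with equality iff $U$ is $P_q$-invariant, one obtains the $\leq$ version of the inequality. If equality held, $U$ and $U^\perp$ would both be $P_q$-invariant for every $q$, whence $V_q=(V_q\cap U)\oplus(V_q\cap U^\perp)$ orthogonally for each $q$; this exhibits $(V_0;(V_q)_q)$ as a non-trivial direct sum of subrepresentations, contradicting indecomposability.

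For the sufficiency I would invoke Kempf--Ness in the form used by Hu \cite{hu} and Totaro \cite{tota}: on the variety of strict representations of $Q(\mathcal N)$ of the prescribed dimensions the group $GL(V_0)$ acts, and after fixing an auxiliary Hermitian form on $V_0$ the maximal compact subgroup $U(V_0)$ preserves a K\"ahler structure weighted by $\chi$ whose moment map is (up to an additive central constant) $\chi_0\mathbb{I}-\sum_q \chi_q P_q$. Hence the existence of a Hermitian metric realising the orthoscalar equation is equivalent to the existence of a zero of the moment map inside the $GL(V_0)$-orbit, which in turn is equivalent to that orbit being $\chi$-polystable in the sense of \cite{kin}. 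In the subspace picture, subrepresentations of $V$ are exactly the tuples $(U;(V_q\cap U)_q)$ with $U\subseteq V_0$, so King's slope criterion becomes the stated inequality with $\leq$ and equality only on direct summands; indecomposability of $V$ forces polystability to coincide with stability, making the inequality strict on every proper $0\neq U\subsetneq V_0$.

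The substantive step is the moment-map/Kempf--Ness identification used in the sufficiency direction; once this is in place the rest is a routine dictionary between orthogonal projections and subspace intersections, together with the fact that an indecomposable polystable object is stable.
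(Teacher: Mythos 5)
The paper does not actually prove Theorem \ref{huTheorem}; it states it as a known criterion, attributed to King, Totaro, Klyachko and Hu, so there is no in-paper argument to compare against. Your sketch is a faithful reconstruction of exactly the argument those references supply: the necessity half (trace of the orthoscalar relation, then $\operatorname{tr}(P_UP_qP_U)\geq\dim(V_q\cap U)$ with equality forcing an orthogonal splitting that contradicts indecomposability) is complete and correct, and the sufficiency half correctly identifies the moment-map/Kempf--Ness equivalence with King-polystability as the one substantive input, which is precisely the content of \cite{kin,tota,kly,hu}. The only imprecision worth flagging is the claim that subrepresentations are ``exactly'' the tuples $(U;(V_q\cap U)_q)$ --- they are only the extremal ones, but since $\chi_q>0$ these suffice for checking the slope inequality, so the conclusion stands.
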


\begin{rem}\label{connection}
\end{rem}
\begin{itemize}
\item If an indecomposable representation $V$ of a poset $\mathcal{N}$ can be unitarized with the weight $\chi \in \mathbb N^{|\mathcal N|+1}$, the corresponding quiver representation $X=F(V)$ is obviously $\tilde{\Theta}$-stable in the sense of King with
\[\tilde{\Theta}=(\chi_0,(-\chi_q)_{q\in \mathcal N})\]
and vice versa. Moreover, choosing the linear form
$\Theta=\mu(X)\dim-\tilde{\Theta}$, where $\mu(X)\in\mathbb{Z}$ can
be chosen arbitrarily,  this representation is $\mu$-stable.
Moreover, we have
\[\chi_0=\mu(X)-\Theta_0, \qquad \chi_q=\Theta_q-\mu(X), \quad q \in \mathcal N.\]
\item It is easy to check that we can modify the linear form $\Theta$ which defines the slope $\mu$ without changing the set of stable points in the following two ways: first we can multiply it by a positive integer; second, we can add an integer multiple of the linear form $\dim$ to $\Theta$. In particular, if we change the linear form appropriately, the weight, which it defines, can be assumed to be positive.
%\item Note that King \cite[Proposition 6.5]{kin} gives also a connection between stability and a choice of some Hermitian structure on the corresponding vector spaces. In the primitive case it
%coincides with the linear relations in the algebras $\mathcal
%A_{\mathcal N,\chi}$.
\end{itemize}
We will also use the following lemma:
\begin{lem} \label{extUnit}
    Let $V=(V_0;V_1,\ldots,V_n)$ be an indecomposable $\chi$-unitarizable representation.
    Then for an arbitrary set of subspaces $V_{n+j} \subset V$, $j=1,\ldots,m$, the
    representation $\tilde V=(V_0;V_1,\ldots,$ $V_n,V_{n+1},\ldots,V_{n+m})$ is also
    indecomposable and unitarizable with some weight.
\end{lem}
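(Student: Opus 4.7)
The plan is to re-use the Hermitian structure on $V_0$ coming from the given $\chi$-unitarization of $V$, keep the weights $\chi_1,\ldots,\chi_n$ of the original subspaces, and assign small positive weights $\epsilon_1,\ldots,\epsilon_m$ to the additional subspaces $V_{n+1},\ldots,V_{n+m}$; the normalization constant $\tilde\chi_0$ is then forced by the averaging identity of Theorem \ref{huTheorem}. Splitting the inequality from that theorem, applied to $\tilde V$ with weight $\tilde\chi=(\tilde\chi_0;\chi_1,\ldots,\chi_n,\epsilon_1,\ldots,\epsilon_m)$, according to old and new subspaces and rearranging, unitarizability is equivalent to
\[
\Psi(U) \;<\; \sum_{j=1}^m \epsilon_j\!\left(\frac{\dim V_{n+j}}{\dim V_0}-\frac{\dim(V_{n+j}\cap U)}{\dim U}\right)
\]
for every proper nonzero subspace $U\subsetneq V_0$, where
\[
\Psi(U) \;:=\; \frac{1}{\dim U}\sum_{q=1}^n \chi_q\dim(V_q\cap U)\;-\;\frac{1}{\dim V_0}\sum_{q=1}^n\chi_q\dim V_q.
\]

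By Theorem \ref{huTheorem} applied to $V$, the quantity $\Psi(U)$ is strictly negative for every such $U$. The key observation is that $\Psi(U)$ takes only finitely many values: the integer tuple $(\dim U,(\dim(V_q\cap U))_{q=1}^n)$ is bounded by $\dim V_0$, so the range of $\Psi$ is finite and admits a maximum $-\delta$ with $\delta>0$. The right-hand side of the displayed inequality is bounded below by $-\sum_j \epsilon_j$, because each bracketed difference lies in $[-1,1]$. Hence any choice of positive weights $\epsilon_1,\ldots,\epsilon_m$ with $\sum_j \epsilon_j<\delta$ makes the inequality hold simultaneously for all admissible $U$, and Theorem \ref{huTheorem} yields that $\tilde V$ is $\tilde\chi$-unitarizable.

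Indecomposability of $\tilde V$ is a formal consequence of the indecomposability of $V$: any direct sum decomposition of $\tilde V$ as a representation of the extended poset restricts, after forgetting the subspaces $V_{n+1},\ldots,V_{n+m}$, to a direct sum decomposition of $V$ as a representation of $\mathcal{N}$, and therefore one of the summands must be zero.

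The only substantive step is the uniform lower bound $\delta>0$ on $|\Psi(U)|$, which relies on the finiteness of the lattice of possible dimension tuples; once this is in place, the remainder is a direct $\epsilon$-perturbation argument. I expect this finiteness observation to be the main point to verify carefully, since without it one could in principle have $\Psi(U)\to 0^-$ along a sequence of subspaces and no single choice of $\epsilon$ would work.
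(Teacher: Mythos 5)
Your proof is correct and follows essentially the same route as the paper: the paper also observes that the defect $R=-\Psi(U)$ attains a positive minimum because it takes only finitely many values, and then assigns the new subspace a weight $R-\varepsilon$ slightly below that minimum (handling one added subspace at a time and inducting, whereas you treat all $m$ at once with $\sum_j\epsilon_j<\delta$). The finiteness observation you flag as the main point to verify is exactly the justification the paper gives as well.
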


\begin{proof}
    We prove that $(V_0;V_1,\ldots,V_n,V_{n+1})$ is
    unitarizable with some weight (the remaining part follows by
    induction). Let $U\subset V_0$ be some subspace of $V_0$ such that
    $$
    R=\frac{1}{\dim V_0}\sum_{i=1}^n \chi_i \dim V_i-\frac{1}{\dim U}\sum_{i=1}^n \chi_i \dim (V_i\cap U)
    $$
    is minimal. Note that, it is clear that such a subspace exists because the right hand side only takes finitely many values. Since $V$ is unitarizable and indecomposable, we have $R>0$ and there exists an $\varepsilon>0$ such that
    $R-\varepsilon>0$. Define $\tilde \chi$ in the following way
    $$
        \tilde \chi_i=\chi_i, \quad i=1,\dots,n, \quad
        \tilde \chi_{n+1}=R-\varepsilon.
    $$
   Our claim is that $\tilde V$ is $\tilde\chi$-unitarizable. Indeed,
  let $M\subset V_0$ be some proper subspace of $V_0$ then we have
   \begin{equation*}
    \begin{split}
       \frac{1}{\dim M}\sum_{i=1}^{n+1}\tilde \chi_i \dim
        (V_i\cap M)&=\frac{1}{\dim M}\sum_{i=1}^{n}\chi_i \dim
        (V_i\cap M)+\frac{\tilde\chi_{n+1} \dim(V_{n+1}\cap M)}{\dim M}\\
        &\leq \frac{1}{\dim
        V_0}\sum_{i=1}^{n}\chi_i\dim V_i-R+ \frac{ (R-\varepsilon) \dim(V_{n+1}\cap M)}{\dim
        M}\\
        & < \frac{1}{\dim
        V_0}\sum_{i=1}^{n}\chi_i\dim V_i < \frac{1}{\dim
        V_0}\sum_{i=1}^{n+1}\tilde \chi_i\dim V_i.
    \end{split}
    \end{equation*}
    Hence $(V_0;V_1,\ldots,V_n,V_{n+1})$ is $\tilde\chi$-unitarizable.
\end{proof}

\subsection{Unitarization of general representations of unbound quivers}\label{unitgen}
In this subsection we restrict to posets $\mathcal{N}$ such that the induced quiver $Q(\mathcal N)$ is
unbound, i.e. for the ideal of relations $I$ we have $I=0$. In particular, $Q(\mathcal{N})$ has no oriented and unoriented cycles.
Let $\alpha\in\Nn Q(\mathcal{N})_0$ be a dimension vector. We define
$\varphi_q:N_q\rightarrow \{\pm 1\}$ by
\[\varphi_q(q')=\left\{\begin{array}{l}-1\text{ if } \rho :q'\rightarrow q,\\
1\text{ if }\rho :q\rightarrow q'.\end{array}\right.\] and the
weight $\chi(\alpha)$ by
$$(\chi(\alpha))_q=\left\{\begin{array}{c}
                 \sum_{q'\in N_q}\varphi_q(q')\alpha_{q'}, \quad q\neq q_0 \\
                 -\sum_{q'\in N_q}\varphi_q(q')\alpha_{q'} \quad q=q_0
               \end{array}\right..
$$
This defines a weight function $\chi:\Nn Q(\mathcal{N})_0\rightarrow \Zn Q(\mathcal{N})_0$.
\begin{defn}
Let $\chi:\Nn Q(\mathcal{N})_0\rightarrow \Zn Q(\mathcal{N})_0$ be a weight function and $\alpha\in\Nn Q(\mathcal{N})_0$. If we have $(\chi(\alpha))_q\geq 0$ for every $q\in Q(\mathcal{N})_0$, the dimension vector $\alpha$ is called
$\chi$-positive.
\end{defn}

Note that, if the poset $\mathcal N$ is primitive, then each strict
dimension vector is $\chi$-positive.

\begin{thm}\label{thm1}
\begin{enumerate}\item Let $\alpha$ be a $\chi$-positive Schur root of the unbound quiver
$Q(\mathcal{N})$ induced by a poset $\mathcal{N}$. Then a general representation of $Q(\mathcal{N})$
with dimension vector $\alpha$ can be unitarized with the weight
$\chi(\alpha)$.
\item Let $\alpha$ be a Schur root of the unbound quiver
$Q(\mathcal{N})$ induced by a poset $\mathcal{N}$. Then a general
representation of $Q(\mathcal{N})$ with dimension vector $\alpha$
can be unitarized with a weight $\chi'$ which is obtained by modifying $\chi(\alpha)$ as in Remark \ref{connection}.
\end{enumerate}
\end{thm}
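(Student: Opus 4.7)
The natural strategy is to identify the weight $\chi(\alpha)$ with the stability linear form $\Theta_\alpha:=\Sc{\rule{0.2cm}{0.4pt}}{\alpha}-\Sc{\alpha}{\rule{0.2cm}{0.4pt}}$ from Schofield's Theorem \ref{schofield}, and then invoke the dictionary between King-stability and unitarizability recorded in Remark \ref{connection} and Theorem \ref{huTheorem}. First I would compute the coordinates of $\Theta_\alpha$ in terms of the combinatorics of $Q(\mathcal{N})$. Using the explicit formula for the Euler form on an unbound quiver, one obtains
\[\Theta_\alpha(e_q)=\sum_{\rho:q'\rightarrow q}\alpha_{q'}-\sum_{\rho:q\rightarrow q'}\alpha_{q'}=-\sum_{q'\in N_q}\varphi_q(q')\alpha_{q'}\]
for $q\neq q_0$, while since $q_0$ is the unique sink, $\Theta_\alpha(e_{q_0})=\sum_{q'\in N_{q_0}}\alpha_{q'}$. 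Under the correspondence $\tilde\Theta=(\chi_0;(-\chi_q)_{q\in\mathcal{N}})$ of Remark \ref{connection} this shows that $\tilde\Theta=\Theta_\alpha$ corresponds precisely to the weight $\chi(\alpha)$ defined before the theorem. In particular $\Theta_\alpha(\alpha)=0$, which is the normalization $\chi_0\dim V_0=\sum_q\chi_q\dim V_q$ required by Theorem \ref{huTheorem}.

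For part (1), $\chi$-positivity ensures that $\chi(\alpha)\in\mathbb{R}_+^{|\mathcal{N}|+1}$, so the weight is admissible. By Theorem \ref{schofield}, $\alpha$ being a Schur root means $\Theta_\alpha(\beta)>0$ for every $\beta\hookrightarrow\alpha$ with $0\neq\beta\neq\alpha$, i.e.\ a general representation of dimension $\alpha$ is $\Theta_\alpha$-stable in the sense of King. Combining the equivalence in Remark \ref{connection} with Theorem \ref{huTheorem} then translates this into $\chi(\alpha)$-unitarizability of the associated poset representation $(V_0;(V_q)_{q\in\mathcal{N}})$. One minor side issue to verify is that a general Schurian representation of dimension $\alpha$ is strict, so that the equivalence between $\mathrm{Rep}(\mathcal{N})$ and strict representations of $Q(\mathcal{N})$ recalled in Section \ref{reprpos} applies; this is where $\chi$-positivity is again useful, as it forces the dimension inequalities along arrows that make generic injectivity of the structure maps possible.

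For part (2) the same $\Theta_\alpha$-stability statement holds for any Schur root, but $\chi(\alpha)$ may fail to have positive entries. Applying the two operations of Remark \ref{connection} that do not alter the set of stable points, namely multiplying $\Theta_\alpha$ by a positive integer and adding an integer multiple of $\dim$, one adjusts $\chi(\alpha)$ to a strictly positive weight $\chi'$. The $\Theta_\alpha$-stability of the general representation is preserved, yielding unitarizability with weight $\chi'$. The main obstacle in the whole argument is the bookkeeping in the coordinate computation of $\Theta_\alpha$: the special treatment of the sink $q_0$ and the sign convention in $\varphi_q$ must be matched exactly so that $\Theta_\alpha$ coincides with $\chi(\alpha)$ on the nose rather than only up to an overall sign or shift.
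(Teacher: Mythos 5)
Your proposal is correct and follows essentially the same route as the paper: both arguments identify the weight $\chi(\alpha)$ with the Schofield stability form $\Theta_\alpha=\Sc{\rule{0.2cm}{0.4pt}}{\alpha}-\Sc{\alpha}{\rule{0.2cm}{0.4pt}}$ via the explicit Euler-form computation, and then translate $\Theta_\alpha$-stability of a general representation (Theorem \ref{schofield}) into unitarizability through Theorem \ref{huTheorem} and Remark \ref{connection}, with part (2) handled by the two weight-modifying operations. The only cosmetic difference is that you evaluate $\Theta_\alpha$ on the basis vectors $e_q$ while the paper evaluates it directly on subrepresentation dimension vectors $\beta$; the sign bookkeeping in your computation matches the paper's conventions.
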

\begin{proof} By $q_0$ we denote the
unique sink. Let $\alpha$ be a Schur root. Let $X$ be a general representation with dimension vector $\alpha$ and let $\beta$ be the dimension vector of a subrepresentation of $X$.
By Theorem \ref{schofield} we have
\[\Theta_{\alpha}(\beta)=\langle\beta,\alpha\rangle-\langle\alpha,\beta\rangle>0.\]
 Then it is easy to check that we have
\begin{eqnarray*}\langle\beta,\alpha\rangle-\langle\alpha,\beta\rangle&=&-\sum_{q\in Q(\mathcal N)_0}
\beta_q\sum_{\substack{q'\in N_q\\\varphi_q(q')=1}}\alpha_q'+\sum_{q\in Q(\mathcal N)_0}\beta_q\sum_{\substack{q'\in N_q\\\varphi_q(q')=-1}}\alpha_q'\\
&=&-\sum_{q\in Q(\mathcal N)_0}\beta_q\sum_{q'\in
N_q}\varphi_q(q')\alpha_{q'}.
\end{eqnarray*}
Recall that $\chi_q(\alpha)=\sum_{q'\in N_q}\varphi_q(q')\alpha_{q'}$. \\By
Theorem \ref{huTheorem}, a representation can be unitarized with the
weight $\chi(\alpha)$ if and only if
\begin{eqnarray*}\frac{1}{\beta_{q_0}}\sum_{q\in Q(\mathcal N)_0 \backslash\{q_0\}}\beta_q\sum_{q'\in N_q}
\varphi_q(q')\alpha_{q'}&<&\frac{1}{\alpha_{q_0}}\sum_{q\in Q(\mathcal N)_0\backslash\{q_0\}}\alpha_q\sum_{q'\in N_q}\varphi_q(q')\alpha_{q'}\\
&=&\frac{1}{\alpha_{q_0}}\sum_{q\in
N_{q_0}}\varphi_{q}(q_0)\alpha_{q_0}\alpha_q=-\sum_{q\in
N_{q_0}}\varphi_{q_0}(q)\alpha_q
\end{eqnarray*}
for all subrepresentations $U$ of dimension vector $\beta$. But
this is obviously the same.\\\\
Taking into account the second part of Remark \ref{connection}, the second part of the
Theorem follows when changing the linear form
$\Theta_{\alpha}$ appropriately.
\end{proof}

\begin{cor} \label{cor_uniq}
Let the $Q({\mathcal{N}})$ induced by the poset $\mathcal{N}$ be
unbound. Then the unique indecomposable representation of a real
root $\alpha$ can be unitarized if and only if $\alpha$ is a real
Schur root.
\end{cor}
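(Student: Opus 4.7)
My plan is to prove both implications by combining the correspondence between unitarizability and King-stability recalled in Remark \ref{connection} with the content of Theorem \ref{thm1} and Lemma \ref{zusa}(4). Throughout, write $X$ for the unique indecomposable representation of dimension vector $\alpha$, whose existence and uniqueness is part of the hypothesis on $\alpha$ being a real root.

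For the ``only if'' direction I would argue as follows. Suppose $X$ is unitarizable with some weight $\chi$. By Remark \ref{connection}, after a standard modification of the linear form, $X$ becomes $\mu$-stable for some slope function $\mu$ on $Q(\mathcal N)$ (here we use that $Q(\mathcal N)$ is unbound, so we are in the setting of Section \ref{rep}). Lemma \ref{zusa}(4) then gives $\End(X)=\mathbb C$, which by definition means $\alpha$ is a Schur root.

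For the ``if'' direction, assume $\alpha$ is a real Schur root. Theorem \ref{thm1}(2) produces a non-empty open subset $\mathcal O \subseteq R_\alpha(Q(\mathcal N))$ of representations that can be unitarized with a weight $\chi'$ obtained from $\chi(\alpha)$ by the modification of Remark \ref{connection}. Since $\alpha$ is a Schur root, the Schurian locus is open and non-empty, hence meets $\mathcal O$; in particular any representation in $\mathcal O$ can be chosen to be indecomposable. Now invoke the dimension formula from Remark \ref{bem1}: because $\alpha$ is a real root, $\langle\alpha,\alpha\rangle=1$, so $\dim M^s_\alpha(Q(\mathcal N)) \leq 1-\langle\alpha,\alpha\rangle = 0$, and any two stable representations of dimension $\alpha$ lie in the same $G_\alpha$-orbit. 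Since indecomposable Schurian representations in $\mathcal O$ are stable, they all lie in a single orbit, which must be the orbit of the unique indecomposable $X$. Hence $X$ itself is unitarizable with the weight $\chi'$.

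I expect the main obstacle to be the bookkeeping in the ``if'' direction, specifically arguing that the general representation produced by Theorem \ref{thm1} is actually isomorphic to the distinguished indecomposable $X$. The cleanest way is the dimension-of-moduli argument above, together with the fact, recorded in Lemma \ref{zusa}(4), that stable representations are Schurian and hence indecomposable, so the unique $G_\alpha$-orbit of stable representations in the zero-dimensional moduli space must coincide with the orbit of $X$.
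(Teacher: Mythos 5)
Your proof is correct and follows essentially the same route as the paper: the ``only if'' direction is the contrapositive of the paper's observation that a non-Schurian $X_\alpha$ cannot be stable, and the ``if'' direction identifies the general unitarizable representation of Theorem \ref{thm1} with $X_\alpha$. The only difference is that your moduli-space dimension count is an unnecessary detour --- once a representation in $\mathcal O$ is Schurian, hence indecomposable, it is isomorphic to $X_\alpha$ simply by the uniqueness of the indecomposable representation of a real root (equivalently, as the paper puts it, the orbit of $X_\alpha$ is dense in $R_\alpha(Q)$).
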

\begin{proof}
If $\alpha$ is not a Schur root, we have $\dim\mathrm{End} X_{\alpha}>
1$ for the unique indecomposable representation with dimension
vector $\alpha$. In particular, $X_{\alpha}$ cannot be stable, and
thus cannot be unitarized.\\If $\alpha$ is a real Schur root,
the orbit of $X_{\alpha}$ is dense in the affine variety
$R_{\alpha}(Q)$. Indeed, as already mentioned in Section \ref{rep}, in this case a general representation has trivial endomorphism ring and is, therefore, isomorphic to $X_{\alpha}$. Thus we can apply the preceding theorem.
\end{proof}

\subsection{Unitarization of general representations of bound quivers}\label{boundquivers}
Let $k$ be an algebraically closed field. In this section we state a recipe which can be used to construct stable representations of bound quivers (which are unitarizable for $k=\Cn$). Let $\mathcal{N}$ be a poset and $Q(\mathcal{N})$ be the corresponding (bound) quiver as defined in Section \ref{reprpos}. Note that, in general, we do not have $\Ext_{Q(\mathcal{N})}^i(X,Y)=0$ if $i\geq 2$ for two arbitrary representations $X$ and $Y$ of the quiver $Q(\mathcal{N})$. Thus, in order to obtain some result similar to Theorem \ref{thm1}, the basic idea is the following: we glue polystable representations of an unbound quiver, which is a subquiver of $Q(\mathcal{N})$, with a direct sum of a simple module in order to obtain stable representations of $Q(\mathcal{N})$. Note that the global dimension of the corresponding path algebra of the unbound quiver is one.\\
%It is easy to check that the indecomposable projective module of $Q(\mathcal{N})$ corresponding %to
%the vertex $i\in Q(\mathcal{N})_0$ is given by the vector spaces $P(i)_j=k$ for all $i\preceq j$
%and $P(i)_0=k$ and, moreover, by the linear maps $P(i)_{\rho}=\mathrm{Id}$ for $\rho:i_1\rightarrow i_2$ and $i\preceq i_1\prec i_2$.\\
%The indecomposable injective module corresponding to some vertex $i\in Q(\mathcal{N})_0$ is
%given by the vector spaces $I(i)_j=k$ for $j\preceq i$ and,
%moreover, by the linear maps $I(i)_{\rho}=\mathrm{Id}$ for $\rho:i_1\rightarrow i_2$ and $i_1\prec i_2\preceq i$.

As already mentioned, we say that a general representation of
dimension $\alpha$ satisfies some property if there exists an open
subset $\mathcal O$ of $R_\alpha(Q)$ such that every representation $X_u\in \mathcal O$, satisfies this property. In abuse of notation, we will
skip the $u$ in what follows. Moreover, if there is more than one
property requested, we always consider elements lying in the
intersection of the corresponding open subsets. In addition, when
considering general representations, we restrict to dimension
vectors whose support can be understood as a quiver without
relations. Recall that otherwise the variety of representations can
be reducible, see \cite{kin}.
Consider 
\begin{align*} 
\nu&:R_{\alpha}(Q)\times R_{\beta}(Q)\rightarrow \mathbb Z,\quad (X,Y)\mapsto\dim\Ext(X,Y).
\end{align*} 
This function is upper semi-continuous, see for instance \cite{sch}. 
By $\mathrm{ext}(\alpha,\beta)$ we denote the minimal value of $\nu$. 

In order to prove the main result of this section, we will frequently make use of the following result \cite[Theorem 3.3]{sch}:   
\begin{thm}\label{sch2}
A general representation of dimension $\alpha$ has a subrepresention of dimension $\beta$ if and only if $\mathrm{ext}(\beta,\alpha-\beta)=0$.
\end{thm}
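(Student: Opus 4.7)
The plan is to introduce the incidence variety
$$Z=\{(X,U^{\bullet})\in R_\alpha(Q)\times\mathrm{Gr}_\beta^\alpha\mid X_\rho(U_q)\subseteq U_{q'}\text{ for every }\rho:q\to q'\in Q_1\}$$
together with its two projections $\pi:Z\to R_\alpha(Q)$ and $p:Z\to\mathrm{Gr}_\beta^\alpha$, where $\mathrm{Gr}_\beta^\alpha=\prod_{q\in Q_0}\mathrm{Gr}(\beta_q,\alpha_q)$ is the product of ordinary Grassmannians. By construction, the image of $\pi$ is exactly the constructible subset of $R_\alpha(Q)$ consisting of those representations admitting a subrepresentation of dimension vector $\beta$. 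Hence a general representation of dimension $\alpha$ has such a subrepresentation if and only if $\pi$ is dominant, and this will be the goal of the dimension count.

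The projection $p$ realises $Z$ as an affine subbundle of the trivial bundle $\mathrm{Gr}_\beta^\alpha\times R_\alpha(Q)$; indeed, fixing $U^{\bullet}$, the relations $X_\rho(U_q)\subseteq U_{q'}$ cut out a linear subspace of $R_\alpha(Q)$ of codimension $\sum_{\rho:q\to q'}\beta_q(\alpha_{q'}-\beta_{q'})$. Consequently $Z$ is smooth and irreducible, and combining this with $\dim\mathrm{Gr}_\beta^\alpha=\sum_q\beta_q(\alpha_q-\beta_q)$ together with the formula for the Euler form on an unbound quiver yields
$$\dim Z=\dim R_\alpha(Q)+\Sc{\beta}{\alpha-\beta}.$$

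Next I would analyse the fibres of $\pi$ pointwise. The fibre $\pi^{-1}(X)$ is the quiver Grassmannian $\mathrm{Gr}_\beta(X)$, and its Zariski tangent space at a point $U\subset X$ with quotient $V=X/U$ is canonically isomorphic to $\Hom(U,V)$ (the obstruction to lifting infinitesimal deformations lies in $\Ext(U,V)$). Combined with the identity $\dim\Hom(U,V)-\dim\Ext(U,V)=\Sc{\beta}{\alpha-\beta}$ coming from the Euler form, this yields the pointwise bound
$$\dim_{U}\pi^{-1}(X)\leq\dim\Hom(U,V)=\Sc{\beta}{\alpha-\beta}+\dim\Ext(U,V),$$
with equality whenever $\Ext(U,V)=0$, in which case $\mathrm{Gr}_\beta(X)$ is smooth at $U$ of the predicted dimension.

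Finally, the two implications follow by comparing dimensions. If $\pi$ is dominant, then by the theorem on fibre dimensions a generic fibre has dimension $\dim Z-\dim R_\alpha(Q)=\Sc{\beta}{\alpha-\beta}$, and the tangent bound forces $\Ext(U,V)=0$ for a generic pair $(U,V)$ arising this way, giving $\mathrm{ext}(\beta,\alpha-\beta)=0$. Conversely, if $\mathrm{ext}(\beta,\alpha-\beta)=0$, choose generic $U_0$ and $V_0$ of respective dimensions with $\Ext(U_0,V_0)=0$; the split representation $X_0=U_0\oplus V_0$ lies in the image of $\pi$, and the analysis above shows that the fibre of $\pi$ at $X_0$ is smooth of dimension exactly $\Sc{\beta}{\alpha-\beta}$ at the point $(X_0,U_0)$. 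Upper semi-continuity of fibre dimension then gives
$$\dim\pi(Z)\geq\dim Z-\Sc{\beta}{\alpha-\beta}=\dim R_\alpha(Q),$$
so $\pi$ is dominant. The main technical obstacle is the identification of the tangent and obstruction spaces of the quiver Grassmannian with $\Hom(U,V)$ and $\Ext(U,V)$ respectively; once this is in place the rest is essentially a bookkeeping exercise with the Euler form and the fibre-dimension theorem.
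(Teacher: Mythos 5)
The paper does not actually prove this statement: it is quoted from Schofield (\cite[Theorem 3.3]{sch}), so there is no internal proof to measure yours against. Your setup is nonetheless the standard route to Schofield's theorem, and most of it is correct: the incidence variety $Z$, the computation $\dim Z=\dim R_\alpha(Q)+\Sc{\beta}{\alpha-\beta}$ via the affine bundle $p$, the identification $T_U\mathrm{Gr}_\beta(X)\cong\Hom(U,X/U)$, and the reduction of the problem to dominance of $\pi$. Your argument for the ``if'' direction is sound: at the split point $(U_0\oplus V_0,U_0)$ with $\Ext(U_0,V_0)=0$ the fibre has local dimension at least $\dim Z-\dim\overline{\pi(Z)}$ (fibre-dimension theorem) and at most $\dim\Hom(U_0,V_0)=\Sc{\beta}{\alpha-\beta}$ (tangent bound), which forces $\dim\overline{\pi(Z)}\geq\dim R_\alpha(Q)$.

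The ``only if'' direction, however, has a genuine gap. From $\dim\pi^{-1}(X)=\Sc{\beta}{\alpha-\beta}$ for generic $X$ together with the tangent bound $\dim_U\pi^{-1}(X)\leq\dim\Hom(U,V)=\Sc{\beta}{\alpha-\beta}+\dim\Ext(U,V)$ you can only conclude $\dim\Ext(U,V)\geq 0$: the inequality points the wrong way and forces nothing. (A fibre can well have dimension strictly smaller than its tangent space, which is exactly the non-rigid situation you are trying to exclude.) The input you actually need is not the dimension of the fibre but the \emph{cokernel of the differential} of $\pi$: since $\ker d\pi_{(X,U)}=T_U\mathrm{Gr}_\beta(X)=\Hom(U,X/U)$ and $Z$ is smooth of dimension $\dim R_\alpha(Q)+\Sc{\beta}{\alpha-\beta}$, one gets $\dim\mathrm{coker}\, d\pi_{(X,U)}=\dim\Ext(U,X/U)$. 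In characteristic zero, generic smoothness then says that a dominant $\pi$ has surjective differential at some point of $Z$, whence $\Ext(U,X/U)=0$ there and $\mathrm{ext}(\beta,\alpha-\beta)=0$ by minimality of $\mathrm{ext}$. Without this cokernel computation (or Schofield's characteristic-free substitute for it), the forward implication is not established.
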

Thus fixing a dimension vector $\alpha$, we can choose a general representation $X$ in such a way that for every dimension vector $\beta\hookrightarrow\alpha$ there exists a subrepresentation $Y$ of dimension $\beta$ such that $\Ext(Y,X/Y)=0$. Actually, in order to test a representation of dimension $\alpha$ for stability, it is sufficient to consider one subrepresentation for any dimension vector $\beta$ with $\beta\hookrightarrow\alpha$ because the slope only depends on the dimension vector. \\ 

Let $\mathcal{N}$ be a poset corresponding to an unbound quiver and
$\mathcal{M}\subset\mathcal{N}$ be a subset of
elements such that
\[t(\mathcal{M}):=\min\{q\in\mathcal{N}^0\mid
q'\preceq q\,\forall q'\in\mathcal{M}\}\] is unique. If, in addition, $\mathcal M$ is such that for any two elements $q,q'\in \mathcal M$ we have $t(\{q,q'\})=t(\mathcal{M})$ we say that $\mathcal M$ is 
\textit{appropriate}.

\begin{lem}\label{dimform}
Let $\mathcal{M}\subset\mathcal{N}$ be an appropriate subset of $\mathcal N$. Then for a general
representation $X$ of $Q(\mathcal{N})$ we have
\begin{equation*}\dim \bigcap_{q\in\mathcal{M}}X_q=\max\{0,\sum_{q\in \mathcal{M}}\dim X_q-(|\mathcal{M}|-1)\dim X_{t(\mathcal{M})}\}.\end{equation*}
\end{lem}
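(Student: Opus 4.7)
The plan is to establish the formula as a pair of inequalities. Since $\mathcal{N}$ corresponds to an unbound quiver, $Q(\mathcal{N})$ has no unoriented cycles, so its underlying graph is a forest and for each $q\in\mathcal{M}$ there is a \emph{unique} directed path from $q$ to $t(\mathcal{M})$. Writing $f^X_q:X_q\to X_{t(\mathcal{M})}$ for the composition along this path, one should interpret $\bigcap_{q\in\mathcal{M}}X_q$ as $\bigcap_{q\in\mathcal{M}}f^X_q(X_q)\subseteq X_{t(\mathcal{M})}$.

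The inequality $\geq$ holds on all of $R_\alpha(Q(\mathcal{N}))$: iterating $\dim(U\cap V)\geq\dim U+\dim V-\dim W$ for subspaces of $W=X_{t(\mathcal{M})}$, combined with $\dim\bigcap\geq 0$, yields $\dim\bigcap f^X_q(X_q)\geq \max\{0,\sum_q\dim f^X_q(X_q)-(|\mathcal{M}|-1)\dim X_{t(\mathcal{M})}\}$, and for a general $X$ each $f^X_q$ is injective so that $\dim f^X_q(X_q)=\dim X_q$. For the opposite inequality the key combinatorial input is that the hypothesis ``$\mathcal{M}$ appropriate'' forces the paths from distinct $q,q'\in\mathcal{M}$ to $t(\mathcal{M})$ to share only the endpoint $t(\mathcal{M})$: any common interior vertex $r$ would satisfy $q,q'\prec r\prec t(\mathcal{M})$, contradicting $t(\{q,q'\})=t(\mathcal{M})$. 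Consequently, on $R_\alpha(Q(\mathcal{N}))$ the arrow-coordinates used to build the different $f^X_q$ are pairwise disjoint, so the compositions $f^X_q$ vary independently as $X$ ranges over $R_\alpha(Q(\mathcal{N}))$.

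The function $X\mapsto\dim\bigcap f^X_q(X_q)$ is upper semi-continuous on $R_\alpha(Q(\mathcal{N}))$ (one realizes the intersection as the kernel of a linear family whose rank is lower semi-continuous, e.g.\ by combining the $f^X_q$ into a single map with target $\bigoplus_q X_{t(\mathcal{M})}$ and reading off the equalizer). Hence it suffices to exhibit one representation $X^0$ for which the asserted value is attained. I would construct $X^0$ by choosing the linear maps along each of the $|\mathcal{M}|$ mutually disjoint paths independently so that each composition $f^{X^0}_q$ is injective onto a generic subspace of $X^0_{t(\mathcal{M})}$ of dimension $\dim X_q$, and then completing the remaining arrows of $Q(\mathcal{N})$ arbitrarily. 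For $|\mathcal{M}|$ subspaces of a fixed vector space $W$ in general position, the identity $\dim\bigcap U_i=\max\{0,\sum\dim U_i-(|\mathcal{M}|-1)\dim W\}$ follows by a routine two-term induction from the transversality formula; applied to the images $f^{X^0}_q(X^0_q)\subseteq X^0_{t(\mathcal{M})}$ it gives exactly the right-hand side of the lemma.

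The main obstacle is turning the combinatorial independence of arrow-variables along distinct paths into honest genericity of the images $f^{X^0}_q(X^0_q)$ in the Grassmannian of $X^0_{t(\mathcal{M})}$. Here one uses that for a single path, varying its internal maps already sweeps out an irreducible family of compositions surjecting onto arbitrary subspaces of rank $\dim X_q$; the disjointness of paths then upgrades this to joint genericity, which is precisely what is needed for the mutual transversality of the images.
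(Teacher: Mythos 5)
Your proof is correct and follows essentially the same strategy as the paper's: reduce to a configuration of subspaces of $X_{t(\mathcal{M})}$ in general position (the paper asserts this reduction ``without loss of generality'' to the subspace quiver $S(n)$, whereas you justify it via the disjointness of the paths to $t(\mathcal{M})$ forced by appropriateness), then combine upper semicontinuity of the intersection dimension with the exhibition of a single transversal configuration. The only real difference is in that last step --- the paper builds the witnessing configuration by enlarging one subspace a vector at a time, while you invoke the standard iterated transversality formula for generic subspaces --- and both arguments, yours and the paper's, tacitly assume the dimension vector makes a general representation strict along the relevant paths, which is the setting in which the lemma is later applied.
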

\begin{proof}
Let $U$ and  $X$ be two $k$-vector spaces such that $U\subsetneq X$ and let $x\in X$. If $(b_1,\ldots,b_{\dim U})$ is a basis of $U$ then $x\in U$ is equivalent to $\mathrm{rank}(b_1,\ldots,b_{\dim U},x)=\dim U$. Thus $x\in U$ is a closed condition because it is equivalent to the vanishing of all $(\dim U+1)$-minors of the defined matrix.

Without loss of generality, we can assume that $\mathcal M\cup t(\mathcal M)$ corresponds to the subspace quiver $S(n)$ for some $n\in\mathbb N$. We proceed by induction on $n$ and on the dimension of $X_{q_n}$ where we use the notation of Section \ref{rep}.
Assume that $0\leq \dim X_{q_n}<\dim X_{q_0}$. If $U:=\bigcap_{i=1}^{n-1}X_{q_i}+X_{q_n}\neq X_{q_0}$, let $x\in X_{q_0}$ such that $x\notin U$. Let $(b_1,\ldots,b_{\dim X_{q_n}})$ be a basis of $X_{q_n}$ and define $\tilde{X}_{q_n}:=\langle b_1,\ldots,b_{\dim X_n},x\rangle$. 

If $U=X_{q_0}$, let $x\notin X_{q_n}$ and define $\tilde{X}_{q_n}$ as before. In both cases we have
\begin{eqnarray*}\dim\bigcap_{i=1}^{n-1}X_{q_i}\cap\tilde{X}_{q_n}&=&\dim\bigcap_{i=1}^{n-1}X_{q_i}+\dim\tilde{X}_{q_n}-\dim (\tilde{X}_{q_n}+\bigcap_{i=1}^{n-1}X_i)\\&=&\max\{0,\sum_{i=1}^{n-1}\dim X_{q_i}+\dim \tilde{X}_{q_n}-(n-1)\dim X_{q_0}\}.\end{eqnarray*}
Thus the claim follows by the first part of the proof. 
\qed
\end{proof}

Let $\mathcal{N}$ be a poset and let 
\[\mathcal{P}=\{q\in\mathcal{N}\mid\exists\, q_1,q_2\in\mathcal{N},\,q_1,q_2\text{ are incomparable}\,,q\prec q_1,q_2\}.\] 
The poset $\mathcal{N'}=\mathcal{N}\backslash\mathcal{P}$ is associated to an unbound quiver. We call the tuple of posets $(\mathcal{N'},\mathcal{N})$ (resp. the tuple of corresponding quivers) related. For instance,
starting with the non-primitive poset $(N,5)$, we get the related primitive
poset $(2,1,5)$, see Section \ref{Section1} for the notation.

In the following, we assume that $\mathcal{N}'$ and $\mathcal{N}=\mathcal{N'}\cup\{q\}$ are related. Moreover, we assume that $N_q$ is an appropriate subset of $\mathcal N$. The first assumption is no restriction because we will see that the case $\mathcal{N}=\mathcal{N'}\cup\{q_1,\ldots,q_n\}$ can be treated by applying Lemma \ref{extUnit}. 

Using the notation of Section \ref{rep} it is easy to check that we have $r(q,t(N_q),I)=|N_q|-1$ and $r(l,l',I)=0$ otherwise where $I$ is the ideal generated by the commutativity relations as described in Section \ref{reprpos}. Fixing a dimension vector, for a representation of the poset $\mathcal{N}$ satisfying the dimension formula of Lemma $\ref{dimform}$, it is often straightforward to write down a projective and injective resolution of minimal length, see \cite[Chapter I.5]{ass} for more details. Moreover, in these cases the global dimension is at most two because projective resolutions of minimal length of the simple modules $S_q$, $q\in Q(\mathcal{N})_0$, defined by $(S_q)_q=k$ and $(S_q)_{q'}=0$ if $q'\neq q$, have at most length two, see \cite[Theorem A.4.8]{ass}.

Obviously, every representation of $Q(\mathcal{N}')$ can be naturally 
understood as a representation of $Q(\mathcal{N})$. Let $\alpha'$ be
a dimension vector of $Q(\mathcal{N'})$ such that a general
representation is polystable with respect to the linear form $\Theta_{\alpha '}$, i.e. the canonical decomposition of $\alpha'$ only consists of
Schur roots of the same slope, see \cite{sch} for the general theory concerning canonical decomposition and \cite{dw} for a very useful algorithm
determining the canonical decomposition. Note that in this case we have $\Sc{\beta}{\alpha'}-\Sc{\alpha'}{\beta}=0$ for all roots $\beta$ contained in the canonical decomposition.

Clearly, every
representation of $Q(\mathcal{N'})$ satisfies the commutativity
relations of $Q(\mathcal{N})$. In particular, the varieties of
representations corresponding to dimension vectors $\alpha\in\Nn
Q(\mathcal{N})_0$ with $\alpha_q=0$ are irreducible, see \cite{kin}. Let
$X'=\bigoplus_{i=1}^m (X_i')^{t_i}$ with $\dim X'=\alpha'$ and
$X'_i\ncong X'_j$ for $i\neq j$ be a general polystable
representation of $Q(\mathcal{N})$ and $S_q$ be the simple module
corresponding to $q$. Since we have $\dim X'_q=0$, it is
straightforward that we have
\[\dim\Ext_{Q(\mathcal{N})}(S_q,X')=\dim \bigcap_{l\in N_q}X'_l=\max\{0,\sum_{l\in N_q}\dim X'_l-(|N_q|-1)\dim X'_{t(N_q)}\}\]
where $t(N_q)$ is the vertex of the quiver $Q(\mathcal{N})$
where the relations starting at $q$
terminate. Thus, if $\dim \bigcap_{l\in N_q}X'_l\neq\{0\}$, we have
\[-\Sc{S_q}{X'}=\sum_{l\in N_q}\dim
X'_l-(|N_q|-1)\dim X'_{t(N_q)}=\dim\Ext_{Q(\mathcal{N})}(S_q,X'),\]
and, therefore, we generally have
$\Ext^2(S_q,X')=0$. Moreover, for two representations $X'$ and $Y'$ of $Q(\mathcal{N'})$ we obviously have $\Ext_{Q(\mathcal{N})}(X',Y')=\Ext_{Q(\mathcal{N'})}(X',Y')$ and $\Hom_{Q(\mathcal{N})}(X',Y')=\Hom_{Q(\mathcal{N'})}(X',Y')$. In the following, we will skip the index $Q(\mathcal{N})$, and we will only use indices if we consider the quiver $Q(\mathcal{N'})$.

\begin{defn}
We call a dimension vector $\alpha'$ of $Q(\mathcal{N'})$ strongly
strict if for a general representation $X'$ with $\underline{\dim}
X'=\alpha'$, we have $\Ext(S_q,X')\neq 0$.
\end{defn}
For instance, in the case of the poset $(N,5)$ we may consider the
related poset $(2,1,5)$ and the unique imaginary Schur root
$\alpha'=(6;2,4;3;1,2,3,4,5)$. This root is strongly strict and we get a representation of the poset $(N,5)$ with dimension vector $\alpha=(6;2,4;1,3;1,2,3,4,5)$ by an extension with the simple module corresponding to the additional source.

Define $n_i:=\dim\Ext(S_q,X'_i)$. Consider the quiver $\tilde{Q}$
with vertices $\tilde{Q}_0=\{l_0,l_1,\ldots,l_m\}$ and arrows
$\tilde{Q}_1=\{\rho_{i,j}:l_0\rightarrow l_j\mid
i\in\{1,\ldots,m\},\,j\in\{1,\ldots,n_i\}\}$. Then every
representation of this quiver with dimension vector
$t=(t_0,t_1,\ldots,t_m)$ induces an exact sequence
$e\in\Ext(S_q^{t_0},X')$ and vice versa. More detailed, keeping in mind the description of exact sequences given in Section \ref{reprpos},
every exact sequence $e$ is uniquely determined by a linear map 
\[f(e):(S_q^{t_0})_q\rightarrow\bigoplus_{i=1}^{m}(\cap_{q'\in N_q}(X'_i)_{q'})^{t_i},\]
i.e. a linear map $f(e):k^{t_0}\rightarrow\bigoplus_{i=1}^mk^{t_in_i}$. In turn, the components of this map define linear maps
\[X(e)_{i,j}=(f(e)_{i,1,j},\ldots,f(e)_{i,t_i,j}):k^{t_0}\rightarrow k^{t_i}\]
for $i=1,\ldots,m$ and $j=1,\ldots,n_i$, and, therefore, a representation of $\tilde{Q}$ of dimension $(t_0,t_1,\ldots,t_m)$. Reversing this construction, every representation of $\tilde{Q}$ defines an exact sequence $e\in\Ext(S_q^{t_0},X')$.
\begin{lem}
The middle terms of $e$ and $e'$ are isomorphic if and only if $X(e)$ and $X(e')$ are isomorphic.
\end{lem}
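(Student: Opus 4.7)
The plan is to convert the lemma into an orbit problem and then verify the resulting identification is equivariant. First, I would show that the subrepresentation $X'\subset M(e)$ (where $M(e)$ denotes the middle term of $e$) is canonically determined by $M(e)$ alone, not by the extension class. Since $X'_q=0$ while $S_q^{t_0}$ is concentrated at the vertex $q$, every morphism $X'\to S_q^{t_0}$ must vanish at $q$ and is automatically zero elsewhere, so $\Hom(X',S_q^{t_0})=0$. Applying $\Hom(-,S_q^{t_0})$ to the short exact sequence $e$ then yields $\Hom(M(e),S_q^{t_0})\cong \End(S_q^{t_0})$, with the isomorphism induced by the projection $\pi_e:M(e)\to S_q^{t_0}$. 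In particular, any isomorphism $\alpha:M(e)\to M(e')$ satisfies $\pi_{e'}\circ\alpha=\phi\circ\pi_e$ for a unique $\phi\in\End(S_q^{t_0})$, which is necessarily invertible because the left-hand side is surjective; hence $\alpha$ restricts to an automorphism $\psi$ of the common subobject $X'$.

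This will show that $M(e)\cong M(e')$ holds precisely when there exist $\phi\in\mathrm{Aut}(S_q^{t_0})$ and $\psi\in\mathrm{Aut}(X')$ making the obvious diagram of short exact sequences commute, equivalently when $\psi_\ast e=\phi^\ast e'$ in $\Ext(S_q^{t_0},X')$. The converse direction is the standard fact that pushforward or pullback of an extension by an automorphism leaves the middle term isomorphic. Thus the lemma reduces to verifying that two extensions lie in the same orbit of $\mathrm{Aut}(S_q^{t_0})\times\mathrm{Aut}(X')$ on $\Ext(S_q^{t_0},X')$ if and only if the associated representations $X(e)$ and $X(e')$ of $\tilde{Q}$ are isomorphic.

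The next step is to compute these automorphism groups explicitly. Since $S_q$ is simple, $\mathrm{Aut}(S_q^{t_0})=\mathrm{GL}_{t_0}(k)$. The polystability of $X'=\bigoplus_i(X'_i)^{t_i}$, combined with the fact that distinct stable summands of the same slope admit no non-zero morphisms between them, gives $\End(X'_i)=k$ and $\Hom(X'_i,X'_j)=0$ for $i\neq j$, so $\mathrm{Aut}(X')=\prod_{i=1}^m\mathrm{GL}_{t_i}(k)$. Together these form exactly the group $G_t=\mathrm{GL}_{t_0}(k)\times\prod_i\mathrm{GL}_{t_i}(k)$ which acts by simultaneous base change on the space of representations of $\tilde{Q}$ with dimension vector $t=(t_0,t_1,\ldots,t_m)$.

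Finally, I would check that the identification $\Ext(S_q^{t_0},X')\cong R_t(\tilde{Q})$ given just above the lemma, through the linear map $f(e):k^{t_0}\to\bigoplus_i k^{t_in_i}$, is $G_t$-equivariant: pullback by $\phi\in\mathrm{GL}_{t_0}$ acts by precomposition $f(e)\mapsto f(e)\circ\phi$, and pushforward by $\psi_i\in\mathrm{GL}_{t_i}$ acts by postcomposition on the corresponding $k^{t_i}$-factor of each summand. Both match precisely the $G_t$-action on the component maps $X(e)_{i,j}:k^{t_0}\to k^{t_i}$, so the orbit equivalence on $\Ext(S_q^{t_0},X')$ is transported to isomorphism in the category of representations of $\tilde{Q}$, which will prove the lemma. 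The main obstacle is the very first step: justifying that $X'$ can be recovered canonically from $M(e)$, since this is what transforms the question into an orbit problem; once this is secured, the remainder is a matter of tracing identifications.
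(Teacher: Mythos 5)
Your proof is correct and follows essentially the same route as the paper's: both reduce the statement to the observation that $\Hom(X',S_q^{t_0})=\Hom(S_q^{t_0},X')=0$ forces any isomorphism of middle terms to restrict to automorphisms of the sub $X'$ and quotient $S_q^{t_0}$, identify $\mathrm{Aut}(S_q^{t_0})\times\mathrm{Aut}(X')$ with $\prod_{i=0}^m\mathrm{Gl}_{t_i}(k)$ via $\End(X'_i)=k$, and match this with the base-change action on representations of $\tilde{Q}$. Your version merely packages the two directions as a single equivariance check and is, if anything, slightly more explicit than the paper about why $\mathrm{Aut}(X')=\prod_{i=1}^m\mathrm{Gl}_{t_i}(k)$.
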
 
\begin{proof}
Let $g\ast X(e)=X(e')$ with $g=(g_0,g_1\ldots,g_m)\in \prod_{i=0}^m\mathrm{Gl}_{t_i}(k)$. Since $\End(X'_i)=k$, this induces bijective endomorphisms $g_0:(S_q)^{t_0}\rightarrow(S_q)^{t_0}$ and $g_i:(X'_i)^{t_i}\rightarrow (X'_i)^{t_i}$ for $i=1,\ldots,m$.
In particular, we get the following commutative diagram
\[
\begin{xy}
\xymatrix@R40pt@C40pt{
(S_q^{t_0})_q\ar[d]^{(X(e)_{i,j})_{j=1,\ldots,n_i}}\ar[r]^{g_0}&(S_q^{t_0})_q\ar[d]^{(X(e')_{i,j})_{j=1,\ldots,n_i}}\\\
(\cap_{q'\in N_q}(X'_i)_{q'})^{t_i}\ar[r]_{g_i|_{\cap_{q'\in N_q}(X'_i)_{q'}}}&(\cap_{q'\in N_q}(X'_i)_{q'})^{t_i}
}
\end{xy}\]

showing that the middle terms associated with $X(e)$ and $X(e')$ are isomorphic.
The other way around, assume that the middle terms of $e$ and $e'$ are isomorphic. Since $\Hom(S_q,X')=\Hom(X',S_q)=0$ and by the universal property of the kernel and cokernel, we naturally obtain a commutative diagram
\[
\begin{xy}
\xymatrix@R20pt@C20pt{
e:0\ar[r]&X'\ar[d]_{g'}\ar[r]&X\ar^\cong[d]\ar[r]&S_q^{t_0}\ar[d]_{g_0}\ar[r]&0\\
e':0\ar[r]&X'\ar[r]&\tilde{X}\ar[r]&S_q^{t_0}\ar[r]&0
}
\end{xy}\]
where $g'$ and $g_0$ are isomorphisms inducing an isomorphism $(g_0,\ldots,g_m)\in\prod_{i=0}^m\mathrm{Gl}_{t_i}(k)$ between $X(e)$ and $X(e')$.  
\end{proof}
We  have the following result, see \cite[Theorem 1.2]{rin} where it is used that the given Schur roots (including the simple one) are pairwise orthogonal, i.e. there exist no homomorphisms between them: 
\begin{thm}
The category of representations of $\tilde{Q}$ is equivalent to the category of representations $X$ of $Q(\mathcal{N})$ having a filtration
\[\ses{\bigoplus_{i=1}^m(X_i')^{t_i}}{X}{S_q^{t_0}}\]
for some $t\in\mathbb{N}\tilde{Q}_0$. 
\end{thm}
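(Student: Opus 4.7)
The plan is to apply Ringel's result \cite[Theorem 1.2]{rin}, whose hypothesis is the pairwise orthogonality of a family of Schurian modules together with the identification of the associated Ext-quiver. So the proof reduces to verifying these conditions for the collection $\{S_q, X'_1, \ldots, X'_m\}$, and checking that the Ext-quiver is precisely $\tilde{Q}$.

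First I would verify Schurianness and pairwise orthogonality. Each $X'_i$ is Schurian by Lemma \ref{zusa}(4) since it is a stable summand of $X'$, while the simple module $S_q$ is trivially Schurian. For $i \neq j$, the $X'_i, X'_j$ are non-isomorphic stable representations of the same slope, so any nonzero morphism between them would necessarily be an isomorphism; hence $\Hom(X'_i, X'_j) = 0$. The remaining conditions $\Hom(S_q, X'_i) = \Hom(X'_i, S_q) = 0$ follow because $(X'_i)_q = 0$ while $S_q$ is supported only at $q$, forcing any morphism in either direction to vanish componentwise.

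Second, I would identify the Ext-quiver. By construction, $n_i = \dim \Ext(S_q, X'_i)$ equals the number of arrows from $l_0$ to $l_i$. To rule out extraneous arrows, one needs $\Ext(X'_i, S_q) = 0$ and $\Ext(X'_i, X'_j) = 0$ for $j \neq 0$. The former follows from the description of Ext in Section \ref{reprpos}: since $q$ is a source of $Q(\mathcal{N})$ and $(X'_i)_q = 0$, every term entering the relevant cokernel vanishes. The latter uses the identification $\Ext_{Q(\mathcal{N})}(X'_i, X'_j) = \Ext_{Q(\mathcal{N}')}(X'_i, X'_j)$ noted before the statement, together with the Ext-orthogonality of distinct summands in the canonical decomposition of $\alpha'$.

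With these hypotheses in place, Ringel's theorem delivers the desired categorical equivalence. The preceding lemma already handles the bijection on isomorphism classes, and fully faithfulness at the level of morphisms follows by the same diagram chase used there: given a morphism between middle terms, the orthogonality conditions together with the universal properties of kernel and cokernel produce induced maps on the outer terms of the two extensions, and these decompose by Schur-orthogonality into the component data of a $\tilde{Q}$-morphism. I expect the main subtlety to lie in the Ext-vanishing verifications of the second step, particularly ensuring that no loops or extraneous arrows appear between the vertices $l_1, \ldots, l_m$, since this requires careful use of the properties of the canonical decomposition and the fact that $q$ was added as a source of $Q(\mathcal{N})$.
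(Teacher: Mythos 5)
Your proposal matches the paper's own treatment: the paper proves this theorem simply by citing Ringel's Theorem 1.2 from \cite{rin} and noting that the Schur roots involved (including the simple one) are pairwise orthogonal, which is exactly the reduction you carry out, with your verification of Schurianness, Hom-orthogonality, and the Ext-quiver identification filling in the details the paper leaves implicit. The approach is the same and the argument is correct.
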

We obtain the following corollary:
\begin{cor}Let $X'$ be a polystable representation of $Q(\mathcal{N'})$. Then there exists a strict indecomposable (resp. Schurian) representation of $Q(\mathcal{N})$ satisfying the commutativity relations if $(t_0,t_1,\ldots,t_n)$ is a root (resp. Schur root) of $\tilde{Q}$.
\end{cor}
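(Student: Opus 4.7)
The plan is to deduce the corollary directly from the equivalence of categories established in the preceding theorem. Given a root (resp.\ Schur root) $t=(t_0,t_1,\ldots,t_m)$ of $\tilde{Q}$, by definition there exists an indecomposable (resp.\ Schurian) representation $Y$ of $\tilde{Q}$ with $\underline{\dim}Y=t$. Applying the equivalence, I obtain a representation $X$ of $Q(\mathcal{N})$ fitting into the short exact sequence
\[0 \to \bigoplus_{i=1}^m (X_i')^{t_i} \to X \to S_q^{t_0} \to 0.\]
Since any equivalence of categories preserves endomorphism rings, $X$ inherits indecomposability (resp.\ Schurian-ness) from $Y$. Because the target of the equivalence is by construction the category of representations of the bound quiver $Q(\mathcal{N})$, the commutativity relations hold for $X$ automatically.

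The remaining task is strictness. By the construction of $\mathcal P$, the new vertex $q$ has no predecessors in $Q(\mathcal{N})$: any $q'\prec q$ would also be strictly below the two incomparable elements above $q$, hence lie in $\mathcal P$ and be removed together with $q$. Thus the only arrows incident to $q$ point out of $q$ into vertices of $N_q$. Along any arrow not incident to $q$, the map in $X$ is a direct sum of the corresponding maps in the $X_i'$, so strictness there follows from strictness of $X'$; I would take $X'$ in the nonempty open locus of strict polystable representations inside $R_{\alpha'}(Q(\mathcal N'))$, which is harmless since we are only required to exhibit one strict $X$. For an outgoing arrow $\rho\colon q\to l$ the map reads
\[X_q=k^{t_0}\;\xrightarrow{\,f\,}\;\bigoplus_{i=1}^m\Bigl(\bigcap_{q'\in N_q}(X_i')_{q'}\Bigr)^{t_i}\;\hookrightarrow\;\bigoplus_{i=1}^m(X_i')_l^{t_i}=X_l,\]
so strictness at $\rho$ reduces to injectivity of the map $f$ that encodes the extension class.

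The main obstacle is establishing this injectivity. I would argue as follows: if $K\subseteq k^{t_0}$ were a nonzero kernel of $f$, then placing $K$ at the vertex $q$ and zero elsewhere yields a subrepresentation of $X$ isomorphic to $S_q^{\dim K}$, and since $q$ has no incoming arrows any linear splitting of $K\subseteq k^{t_0}$ extends trivially to a retraction of representations $X\to S_q^{\dim K}$. Hence $K$ would be a direct summand of $X$, contradicting indecomposability of $X$ (in the Schurian case the induced projection already provides a non-scalar endomorphism). Equivalently, on the $\tilde Q$ side a nonzero kernel of $f$ would split off a copy of the simple at $l_0$ from $Y$, again contradicting the assumption on $Y$. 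Consequently $f$ is injective, $X$ is strict, and the construction produces the desired representation of $Q(\mathcal{N})$.
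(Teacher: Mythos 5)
Your proposal is correct and follows exactly the route the paper intends: the corollary is stated as an immediate consequence of the preceding equivalence of categories, so transporting an indecomposable (resp.\ Schurian) representation of $\tilde{Q}$ of dimension $t$ across the equivalence and noting that endomorphism rings are preserved is precisely the paper's (unwritten) argument. Your additional verification of strictness---reducing it to injectivity of the extension map $f$ and deriving injectivity from indecomposability by splitting off $S_q^{\dim K}$ at the source $q$---is a correct treatment of the one point the paper leaves entirely implicit.
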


We call the dimension vector $t$ stable if it is a Schur root and
polystable if it has the canonical decomposition
$t=\bigoplus_{i=1}^m\alpha_{l_i}^{t_l}$ with $\alpha_{l_i}=l_0+n_i l_i$. Moreover, we call the
extension $e$ stable (resp. polystable) if the corresponding
representation of $\tilde{Q}$ is stable (resp. polystable).

We will need the following lemma:
\begin{lem}\label{polystable}
Let $X'=\bigoplus_{i=1}^m (X_i')^{t_i}$ with $X'_i\ncong X'_j$ for
$i\neq j$ be a polystable representation. If $Y'$ is an
indecomposable subrepresentation of $X'$ such that $\Hom(X',Y')\neq 0$, it
follows that $Y'\cong X'_i$ for some $i\in\{1,\ldots,m\}$.
\end{lem}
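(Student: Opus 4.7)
The plan is to extract a direct-summand embedding $X'_i\hookrightarrow X'$ whose image lies inside $Y'$, and then exploit the indecomposability of $Y'$ via the modular law.

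First, I would use $\Hom(X',Y')\neq 0$ to pick a nonzero morphism $f\colon X'\to Y'$. Decomposing $X'=\bigoplus_i (X'_i)^{t_i}$, some restriction $f|_{X'_i}\colon X'_i\to Y'$ is nonzero; composing with the inclusion $\iota\colon Y'\hookrightarrow X'$ yields a nonzero morphism $h:=\iota\circ f|_{X'_i}\colon X'_i\to X'$.

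Next, I would show that $h$ embeds $X'_i$ as a \emph{direct summand} of $X'$. Since $X'$ is polystable, all its summands $X'_j$ are stable of the same slope $\mu$; between two non-isomorphic stable representations of the same slope, any nonzero morphism would be both mono (otherwise the kernel would violate stability of the source) and epi (otherwise the cokernel would violate stability of the target), hence an isomorphism. So $\Hom(X'_i,X'_j)=0$ for $j\neq i$. Together with $\End(X'_i)=k$ (Lemma \ref{zusa}(4)), this forces $h$ to factor through the $(X'_i)^{t_i}$ summand and to have the form $x\mapsto(\lambda_1 x,\ldots,\lambda_{t_i}x)$ for scalars $\lambda_j$, not all zero. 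Completing the column $(\lambda_1,\ldots,\lambda_{t_i})^T$ to an invertible $t_i\times t_i$ matrix rewrites $(X'_i)^{t_i}$ as $h(X'_i)\oplus (X'_i)^{t_i-1}$, so $h(X'_i)$ is a direct summand of $X'$, say $X'=h(X'_i)\oplus C$ for some complement $C$.

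Finally, because $h$ factors through $Y'$, the image $h(X'_i)$ lies inside $Y'$. The modular law in the abelian category of representations (applied to the inclusions $h(X'_i)\subseteq Y'\subseteq X'=h(X'_i)\oplus C$) gives $Y'=h(X'_i)\oplus (Y'\cap C)$. Indecomposability of $Y'$ together with $h(X'_i)\neq 0$ then forces $Y'\cap C=0$, whence $Y'=h(X'_i)\cong X'_i$.

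The only step requiring real care is identifying $h$ as a direct-summand inclusion: this is where the full polystable structure (Hom-vanishing between distinct summands plus the Schur property of each summand) is used. All other manipulations are formal consequences of the modular law and the indecomposability hypothesis, so once that step is in place the argument closes immediately.
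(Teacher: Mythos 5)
Your proof is correct and follows essentially the same route as the paper: both restrict a nonzero morphism $X'\to Y'$ to a single stable summand $X'_i$, use the vanishing of homomorphisms between non-isomorphic stables of equal slope together with $\End(X'_i)=k$ to exhibit a copy of $X'_i$ as a direct summand of $Y'$, and conclude by indecomposability. The only (cosmetic) difference is how the splitting of $Y'$ is produced -- the paper builds an explicit retraction $Y'\to X'_i$ from the projection onto the $i$-th isotypic component, while you first realize $h(X'_i)$ as a direct summand of the ambient $X'$ and then invoke the modular law.
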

\begin{proof}
Let $\Psi:X'\rightarrow Y'$ be non-zero. Since the canonical composition $\tau:X'_j\hookrightarrow X'\xrightarrow{\Psi} Y'$ is not zero for some $j$, this defines a factor representation $\mathrm{Im}(\tau)=U$ of $X'_j$ which is a subrepresentation of $X'$. Thus we have $\mu(X'_j)\leq\mu(U)\leq\mu(X')$ and thus $\mu(X'_j)=\mu(U)=\mu(X')$. It follows that $U\cong X'_j$.\\
Since $Y'$ is a subrepresentation of $X'$, the
canonical composition $\phi:Y'\hookrightarrow X'\cong\bigoplus_{i=1}^m (X_i')^{t_i}\twoheadrightarrow
X'_i$ defines a non-zero homomorphism
$\phi\circ\tau:X'_j\rightarrow X'_i$ for some $i$. It follows that $i=j$. Moreover,
since $\End(X'_i)=k$, we obtain that $\phi\circ\tau$ is forced to be an isomorphism and, therefore,
$X'_j$ is a direct summand of $Y'$. Since $Y'$ is indecomposable, we
have $Y'\cong X'_j$.
\end{proof}
In this setup, let $X$ be a stable extension of some general
representation $X'$ with $\underline{\dim} X'=\alpha'$ where
$\alpha'$ is strongly strict. The remaining part of this section is dedicated to proving that we can construct stable (resp. unitarizable) representations of $Q(\mathcal{N})$ in this way, see Theorem \ref{nonprimitive} for the precise statement. 

Every subrepresentation $Y$ of $X$
induces a subrepresentation $Y'$ of $X'$. In particular, we get a
commutative diagram
\begin{eqnarray}\label{cd}
\begin{xy}
\xymatrix@R20pt@C20pt{
&0&0&0\\0\ar[r]&S_q^{r_0}\ar[u]\ar[r]&S_q^{t_0}\ar[u]\ar[r]&S_q^{t_0-r_0}\ar[u]\ar[r]&0\\
0\ar[r]&Y\ar[r]\ar[u]&X\ar[u]\ar[r]&X/Y\ar[u]\ar[r]&0\\
0\ar[r]&Y'\ar[u]\ar[r]&X'\ar[u]\ar[r]&X'/Y'\ar[u]\ar[r]&0\\
&0\ar[u]&0\ar[u]&0\ar[u]&}
\end{xy}
\end{eqnarray}
Since $\alpha'$ is strongly strict, we have
$\Ext^2(Y,X)\cong\Ext^2(Y',X)=0$. Moreover, since
$\Hom(Y',S_q)=\Ext(Y',S_q)=0$, we get $\Hom(Y',X')\cong\Hom(Y',X)$
and $\Ext(Y',X')\cong\Ext(Y',X)$. We also have $\Hom(S_q,X)=0$.
Since we have $\Ext^2(S_q,X')=\Ext^2(S_q,X)=0$, from the long exact
sequence
\[0\rightarrow\Hom(Y,X)\rightarrow\Hom(Y',X)\rightarrow\Ext(S_q^{r_0},X)\rightarrow\Ext(Y,X)\rightarrow\Ext(Y',X)\rightarrow 0\]
we get
\[\dim\Hom(Y,X)-\dim\Ext(Y,X)=\dim\Hom(Y',X')-\dim\Ext(Y',X')-\dim\Ext(S_q^{r_0},X).\]
 Moreover, we have $\Ext^2(X,Y)\cong\Ext^2(X,Y')$ and we get 
long exact sequences
\[0\rightarrow\Hom(X,Y')\rightarrow\Hom(X,Y)\rightarrow\Hom(X,S_q^{r_0})\rightarrow\Ext(X,Y')\rightarrow\Ext(X,Y)\rightarrow 0\]
and
\begin{eqnarray*}0\rightarrow\Hom(X,Y')\rightarrow\Hom(X',Y')\rightarrow\Ext(S_q^{t_0},Y')\rightarrow\Ext(X,Y')\\\rightarrow\Ext(X',Y')\rightarrow\Ext^2(S_q^{t_0},Y')\rightarrow\Ext^2(X,Y')\rightarrow 0\end{eqnarray*}
where $\Ext^2(X',Y')=0$.
Thus we get
\begin{eqnarray*}
\Sc{X}{Y}&=&\dim\Hom(X',Y')-\dim\Ext(X',Y')+\dim\Hom(X,S_q^{r_0})-\dim\Ext(S_q^{t_0},Y')\\&&+\dim\Ext^2(S_q^{t_0},Y').\end{eqnarray*}
Thus in summary we get
\begin{eqnarray*}
\Sc{Y}{X}-\Sc{X}{Y}&=&\Sc{Y'}{X'}-\Sc{X'}{Y'}-\dim\Ext(S_q^{r_0},X)-\dim\Hom(X,S_q^{r_0})\\&&+\dim\Ext(S_q^{t_0},Y')-\dim\Ext^2(S_q^{t_0},Y').
\end{eqnarray*}
%Define
%\[\Psi(X,Y)=\dim\Ext(S_q^l,X)+\dim\Hom(X,S_q^l)-\dim\Ext(S_q^r,Y').\]
%Then we have
%\begin{eqnarray*}
%\Psi(X,Y)&=&l(\dim\bigcap_{i\in N_q}X_i-\dim X_q)+l\dim X_q-r\dim\bigcap_{i\in N_q}Y'_i\\
%&=&l\dim\bigcap_{i\in N_q}X_i-r\dim\bigcap_{i\in N_q}Y'_i.
%\end{eqnarray*}
For a strongly strict dimension vector $\alpha$ of $Q(\mathcal{N})$
we fix the linear form $\Theta_{\alpha}:\Zn Q_0\rightarrow \Zn$
given by
\begin{eqnarray*}\Theta_{\alpha}(\beta)&=&\Sc{\beta}{\alpha}-\Sc{\alpha}{\beta}\\
&=&-\sum_{\rho:l\rightarrow l'\in
Q(\mathcal{N})_1}\beta_l\alpha_{l'}+\sum_{(l,l')\in
(Q_0)^2}r(l,l',I)\beta_l\alpha_{l'}+ \sum_{\rho:l\rightarrow l'\in
Q(\mathcal{N})_1}\alpha_l\beta_{l'}-\sum_{(l,l')\in
(Q_0)^2}r(l,l',I)\alpha_l\beta_{l'}\\
&=&-\sum_{\rho:l\rightarrow l'\in
Q(\mathcal{N})_1}\beta_{l}\alpha_{l'}+(|N_q|-1)\beta_q\alpha_{t(N_q)}+\sum_{\rho:l\rightarrow l'\in
Q(\mathcal{N})_1}\alpha_{l}\beta_{l'}-(|N_q|-1)\alpha_q\beta_{t(N_q)}.\end{eqnarray*} 
\begin{rem}\label{weight}
\end{rem}
\begin{itemize}
\item Let $\mathcal{N}=\mathcal{N'}\cup \{q\}$ be a poset related to a poset $\mathcal{N'}$. Recall that $\mathcal N'$ is assciated with an unbound quiver. Let $\alpha$ and $\beta$ be two dimension vectors of $\mathcal{N}$ and let $\alpha'$ and $\beta'$ be the corresponding dimension vectors of $\mathcal{N'}$ such that $\beta'\hookrightarrow\alpha '$. Consider the linear form induced by the considerations from above:
\begin{eqnarray*}
\tilde{\Theta}_{\alpha}(\beta)&=&\Sc{\beta'}{\alpha'}-\Sc{\alpha'}{\beta'}-\beta_q(\sum_{l\in
N_q}\alpha_l-(|N_q|-1)\alpha_{t(N_q)})+\\ &&\alpha_q(\sum_{l\in
N_q}\beta_l-(|N_q|-1)\beta_{t(N_q)}).\end{eqnarray*} It is
straightforward that we have
$\Theta_{\alpha}=\tilde{\Theta}_{\alpha}.$ 

This linear form corresponds to the following weight as we will see in the next theorem: let $\chi'$ be the weight as given in Theorem \ref{thm1}. Let $\chi$ be the weight such
$\chi_q=\sum_{l\in N_q}\alpha_l-(|N_q|-1)\alpha_{t(N_q)}$,
$\chi_l=\chi'_l-\alpha_q$ for all $l\in N_q$,
$\chi_{t(N_q)}=\chi'_{t(N_q)}+\alpha_q(|N_q|-1)$ and
$\chi_l=\chi'_l$ for the remaining vertices. 
\end{itemize}
Using the notation of the preceding remark we get the following statement:
\begin{thm} Let $\mathcal{N}=\mathcal{N'}\cup \{q\}$ be a poset related to a poset $\mathcal{N'}$. A representation $X$ of dimension
$\alpha$ can be unitarized with the weight $\chi$ if and only if we have
$\tilde{\Theta}_{\alpha}(\underline{\dim}Y)>0$ for every
subrepresentation $Y$ of $X$. If there is at least one such
representation, there exists an open, not necessarily dense, subset of
unitarizable representations.
\end{thm}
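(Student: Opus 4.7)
The plan is to reduce the claim to the Hu/King/Totaro criterion (Theorem \ref{huTheorem}) together with the dictionary of Remark \ref{connection}, which translates ``unitarizable with weight $\chi$'' into $\tilde{\Theta}_\chi$-stability in the sense of King, where
\[\tilde{\Theta}_\chi(\beta) \;=\; \chi_0\beta_{q_0} - \sum_{l \in \mathcal{N}}\chi_l\beta_l.\]
It then suffices to verify by direct substitution that, for the weight $\chi$ constructed in Remark \ref{weight}, one has $\tilde{\Theta}_\chi = \tilde{\Theta}_\alpha$ as linear forms on $\Zn Q(\mathcal{N})_0$.

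First I would note that under the equivalence between $\mathrm{Rep}(\mathcal{N})$ and the category of strict representations of $Q(\mathcal{N})$ bound by the commutativity ideal, any subrepresentation $Y$ of $X$ corresponds to a subspace $U \subseteq V_0$ together with subspaces $U_l \subseteq V_l \cap U$. Since the right-hand side of the Hu inequality is monotone in $\dim U_l$ and the King inequality $\tilde{\Theta}_\chi(\underline{\dim} Y) > 0$ depends only on the dimension vector, it is enough to test Theorem \ref{huTheorem} on the subrepresentations where $U_l = V_l \cap U$; clearing denominators by $\beta_{q_0}\alpha_{q_0}$ in that theorem turns the Hu inequality into exactly $\tilde{\Theta}_\chi(\underline{\dim} Y) > 0$.

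Next I would expand $\tilde{\Theta}_\chi$ by substituting the four defining clauses of $\chi$ from Remark \ref{weight}: $\chi_l = \chi'_l$ for $l \notin N_q \cup \{q, t(N_q)\}$, $\chi_l = \chi'_l - \alpha_q$ for $l \in N_q$, $\chi_{t(N_q)} = \chi'_{t(N_q)} + (|N_q|-1)\alpha_q$, and $\chi_q = \sum_{l \in N_q}\alpha_l - (|N_q|-1)\alpha_{t(N_q)}$. The contributions coming only from the $\chi'_l$-pieces collapse to $\tilde{\Theta}_{\chi'}(\beta')$, which by the computation inside the proof of Theorem \ref{thm1} equals $\Sc{\beta'}{\alpha'} - \Sc{\alpha'}{\beta'}$. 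The remaining $\alpha_q$-correction terms, together with the $\chi_q\beta_q$-contribution, collect precisely into
\[\alpha_q\!\left(\sum_{l \in N_q}\beta_l - (|N_q|-1)\beta_{t(N_q)}\right) - \beta_q\!\left(\sum_{l \in N_q}\alpha_l - (|N_q|-1)\alpha_{t(N_q)}\right),\]
so $\tilde{\Theta}_\chi$ matches the definition of $\tilde{\Theta}_\alpha$ recorded in Remark \ref{weight}. Combined with the first paragraph this proves both directions of the equivalence.

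The openness claim is then an immediate consequence of Theorem \ref{kingsatz}(1): the $\tilde{\Theta}_\alpha$-stable locus is open in $R_\alpha(Q(\mathcal{N}),I)$, hence if non-empty it is automatically an open subset of unitarizable representations; it need not be dense because $R_\alpha(Q(\mathcal{N}),I)$ may be reducible once $Q(\mathcal{N})$ carries commutativity relations. The main obstacle in the argument is the bookkeeping of the substitution above: one has to track carefully the special vertices $q$ and $t(N_q)$ and to exploit that $\chi'$ was engineered in Theorem \ref{thm1} precisely so as to reproduce the Euler-form difference $\Sc{\,\cdot\,}{\alpha'} - \Sc{\alpha'}{\,\cdot\,}$ on the unbound subquiver $Q(\mathcal{N}')$.
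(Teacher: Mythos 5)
Your argument is correct and follows essentially the same route as the paper: the paper's proof simply says the equivalence ``follows analogously to the proof of Theorem \ref{thm1}'' (i.e.\ the Hu/King dictionary of Remark \ref{connection} plus the verification that the weight of Remark \ref{weight} induces the linear form $\tilde{\Theta}_{\alpha}$) and derives the openness from \cite[Section 1]{sch}, while you carry out that substitution explicitly and obtain openness from Theorem \ref{kingsatz}(1) instead — an immaterial difference. Your explicit remark that it suffices to test the Hu inequality on the maximal subrepresentations $U_l=V_l\cap U$ (using positivity of the weight) is exactly the point the paper leaves implicit.
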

\begin{proof} The first part follows analogously to the proof of
Theorem \ref{thm1}. The second part follows by \cite[Section 1]{sch}. 
\end{proof}
First we will show that every subrepresentation of $X'$ does not
contradict the stability condition.
\begin{prop}\label{Y'}
Let $\alpha$ be $\chi$-positive where the weight is given as in
Remark \ref{weight}. If $r_0=0$, i.e. $Y= Y'$, we have
\[\Sc{Y'}{X}-\Sc{X}{Y'}>0.\]
\end{prop}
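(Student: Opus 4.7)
The plan is to apply the identity developed just above the proposition and to specialise to $r_0=0$; the terms involving $S_q^{r_0}$ vanish, leaving
\[
\Sc{Y'}{X}-\Sc{X}{Y'}=\bigl(\Sc{Y'}{X'}-\Sc{X'}{Y'}\bigr)+\bigl(\dim\Ext(S_q^{t_0},Y')-\dim\Ext^2(S_q^{t_0},Y')\bigr).
\]
Since $X'_q=0$ we have $\Hom(S_q,Y')=0$, so the second summand equals $-t_0\Sc{S_q}{Y'}=\alpha_qT$ with $T:=\sum_{l\in N_q}\dim Y'_l-(|N_q|-1)\dim Y'_{t(N_q)}$; combining with the first summand one recovers exactly the linear form $\tilde\Theta_\alpha(\beta')$ of Remark \ref{weight}.

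Polystability of $X'$ with respect to $\Theta_{\alpha'}$ gives $\Theta_{\alpha'}(\beta')\geq 0$, with equality exactly when $\underline{\dim}Y'=\sum_i s_i\underline{\dim}X'_i$ for some $0\leq s_i\leq t_i$. I would then establish strict positivity case by case. If $Y'=X'$ the first summand vanishes and $T=\chi_q(\alpha)=\dim\Ext(S_q,X')>0$ by strong strictness of $\alpha'$, so $\tilde\Theta_\alpha(\underline{\dim}X')=\alpha_q\chi_q(\alpha)>0$. If $Y'\subsetneq X'$ has slope $0$ then $T=\sum s_iT_i$ with $T_i=-\Sc{S_q}{X'_i}\geq 0$ (the vanishing of $\Ext^2(S_q,X'_i)$ on each general stable summand forces non-negativity), and strong strictness of $\alpha'$ together with the constraints $0\leq s_i\leq t_i$ forces $T>0$ whenever $(s_i)\neq 0$. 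In both of these cases $\tilde\Theta_\alpha(\beta')>0$.

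The main obstacle is the remaining case $\mu(Y')<\mu(X')$, where $\Theta_{\alpha'}(\beta')>0$ but $\alpha_qT$ might be negative, so one cannot conclude additively. The plan here is to invoke the $\chi$-positivity hypothesis, which via the weight shift of Remark \ref{weight} allows one to rewrite $\tilde\Theta_\alpha(\beta')$ as a weighted sum over the vertices of $Q(\mathcal{N})\setminus\{q\}$ with non-negative coefficients $\chi_v(\alpha)$; this expression is the bound-quiver analogue of the slope functional appearing in the proof of Theorem \ref{thm1}. Combining this non-negative-weight form with the inequality $\Theta_{\alpha'}(\beta')\geq 0$ then propagates strict positivity to $\tilde\Theta_\alpha(\beta')$ by essentially the same inequality chasing as in the unbound situation handled in Theorem \ref{thm1}, thereby concluding the proof.
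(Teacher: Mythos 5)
Your reduction of $\Sc{Y'}{X}-\Sc{X}{Y'}$ to $\tilde\Theta_\alpha(\beta')=\bigl(\Sc{Y'}{X'}-\Sc{X'}{Y'}\bigr)+\alpha_q T$ is correct, and the two equal-slope cases ($Y'=X'$, and $Y'$ a direct summand of the polystable $X'$) are handled soundly --- modulo the small point that $T_i=n_i>0$ for \emph{every} $i$ is forced not by strong strictness of $\alpha'$ (which only gives $\sum_i t_in_i>0$) but by the requirement that $t$ be a Schur root of the connected quiver $\tilde Q$.

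The genuine gap is exactly where you locate ``the main obstacle'', and the proposed fix does not work. By Remark \ref{connection} the King form attached to the weight $\chi$ is $\tilde\Theta_\alpha(\beta)=\chi_0\beta_{q_0}-\sum_{l\neq q_0}\chi_l\beta_l$: the coefficients at all non-root vertices are $-\chi_l\leq 0$, so $\chi$-positivity gives no sign control on $\tilde\Theta_\alpha(\beta')$; proving this quantity positive on all subrepresentations \emph{is} the stability statement being established. The ``inequality chasing'' of Theorem \ref{thm1} is not available either, because there the positivity was imported wholesale from Schofield's Theorem \ref{schofield} for the \emph{unbound} quiver, which is precisely what fails for $Q(\mathcal N)$. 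Concretely, $T=-\Sc{S_q}{Y'}$ can be strictly negative (take $Y'$ supported only at $t(N_q)$ and the vertices above it, so $T=-(|N_q|-1)\dim Y'_{t(N_q)}$), and one then needs the quantitative bound $\Theta_{\alpha'}(\beta')>\alpha_q|T|$, which no sign bookkeeping yields. The paper closes this case homologically: choosing $Y'$ general in its dimension class so that $\Ext(Y',X/Y')=0$ (Theorem \ref{sch2}) and $\Hom(X/Y',Y')=0$ (Lemma \ref{polystable}), it writes $\Sc{Y'}{X}-\Sc{X}{Y'}=\dim\Hom(Y',X/Y')+\dim\Ext(X/Y',Y')-\dim\Ext^2(X/Y',Y')$ and then kills the $\Ext^2$ term --- either by enlarging $Y'$ to a subrepresentation $\tilde Y$ via gluing path modules $T_l$, whose cost is controlled by $\chi$-positivity through $\Sc{T_l}{X}-\Sc{X}{T_l}\leq 0$ (this is the actual role of $\chi$-positivity), or, when $Y'_0\cap X_\cap=0$, by showing $P(q)$ has injective dimension one. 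Some homological input of this kind is indispensable; the purely numerical argument you sketch cannot close the case.
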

\begin{proof}
It is straightforward to check that $\Sc{X'_i}{X}-\Sc{X}{X_i'}=t_0n_i>0$. Thus assume that $Y'$ has no
direct summand isomorphic to $X'_i$ for every $i=1,\dots m$.
Consider the commutative diagram (\ref{cd}).
%\[
%\begin{xy}
%\xymatrix@R20pt@C20pt{
%&&0&0\\&0\ar[r]&S_q^r\ar[u]\ar@{=}[r]&S_q^r\ar[u]\ar[r]&0\\
%0\ar[r]&Y'\ar[r]\ar[u]&X\ar[u]\ar[r]&X/Y'\ar[u]\ar[r]&0\\
%0\ar[r]&Y'\ar@{=}[u]\ar[r]&X'\ar[u]\ar[r]&X'/Y'\ar[u]\ar[r]&0\\
%&0\ar[u]&0\ar[u]&0\ar[u]&}
%\end{xy}
%\]
Since $\Ext(Y',X/Y')\cong\Ext(Y',X'/Y')$ we have $\Ext(Y',X/Y')=0$
by Theorem \ref{sch2}. Moreover, since $X'$ is polystable, by
Lemma \ref{polystable} we get $\Hom(X/Y',Y')\cong\Hom(X'/Y',Y')=0$.
Therefore, we have
\begin{eqnarray*}
\Sc{Y'}{X}-\Sc{X}{Y'}&=&\Sc{Y'}{X/Y'}-\Sc{X/Y'}{Y'}\\
&=&\dim\Hom(Y',X/Y')+\dim\Ext(X/Y',Y')-\dim\Ext^2(X/Y',Y').
\end{eqnarray*}
Define $X_{\cap}:=\bigcap_{l\in N_q}X_l$. First assume that
$Y'_0\cap X_{\cap}\neq 0$. For $l\in N_q$ let $\tau_l$ be the unique
path from $l$ to $t(N_q)$ and $\tau_{t(N_q)}$ be the unique path
from $t(N_q)$ to the root. Let $T_l$ and $T_{t(N_q)}$ respectively
be the representations such that $(T_l)_{l'}=k$ for all $l'$ such that
$l'$ is a tail of some arrow in $\tau_l$ and $(T_l)_{l'}=0$
otherwise. Moreover, we assume $(T_l)_{\rho}=\mathrm{id}$ where it
makes sense. In the same way, we define $T_{t(N_q)}$. Define
$d_0:=\dim Y'_0\cap X_{\cap}-\dim Y'_{t(N_q)}\cap X_{\cap}$. Then we
obtain an exact sequence
\[\ses{Y'}{Y''}{T_{t(N_q)}^{d_0}}\]
where we just glue $T_{t(N_q)}^{d_0}$ to $Y'_0\cap X_{\cap}\neq 0$.
Moreover, if we define $d_l:=\dim Y''_{0}\cap X_{\cap}-\dim
Y''_{l}\cap X_{\cap}$ for all $l\in N_q$, in the same manner we get
\[\ses{Y''}{\tilde{Y}}{\bigoplus_{l\in N_q} T_l^{d_l}}.\]
Note that we obviously have $\tilde{Y}\subset X$. Now by construction we have $\dim\Ext^2(S_q,\tilde{Y})=0$ and,
therefore, $\dim\Ext^2(X/\tilde{Y},\tilde{Y})=0$. Note that we have $\dim\Ext(S_q,\tilde{Y})=\dim\cap_{l\in N_q}
\tilde{Y}_l\neq 0$. Moreover, since the dimension vector is
$\chi$-positive we have
\[\Sc{T_l}{X}-\Sc{X}{T_l}\leq 0\]
for $l\in N_q\cup t(N_q)$. Thus we obtain
\[0<\Sc{\tilde{Y}}{X}-\Sc{X}{\tilde{Y}}=\Sc{Y'}{X}-\Sc{X}{Y'}+\sum_{l\in N_q\cup t(N_q)}(\Sc{T_l}{X}-\Sc{T_l}{X})\leq \Sc{Y'}{X}-\Sc{X}{Y'}.\]
Now assume $Y'_0\cap X_{\cap}=0$. In particular, $S_q$ is no direct
summand of $X'/Y'$. Let $P(q)$ be the indecomposable projective
module corresponding to the vertex $q$ which is given by the vector
spaces $P(q)_l=k$ for all $l\succeq q$ and $P(q)_0=k$ and the
identity map where it makes sense. Now it is straightforward that
the injective dimension of $P(q)$ is one because the cokernel of
$P(q)\hookrightarrow I(0)$ is also injective, where $I(0)$ denotes the indecomposable injective module corresponding to the root. Since
$\Hom(P(q),Y')=0$, there exists a short exact sequence
\[\ses{P(q)^{\dim (X/Y')_q}}{X/Y'}{\overline{X/Y'}}.\]
Since $P(q)$ has injective dimension one and
$\dim(\overline{X/Y'})_q=0$, we have $\Ext^2(X/Y',Y')=0$. Thus the
claim follows.
\end{proof}
The following lemma is used to prove the next proposition:
\begin{lem}\label{ext}
Let a general representation of dimension vector $\alpha'$ be polystable with canonical decomposition $\alpha'=\bigoplus_{i=1}^m (\alpha'_i)^{t_i}$ and let $\beta'\hookrightarrow\alpha'$ such that a general representation of dimension $\beta'$ has no direct summand of dimension $\alpha'_i$ for all $i=1,\ldots,m$. Then there exist general polystable representations $X'=\bigoplus_{i=1}^m (X'_i)^{t_i}$ of dimension $\alpha'$ with $\underline{\dim}X'_i=\alpha'_i$ of 
$\mathcal{N'}$ such that there exists a subrepresentation $Y'$ of $X'$ of dimension $\beta'$ satisfying
\[\dim\Ext(X',Y')\geq \dim\Ext(Y',X').\]
\end{lem}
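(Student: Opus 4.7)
The plan is to use Lemma \ref{polystable} twice: first directly to kill $\Hom(X',Y')$, and second, indirectly, to extract from the long exact sequence an upper bound on $\dim\End(Y')$ which, together with a bound on $\dim\Hom(Y',Z')$, yields the inequality.

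First, since every indecomposable summand $Y_j'$ of $Y'$ is an indecomposable subrepresentation of the polystable $X'$ whose dimension is different from every $\alpha_i'$, Lemma \ref{polystable} gives $\Hom(X',Y_j')=0$ for each $j$, hence $\Hom(X',Y')=0$. Because $Q(\mathcal{N}')$ is unbound, the Euler identity then gives $\dim\Ext(X',Y')=-\Sc{X'}{Y'}$ while $\dim\Ext(Y',X')=\dim\Hom(Y',X')-\Sc{Y'}{X'}$, so the inequality to prove is equivalent to
\[\Theta_{\alpha'}(\beta')=\Sc{Y'}{X'}-\Sc{X'}{Y'}\ \ge\ \dim\Hom(Y',X').\]

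Next, using $\beta'\hookrightarrow\alpha'$ and Theorem \ref{sch2}, I would fix $X'$ to be a general polystable representation of dimension $\alpha'$ and $Y'\subset X'$ to be a generic subrepresentation of dimension $\beta'$ satisfying $\Ext(Y',Z')=0$, where $Z':=X'/Y'$; by genericity $Y'$ itself is a general representation of dimension $\beta'$, so the hypothesis on summands (and thus Step~1) applies. Applying $\Hom(-,Y')$ to the defining short exact sequence and using $\Hom(X',Y')=0$ gives
\[0\to\Hom(Z',Y')\to 0\to\End(Y')\to\Ext(Z',Y')\to\Ext(X',Y')\to\Ext(Y',Y')\to 0,\]
so $\Hom(Z',Y')=0$ and, in particular, $\End(Y')$ embeds into $\Ext(Z',Y')$, giving
\[\dim\End(Y')\ \le\ \dim\Ext(Z',Y')\ =\ -\Sc{Z'}{Y'}\ =\ \Sc{\beta'}{\beta'}-\Sc{\alpha'}{\beta'}.\]
Applying $\Hom(Y',-)$ and using $\Ext(Y',Z')=0$ gives
\[\dim\Hom(Y',X')\ \le\ \dim\End(Y')+\dim\Hom(Y',Z')\ =\ \dim\End(Y')+\Sc{\beta'}{\alpha'-\beta'}.\]
Substituting the bound on $\dim\End(Y')$ and simplifying yields $\dim\Hom(Y',X')\le \Sc{\beta'}{\alpha'}-\Sc{\alpha'}{\beta'}=\Theta_{\alpha'}(\beta')$, as required.

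The main technical obstacle will be to justify the genericity choice in the second step: a generic sub $Y'\subset X'$ of dimension $\beta'$ must simultaneously be a general representation of dimension $\beta'$ (so the hypothesis on direct summands of dimension $\alpha_i'$ applies) and satisfy $\Ext(Y',Z')=0$. This is a standard but delicate application of upper semicontinuity of $\dim\Ext$ combined with Theorem \ref{sch2}; once both genericity conditions are secured, the Euler-form manipulations above yield the result directly.
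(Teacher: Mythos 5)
Your argument is correct and follows essentially the same route as the paper's proof: the same short exact sequence $0\to Y'\to X'\to X'/Y'\to 0$ with $\Ext(Y',X'/Y')=0$ supplied by Theorem \ref{sch2} and $\Hom(X',Y')=0$ supplied by Lemma \ref{polystable}, the same two long exact sequences, and the same key embedding $\End(Y')\hookrightarrow\Ext(X'/Y',Y')$; you merely rephrase the target inequality in terms of the Euler form before combining the estimates. The genericity issue you flag (that $Y'$ must simultaneously be general of dimension $\beta'$ and satisfy $\Ext(Y',X'/Y')=0$) is glossed over in the paper as well and is resolved by taking a generic point of the incidence variety of pairs $Y'\subset X'$, which dominates $R_{\beta'}(Q(\mathcal{N}'))$ and, since $\beta'\hookrightarrow\alpha'$, also $R_{\alpha'}(Q(\mathcal{N}'))$.
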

\begin{proof}
By Theorem \ref{sch2}, we can assume that there exists a subrepresentation $Y'$ of $X'$ such that $\dim\Ext(Y',X'/Y')=0$. Since $\Hom(X',Y')=0$ and $\Ext^2(X'/Y',Y')=0$, the exact sequence
\[\ses{Y'}{X'}{X'/Y'},\]
induces long exact sequences
\[0\rightarrow\Hom(Y',Y')\rightarrow\Ext(X'/Y',Y')\rightarrow\Ext(X',Y')\rightarrow\Ext(Y',Y')\rightarrow 0\]
and
\begin{eqnarray*}
&0\rightarrow\Hom(Y',Y')\rightarrow\Hom(Y',X')\rightarrow\Hom(Y',X'/Y')\rightarrow\Ext(Y',Y')
\rightarrow\Ext(Y',X')\rightarrow 0.\end{eqnarray*} Since we have
$\dim\Ext(Y',X')\leq\dim\Ext(Y',Y')=-\Sc{Y'}{Y'}+\dim\Hom(Y',Y')$,
we get
\begin{eqnarray*}\dim\Ext(X',Y')&=&\dim\Ext(X'/Y',Y')-\Sc{Y'}{Y'}\\
&\geq&\dim\Ext(X'/Y',Y')+\dim\Ext(Y',X')-\dim\Hom(Y',Y').
\end{eqnarray*}
Since $\dim\Ext(X'/Y',Y')\geq\dim\Hom(Y',Y')$, we obtain
\[\dim\Ext(X',Y')\geq \dim\Ext(Y',X').\]
\end{proof}

Next assume that $r_0>0$ and that no direct summand of $Y'$ is a
direct summand of $Y$. Then we have the following:

\begin{prop}\label{stability}
Let a general representation with dimension vector $\alpha'$ be
polystable and let
\[\ses{X'}{X}{S_q^{t_0}}\]
be some stable extension of some general representation $X'$ with $\underline{\dim} X'=\alpha'$. Moreover, assume
$\beta'\hookrightarrow\alpha'$. Let
\[\ses{Y'}{Y}{S_q^{r_0}}\]
with $1\leq r_0\leq t_0$ and $\underline{\dim} Y'=\beta'$ such that $Y$
is a subrepresentation of $X$. Then we have
\[\Sc{Y}{X}-\Sc{X}{Y}>0\]
and $Y$ has no direct summand isomorphic to $S_q$.
\end{prop}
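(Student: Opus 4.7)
First, the claim that $Y$ contains no direct summand isomorphic to $S_q$ is immediate: any such summand would yield an injection $S_q \hookrightarrow Y \hookrightarrow X$, contradicting the identity $\Hom(S_q,X)=0$ already recorded in the discussion preceding the proposition. So the substantive part is the strict slope inequality.

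For this, I would start from the summary formula derived just before the statement, namely
\begin{equation*}
\Sc{Y}{X} - \Sc{X}{Y} = \bigl[\Sc{Y'}{X'} - \Sc{X'}{Y'}\bigr] + \dim\Ext(S_q^{t_0},Y') - \dim\Ext(S_q^{r_0},X) - \dim\Hom(X,S_q^{r_0}) - \dim\Ext^2(S_q^{t_0},Y').
\end{equation*}
Because $X'$ is polystable for $\Theta_{\alpha'}$ and $Y' \hookrightarrow X'$, the first bracket is $\geq 0$, so the task reduces to producing a non-negative bound on the remaining $S_q$-contributions together with a strict inequality from at least one of the two sources.

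The key step will be to translate $Y \subset X$ into a proper sub-object of the $\tilde Q$-representation $X(e)$ attached to the extension class defining $X$, via the categorical equivalence recorded earlier in this subsection. The short exact sequence $0 \to Y' \to Y \to S_q^{r_0} \to 0$ picks out a subspace $W_0 \subset k^{t_0}$ of dimension $r_0$ at the root vertex $l_0$, while the image of $Y'$ under the canonical projections onto the polystable summands $X_i'$ of $X'$ fixes non-negative multiplicities $s_i \leq t_i$ at the remaining vertices; thus $Y$ corresponds to a $\tilde Q$-subrepresentation of dimension $(r_0,s_1,\ldots,s_m)$. The assumption that no direct summand of $Y'$ is a direct summand of $Y$, combined with Lemma \ref{polystable} applied to the indecomposable summands of $Y'$, guarantees that this sub-object is \emph{proper} and does not split off any $X_i'$-summand; the $\tilde Q$-stability of $X(e)$ then supplies the required strict slope inequality.

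The main obstacle lies precisely in making this translation rigorous: one needs a term-by-term matching between the $S_q$-contributions in the summary formula and the quantities appearing in the $\tilde Q$-stability condition for $X(e)$, and one must verify that the non-split hypothesis on $Y$ genuinely upgrades the weak $\tilde Q$-inequality to a strict one (so that the case $\Sc{Y'}{X'} = \Sc{X'}{Y'}$, which can occur when $\beta'$ lies in the polystable support of $\alpha'$, still produces a strict bound). Once this dictionary is in place, adding the non-negative first bracket delivers $\Sc{Y}{X} - \Sc{X}{Y} > 0$, as required.
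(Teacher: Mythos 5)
The easy half is fine: a direct summand $S_q\subseteq Y\subseteq X$ would give a nonzero element of $\Hom(S_q,X)$, which vanishes by the discussion preceding the proposition (the paper derives the same fact from stability of the defining extension). The substantive gap is in the main inequality. Your plan is to convert $Y\subset X$ into a proper subrepresentation of the $\tilde Q$-representation attached to the extension and invoke $\tilde Q$-stability. But the equivalence of categories recorded before the proposition only covers representations admitting a filtration $\ses{\bigoplus_i (X_i')^{t_i}}{X}{S_q^{t_0}}$, i.e.\ those whose bottom layer is a direct sum of the stable summands $X_i'$. A general subrepresentation $Y$ of $X$ sits in $\ses{Y'}{Y}{S_q^{r_0}}$ where $Y'$ is an arbitrary subrepresentation of $X'$ of dimension $\beta'$; it need not be isomorphic to $\bigoplus_i(X_i')^{r_i}$, and then the ``multiplicities $s_i$'' you want to read off from the projections onto the $X_i'$ are not defined --- the image of $Y'$ in $(X_i')^{t_i}$ is just some subrepresentation, not a power of $X_i'$ (Lemma \ref{polystable} only identifies indecomposable subrepresentations admitting a nonzero map \emph{from} $X'$). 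So your dictionary exists only in the special case $Y'\cong\bigoplus_i(X_i')^{r_i}$, which is exactly the first and easier case of the paper's proof: there $\Sc{Y'}{X}-\Sc{X}{Y'}=0$ and the $S_q$-terms reduce to $-r_0\sum_i t_in_i+t_0\sum_i n_ir_i>0$ by Schur-rootness of $t$ and $r\hookrightarrow t$.

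In the complementary case $Y'\subsetneq X'$ the bookkeeping in your plan is also miscalibrated: the leftover $S_q$-contributions are typically \emph{negative} (they contain $-r_0\sum_i t_in_i-\dim\Hom(X,S_q^{r_0})$ against only $t_0\dim\Ext(S_q,Y')$), so it does not suffice to observe that the bracket $\Sc{Y'}{X'}-\Sc{X'}{Y'}$ is nonnegative; one needs a quantitative lower bound on $\dim\Hom(Y',X'/Y')$ (equivalently on $\Sc{Y'}{X'/Y'}$, using $\Ext(Y',X'/Y')=0$ from Theorem \ref{sch2}) large enough to beat those negative terms. The paper obtains this by a mechanism entirely absent from your sketch: it forms the universal extension $\ses{X'}{\tilde X}{S_q^{v}}$ with $v=\sum_i n_it_i$, studies the quiver Grassmannian $\mathrm{Gr}_{\beta}(\tilde X)$ and the proper projection $\Pi_q$ to $\mathrm{Gr}_{\beta_q}(\tilde X_q)$, and uses the tangent-space identification $T_{\tilde Y}\cong\Hom(\tilde Y,\tilde X/\tilde Y)$ to deduce $\dim\Hom(Y',X'/Y')\geq r_0v-t_0\dim\Ext(S_q,Y')$, together with Lemma \ref{ext} in the subcase where $\Pi_q$ is not surjective. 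None of this is recoverable from $\tilde Q$-stability alone, so the proposal as written proves only a special case of the claim.
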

\begin{proof}
The second statement is obvious since the first sequence is stable.

By the construction of Proposition \ref{Y'} we can assume that we generally have $\Ext^2(S_q,Y')=0$. Indeed, otherwise there exists a dimension vector of greater slope such that we generally have $\Ext(S_q,Y')=0$. It follows that $\Ext^2(S_q,Y)=0$, and since we also have $\Ext(X',Y)=0$, we obtain $\Ext^2(X,Y)=0$.

First assume that $Y'\cong \bigoplus(X'_i)^{r_i}$ with $r_i\leq t_i$. Recall that $\Sc{Y'}{X}-\Sc{X}{Y'}=0$ and consider
\[-\dim\Ext(S_q^{r_0},X)-\dim\Hom(X,S_q^{r_0})+\dim\Ext(S_q^{t_0},Y')=-r_0\sum_{i=1}^mt_in_i+t_0\sum_{i=1}^mn_ir_i.\]
Since the extension is stable, we have that $t:=(t_0,t_1,\ldots,t_m)$ is
a Schur root of the quiver $\tilde{Q}$ considered in this section.
Moreover, we have $r:=(r_0,r_1,\ldots,r_m)\hookrightarrow t$. In
particular, we have $\Sc{r}{t}-\Sc{t}{r}>0$. But, this means
\[-r_0\sum_{i=1}^mt_in_i+t_0\sum_{i=1}^mn_ir_i>0.\]

Next assume that $Y'\subsetneq X'$. In particular, we have $\Ext(X'/Y',Y')\neq 0$. Define $X_{\cap}:=\bigcap_{l\in N_q}X'_l$ and define $Y_{\cap}$ analogously. Let $v:=\sum_{i=1}^mn_it_i$ and $e_1,\ldots,e_{v}$ be a basis of $\Ext(S_q,X')$ and let $e=(e_1,\ldots,e_{v})$.
Then we consider the representation $\tilde{X}$ obtained by
\[e:\ses{X'}{\tilde{X}}{S_q^{v}}.\]
Let $\beta\in\Nn Q_0$ be a dimension vector. Consider the quiver Grassmannian $\mathrm{Gr}_{\beta}(\tilde{X})$ of $\tilde{X}$ with dimension vector $\beta$, i.e.
\[\mathrm{Gr}_{\beta}(\tilde{X})=\{N\in\mathrm{Rep}_{\beta}(Q)\mid N\subset \tilde{X}\}\] 
which is a closed (and hence projective) subvariety of $\prod_{l\in Q_0}\mathrm{Gr}_{\beta_l}(\tilde{X}_l)$, see for instance \cite{cr}.  We should mention that quiver Grassmannians are usually defined for quivers without relations. But, since $\tilde{X}$ satisfies the commutativity relations, it is straightforward to check that every subrepresentation satisfies these relations as well.  This is because every linear map associated with $\tilde{X}$ is injective, all arrows are oriented to the unique root and, moreover, we only have at most one arrow between each two vertices. Thus we actually deal with usual quiver Grassmannians.
Consider the natural map $\Pi_q:\mathrm{Gr}_{\beta}(\tilde{X})\hookrightarrow \prod_{l\in Q_0}\mathrm{Gr}_{\beta_l}(\tilde{X}_l)\rightarrow\mathrm{Gr}_{\beta_q}(\tilde{X}_q)$. In particular, $\Pi_q$ is a projective morphism and, thus, it is proper and it follows that $\Pi_q(\mathrm{Gr}_{\beta}(\tilde{X}))$ is closed in $\mathrm{Gr}_{\beta_q}(\tilde{X}_q)$. 

Every subrepresentation $Y'$ of $X'$ corresponds to a subrepresentation $\tilde{Y}$ of $\tilde{X}$ with $\beta_q:=\dim\tilde{Y}_q=\dim Y_\cap$. Let $T_{\tilde{Y}}(\mathrm{Gr}_{\beta}(\tilde{X}))$ be the tangent space at the point corresponding to $\tilde{Y}$.
First assume that $\Pi_q$ is surjective. Then we have $\dim \mathrm{Gr}_{\beta}(\tilde{X})\geq\dim \mathrm{Gr}_{\beta_q}(\tilde{X}_q)=\beta_q(v-\beta_q)$. In particular, we have
$\dim T_{\tilde{Y}}(\mathrm{Gr}_{\beta}(\tilde{X}))\geq\beta_q(v-\beta_q)$ for all $\tilde{Y}\in\mathrm{Gr}_{\beta}(\tilde{X})$. Moreover, by \cite[Proposition 6]{cr} we have
$ T_{\tilde Y}(\mathrm{Gr}_{\beta}(\tilde{X}))\cong\Hom(\tilde{Y},\tilde{X}/\tilde{Y})$.
Obviously, we have $\dim\Hom(Y',X'/Y')\geq\dim\Hom(\tilde{Y},\tilde{X}/\tilde{Y})$.
Since $\beta'\hookrightarrow\alpha'$, applying Theorem \ref{sch2}, we get $\Sc{Y'}{X'/Y'}=\dim\Hom(Y',X'/Y')\geq \beta_q(v-\beta_q)=\dim\Ext(S_q,Y')(v-\dim\Ext(S_q,Y'))$. Since we have $r_0\leq \min\{t_0,\dim\Ext(S_q,Y')\}$, treating the two cases $t_0\leq\dim\Ext(S_q,Y')$ and $t_0>\dim\Ext(S_q,Y')$ separately, it is straightforward to check that
\[\dim\Hom(Y',X'/Y')\geq r_0v-t_0\dim\Ext(S_q,Y')\]
Now the claim follows because $\Ext(Y',X'/Y')\neq 0$.

Next assume that $\Pi_q$ is not surjective. Since the image of $\Pi_q$ is closed and since we deal with a stable extension, we generally have that $\dim \tilde{X}_q\cap Y_{\cap}=\dim Y_\cap+t_0-v\geq r_0.$ Thus we have $\dim\Ext(S^{t_0}_q,Y')\geq t_0(v-t_0+r_0)\geq r_0v$ because $v\geq t_0$. In summary, applying Lemma \ref{ext}, we get
$\Sc{Y}{X}-\Sc{X}{Y}>0$.

\end{proof}

Combining Propositions \ref{Y'} and \ref{stability}, we obtain the following result:
\begin{thm}\label{nonprimitive}
Let $\mathcal{N}=\mathcal{N'}\cup \{q\}$ such that $\mathcal{N}$ and $\mathcal{N}'$ are related. Let a general representation with dimension vector $\alpha'\in \mathbb{N}Q(\mathcal{N}')_0$ be
polystable with respect to
$\Theta_{\alpha'}=\Sc{\rule{0.2cm}{0.4pt}}{\alpha'}-\Sc{\alpha'}{\rule{0.2cm}{0.4pt}}$
and let
\[\ses{X'}{X}{S_q^{t_0}}\]
be some stable extension of some general representation $X'$ with
$\underline{\dim} X'=\alpha'$ such that $\alpha$ is $\chi$-positive
where $\chi$ is given as in Remark \ref{weight}. Then $X$ is stable
and can be unitarized with the weight $\chi$.
\end{thm}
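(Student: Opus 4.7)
The plan is to reduce the stability check on $X$ to the two cases already handled by Propositions \ref{Y'} and \ref{stability}, and then to invoke the unitarizability criterion stated just before Proposition \ref{Y'}. Since $\tilde{\Theta}_\alpha = \Theta_\alpha$ by Remark \ref{weight}, it suffices to verify $\Theta_\alpha(\underline{\dim}Y) > 0$ for every proper nonzero subrepresentation $Y \subsetneq X$; this yields $\Theta_\alpha$-stability of $X$ and unitarizability with the prescribed weight $\chi$ in one stroke.

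The first step is to pass to a Krull--Schmidt decomposition $Y = \bigoplus_j Y^{(j)}$ and use linearity of $\Theta_\alpha$, so it is enough to prove the inequality on each indecomposable summand. For each $Y^{(j)}$, the commutative diagram (\ref{cd}) produces a short exact sequence
\[
\ses{(Y^{(j)})'}{Y^{(j)}}{S_q^{r_0^{(j)}}}
\]
with $(Y^{(j)})' = Y^{(j)} \cap X'$. If $r_0^{(j)} = 0$, then $Y^{(j)}$ is a subrepresentation of $X'$, and Proposition \ref{Y'}, combined with the $\chi$-positivity of $\alpha$, gives $\Theta_\alpha(\underline{\dim}Y^{(j)}) > 0$. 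If $r_0^{(j)} \geq 1$, I would invoke Proposition \ref{stability}: the bound $r_0^{(j)} \leq t_0$ follows from $\sum_j r_0^{(j)} = r_0 \leq t_0$; the condition $\underline{\dim}(Y^{(j)})' \hookrightarrow \alpha'$ holds automatically via Theorem \ref{sch2} because $(Y^{(j)})'$ embeds in the general representation $X'$; and the key hypothesis that no direct summand of $(Y^{(j)})'$ is a direct summand of $Y^{(j)}$ reduces, by indecomposability, to the statement $Y^{(j)} \not\subseteq X'$, which is exactly what $r_0^{(j)} \geq 1$ guarantees.

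Summing the inequalities over $j$ yields $\Theta_\alpha(\underline{\dim}Y) > 0$, showing simultaneously that $X$ is $\Theta_\alpha$-stable and, by the unitarizability criterion together with Remark \ref{weight}, that it admits a unitarization with weight $\chi$. All the real technical difficulty has been absorbed into Propositions \ref{Y'} and \ref{stability}; the theorem itself is a clean Krull--Schmidt reduction. The only subtle point, and the one place where care is needed, is confirming that each indecomposable summand falls cleanly into exactly one of the two prepared cases; this is precisely where indecomposability is used, namely to convert the Proposition \ref{stability} hypothesis into the transparent condition $r_0^{(j)} \geq 1$.
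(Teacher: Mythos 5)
Your proposal is correct and follows essentially the same route as the paper, whose entire proof is the single sentence ``Combining Propositions \ref{Y'} and \ref{stability}, we obtain the following result''; your Krull--Schmidt reduction to indecomposable summands is simply an explicit way of organizing that combination, and in particular it correctly discharges the standing hypothesis of Proposition \ref{stability} (that no direct summand of $Y'$ is a direct summand of $Y$) by observing that for an indecomposable summand this is equivalent to $r_0^{(j)}\geq 1$. Nothing further is needed.
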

%\begin{proof}
%We use the notations from above. Let $Y$ be a proper subrepresentation of $X$ and $Y'$ be the induced representation of the primitive poset $\mathcal{N'}$.\\ If $l=0$ and $Y'$ has no direct summand isomorphic to $X_i'$, the claim follows from Proposition \ref{Y'}.\\
%Next assume that $Y'$ has no direct summand isomorphic to $X'_i$, $\dim Y_q>0$ and that no direct summand of $Y'$ is a direct summand of $Y$. By the consideration from above we have $\dim\Ext(X',Y')\geq\dim\Ext(Y',X')$. Moreover, by Proposition \ref{stability} we have $\dim\Hom(Y',X')> \Psi(X,Y)$ and by Lemma \ref{polystable} we have $\Hom(X',Y')=0$. Thus we get $\tilde{\Theta}_{\alpha}(\underline{\dim} Y)>0$.\\
%Combining these three cases, the claim follows.
%\end{proof}
We have the following Corollary:
\begin{cor}\label{corUnit}
Let $\mathcal{N}$ and $\mathcal{N'}$ be related posets such the elements of $\mathcal{N}\backslash\mathcal N'$ are not comparable.
Let a general representation of $Q(\mathcal{N}')$ with dimension vector $\alpha' \in\mathbb{N}Q(\mathcal{N})_0$ be
polystable with respect to $\Theta_{\alpha'}$ and let
\[\ses{X'}{X}{\bigoplus_{q\in\mathcal{N}\backslash\mathcal{N'}} S_q^{t_q}}\]
be an extension of some general representation $X'$ with
$\underline{\dim} X'=\alpha'$. Moreover, let the induced extensions
$e_{q}\in\Ext(S_q^{t_q},X')$ be polystable such that at least one
extension is stable and such that the dimension vector induced by
the stable extension is $\chi$-positive. Then $X$ is stable and can
be unitarized with some weight $\chi$.

\end{cor}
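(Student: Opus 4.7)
The plan is to reduce to Theorem \ref{nonprimitive} by exploiting the stable extension at one chosen vertex of $\mathcal{N}\backslash\mathcal{N'}$, and then to invoke Lemma \ref{extUnit} to absorb the remaining (merely polystable) extensions at the other incomparable vertices.

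First I would pick $q^*\in\mathcal{N}\backslash\mathcal{N'}$ for which the induced extension $e_{q^*}\in\Ext(S_{q^*}^{t_{q^*}},X')$ is stable; such a vertex exists by hypothesis, and, also by hypothesis, the dimension vector of the corresponding intermediate representation is $\chi$-positive. Set $\mathcal{N}'':=\mathcal{N'}\cup\{q^*\}$, so that the pair $(\mathcal{N}',\mathcal{N}'')$ is related in the sense of Section \ref{boundquivers}. The short exact sequence
\[\ses{X'}{Y}{S_{q^*}^{t_{q^*}}}\]
induced by $e_{q^*}$, together with the polystability of $X'$, the stability of $e_{q^*}$, and the $\chi$-positivity of $\underline{\dim}\,Y$, meets all the assumptions of Theorem \ref{nonprimitive}. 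Therefore $Y$ is a stable and unitarizable representation of $Q(\mathcal{N}'')$ with some weight.

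Next, because the elements of $\mathcal{N}\backslash\mathcal{N'}$ are pairwise incomparable, each remaining $q\in\mathcal{N}\backslash\mathcal{N'}\setminus\{q^*\}$ is incomparable to $q^*$ and to the other added vertices, so the given extension $e_q$ endows $Y$ with a subspace $Y_q\subseteq\bigcap_{l\in N_q}Y_l$ of dimension $t_q$, namely the image of the copy of $S_q^{t_q}$ in the middle term of the combined extension. Moreover, the pairwise incomparability guarantees that the combined extension $\ses{X'}{X}{\bigoplus_{q}S_q^{t_q}}$ decomposes as independent attachments of the individual subspaces $Y_q$, so that iteratively attaching all of them to $Y$ reproduces precisely $X$. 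Applying Lemma \ref{extUnit} iteratively, starting from the indecomposable unitarizable representation $Y$, shows that $X$ is still indecomposable and unitarizable with some weight $\chi$. Finally, since $X$ is both indecomposable and unitarizable, the King correspondence recalled in Remark \ref{connection} implies that $X$ is stable with respect to the slope induced by $\chi$.

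The main obstacle is verifying that the iterative enlargement of $Y$ by the subspaces prescribed by the extensions $e_q$ truly reproduces the given $X$, rather than merely producing some representation of the same dimension vector. This is handled by the incomparability hypothesis on the added vertices: since no two of them interact via the partial order or via the commutativity relations that define the bound quiver $Q(\mathcal{N})$, the combined extension by $\bigoplus_{q}S_q^{t_q}$ splits independently across $q$, so the order in which the subspaces $Y_q$ are attached is immaterial and the final representation is uniquely determined by $Y$ together with these subspaces, as required.
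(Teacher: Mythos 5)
Your proposal is correct and follows essentially the same route as the paper: first apply Theorem \ref{nonprimitive} to the single stable extension to obtain a stable, unitarizable representation, then attach the remaining subspaces coming from the other (polystable) extensions via Lemma \ref{extUnit}. The extra care you take in checking that the combined extension splits independently across the pairwise incomparable added vertices, so that the iterative attachment really reconstructs $X$, is a point the paper leaves implicit but is argued correctly here.
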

\begin{proof}
We first consider the stable extension
\[\ses{X'}{X''}{{S_q^{t_q}}}.\]
By Theorem \ref{nonprimitive} we have that $X''$ can be unitarized
with the weight as given in Remark \ref{weight}. Now we can apply
Lemma \ref{extUnit} in order to obtain the result.
\end{proof}
It is clear that we can apply Lemma \ref{extUnit}, after having constructed stable representations using the preceding Corollary, in order to construct stable representations of related posets which do not satisfy the condition of the preceding Corollary.
  
Note that, having constructed a stable representation, by Remark
\ref{bem1}, we know a lower bound of the dimension of the moduli
space of stable points.

\section{Unitarizable and non-unitarizable representations of posets}\label{unitnon}

\subsection{Some examples of unitarizable representations}\label{examples}
Using the algorithm provided in Section \ref{boundquivers}, for
instance in tame cases, fixing a dimension vector one can build
families of unitarizable Schurian representations of non-primitive
posets that depend on several complex parameters. Below we provide a
few examples of such posets and dimension vectors. We
start with polystable representations of primitive posets. Then we
glue some subspaces using Corollary \ref{corUnit} in order to
construct Schurian representations, and afterwards we can glue some
extra subspaces as described in Lemma \ref{extUnit}.
\begin{center}
$
\xymatrix @-1pc {&5 \ar@{<-}[dl] \ar@{<-}[d] \ar@{<-}[dr] & \\
                  4 \ar@{<-}[d] \ar@{<-}[dr] & 3 \ar@{<-}[d]  & 4 \ar@{<-}[d] \\
                  2 & \fbox{1}  & 3 \ar@{<-}[d]  \\
                    &    &  2 \ar@{<-}[d] \\
                    &    &  1 }
                         \qquad \qquad
\xymatrix @-1pc {&3n \ar@{<-}[dl] \ar@{<-}[d] \ar@{<-}[drr] & \\
                  2n \ar@{<-}[d]  & 2n \ar@{<-}[d]\ar@{<-}[dr] &    & 2n \ar@{<-}[d] \ar@{<-}[dl]\\
                  n & n  & \fbox{n-1} & n }
                         \qquad\qquad
\xymatrix @-1pc {&18 \ar@{<-}[dl] \ar@{<-}[d] \ar@{<-}[drr] && \\
                  9  \ar@{<-}[d]  & 12 \ar@{<-}[d] \ar@{<-}[dl] \ar@{<-}[ddr] &    & 15 \ar@{<-}[ddll] \ar@{<-}[d]\\
                  \fbox{2} \ar@{<-}[d] & 8\ar@{<-}[d] &  & 12  \ar@{<-}[d]\ar@{<-}[dl] \\
                   \mycirc{1} & \fbox{1}  & \fbox{2} \ar@{<-}[d] & 9 \ar@{<-}[d] \\
                    & & \mycirc{1}  & 6 \ar@{<-}[d] \\
                    &  &  & 3 }$
\end{center}
Here by $\fbox{i}$ we denote those elements that are glued to the
primitive posets as in Corollary \ref{corUnit}, and by $\mycirc{j}$
we denote elements glued to stable representation using Lemma
\ref{extUnit}.  It is clear that one can produce many of such
examples. Notice that for some posets and their dimension vectors
the provided technique is not applicable, for example in the
following cases
\begin{center}
$
\xymatrix @-1pc {& &  \ar@{<-}[dl]  \ar@{<-}[dll] 4  \ar@{<-}[d]  \ar@{<-}[dr] & & \\
                 3 \ar@{<-}[dr] \ar@{<-}[d] & 3 \ar@{<-}[dl] \ar@{<-}[dr] \ar@{<-}[d] & 3\ar@{<-}[dl] \ar@{<-}[d] & 2\\
               \fbox{1} &  \fbox{1} &   \fbox{1} &} \qquad
               \qquad
\xymatrix @-1pc {&7 \ar@{<-}[dl] \ar@{<-}[d] \ar@{<-}[dr] & \\
                  5 \ar@{<-}[d] & 6 \ar@{<-}[d] \ar@{<-}[ddl] & 6 \ar@{<-}[d] \ar@{<-}[ddll] \\
                  3  \ar@{<-}[d] & 4 \ar@{<-}[d]   \ar@{<-}[d]  & 3   \\
                  \fbox{1} &  \fbox{2}  \ar@{<-}[d] \ar@{->}[uur]  & \\
                   &  \mycirc{1} & \\ } $
\end{center}
In these cases the corresponding representations of the primitive
posets are not polystable because the canonical decompositions of
the dimension vectors are $(4;3;3;3;2)=(3;2;2;2;2)\oplus(1;1;1;1;0)$
and $(7;3,5;4,6;3,6)=(2;1,1;1,2;1,2)\oplus (1;0,1;1,1;0,1)\oplus
(4;2,3;2,3;2,3)$.

\begin{rem}
\end{rem}
\begin{itemize}

\item An interesting question is whether the stability condition
$\Theta_{\alpha}=\Sc{\rule{0.2cm}{0.4pt}}{\alpha}-\Sc{\alpha}{\rule{0.2cm}{0.4pt}}$
determines a dense subset of Schurian representations of the poset
$\mathcal N$ with dimension $\alpha$ (an analogue of Schofield's
Theorem \ref{schofield}). Notice that it is straightforward to check
that if $X$ is an indecomposable quite sincere representation (i.e. $0\neq \dim X_q\neq \dim X_0, \ q \in \mathcal N$, and $\dim X_{q'}<\dim X_q$ if $q'\prec q$) of a poset of
representation finite type then $X$ is stable with the weight
$\Theta_{\underline{\dim} X}$ (see also \cite{GrushevoyYusenko}).

\end{itemize}

\subsection{Unitarization of rigid modules}

The \emph{rigidity index} (see for example \cite{Katz96})
$\mathrm{rig}(A_i)$ of a collection $(A_1,\ldots,A_n)$ of matrices $A_i\in M_m(\mathbb C)$  is defined by
    $$
        \mathrm{rig}(A_1,\ldots,A_n)=m^2(2-n)+\sum\limits_{i=1}^{n}\dim(Z(A_i)),
    $$
    where $Z(X)$ denotes the commutator of the matrix $X$, i.e.
    \[Z(X)=\{A\in M_m(\mathbb C)\ |\ AX=XA\}.\]

N. Katz, see \cite{Katz96}, showed that if $(A_1,\ldots,A_n)$ is an
irreducible system of matrices satisfying $$A_1\cdot\ldots\cdot A_n=I$$ then
$$\mathrm{rig}(A_1,\ldots,A_n)\in\{2j\mid j\in\Zn,\ j\leq 1\}.$$  
Following Katz we say that the set of matrices is
\emph{rigid} if its rigidity index equals 2, otherwise we say that
the set of matrices is \emph{non-rigid}. 

\begin{lem} \label{commLemma}
    Let $A \in M_m(\mathbb C)$ be an arbitrary Hermitian matrix with
    eigenvalues $\{\lambda_i\}_{i=1}^j$. Let the multiplicity of each
    $\lambda_i$ be $d_i$. Then
    $$
        \dim Z(A)=d_1^2+\ldots+d_j^2.
    $$
\end{lem}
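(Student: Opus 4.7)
The plan is to reduce to the diagonal case via the spectral theorem and then do a block decomposition of the commutant. Here is the outline.

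First, since $A$ is Hermitian, by the spectral theorem there exists a unitary matrix $U$ such that $UAU^{-1}=D$, where $D$ is the block-diagonal matrix $\mathrm{diag}(\lambda_1 I_{d_1},\ldots,\lambda_j I_{d_j})$. The map $B\mapsto UBU^{-1}$ is a linear automorphism of $M_m(\mathbb C)$ that sends $Z(A)$ isomorphically onto $Z(D)$, so it suffices to compute $\dim Z(D)$.

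Next, I would partition any $B\in M_m(\mathbb C)$ into blocks $B=(B_{ik})_{1\le i,k\le j}$ conforming to the block structure of $D$, i.e.\ $B_{ik}\in M_{d_i\times d_k}(\mathbb C)$. A direct computation gives $(DB)_{ik}=\lambda_i B_{ik}$ and $(BD)_{ik}=\lambda_k B_{ik}$, so $DB=BD$ is equivalent to $(\lambda_i-\lambda_k)B_{ik}=0$ for all $i,k$. Since the $\lambda_i$ are pairwise distinct, this forces $B_{ik}=0$ for $i\neq k$, while the diagonal blocks $B_{ii}\in M_{d_i}(\mathbb C)$ are unconstrained.

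Therefore $Z(D)$ is isomorphic to $\bigoplus_{i=1}^{j}M_{d_i}(\mathbb C)$, and counting dimensions yields
\[
\dim Z(A)=\dim Z(D)=\sum_{i=1}^{j}d_i^2,
\]
as claimed. There is no real obstacle here; the argument is entirely routine linear algebra, and the only thing worth emphasizing is that the pairwise distinctness of the eigenvalues is precisely what kills the off-diagonal blocks.
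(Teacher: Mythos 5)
Your argument is correct and is essentially the same as the paper's: the paper also reduces to the diagonal representative of the conjugacy class and identifies $Z(D)$ with $M_{d_1}(\mathbb C)\oplus\ldots\oplus M_{d_j}(\mathbb C)$, merely stating as "obvious" the block computation you spell out. No issues.
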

\begin{proof}
    It is clear that the dimension of the commutator of the matrix $A$ does not depend on the representative of
    the conjugacy class of $A$. Hence we can assume that
    $$A=\mathrm{diag}\{\lambda_1,\ldots,\lambda_1,\ldots,\lambda_j,\ldots,\lambda_j\}.$$
    Then $Z(A)=M_{d_1}(\mathbb C)\oplus\ldots\oplus
    M_{d_j}(\mathbb
    C)$. Now the statement is obvious.
\end{proof}
  
Recall that a module $X$ is called \emph{rigid} if
$\textrm{Ext}(X,X)=0.$

\begin{thm} \label{rigidprimitive}
    Let $\mathcal N$ be a primitive poset of type $(m_1,\ldots,m_n)$. Assume that $X$ is an indecomposable rigid strict representation of $Q(\mathcal N)$. Then it is unitarizable with some weight $\chi$ and the corresponding representation of the algebra $\mathcal B_{\Gamma(\mathcal N),\omega}$, viewed as collection  of Hermitian matrices $A_1,\ldots,A_n$, is rigid.
\end{thm}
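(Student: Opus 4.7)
The plan is to reduce the unitarizability claim to Corollary \ref{cor_uniq}, and then to express the rigidity index directly in terms of the Tits form of $Q(\mathcal{N})$, thereby reducing rigidity to the fact that $\underline{\dim} X$ is a real root.

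First I would establish that $\alpha := \underline{\dim} X$ is a real Schur root. Since $Q(\mathcal{N})$ is a star-shaped (hence unbound and acyclic) quiver, the path algebra is hereditary and $\Sc{\alpha}{\alpha}=\dim\End(X)-\dim\Ext(X,X)$. Rigidity gives $\Sc{\alpha}{\alpha}=\dim\End(X)\geq 1$, while by Kac's theorem, for an indecomposable representation one has $\Sc{\alpha}{\alpha}\leq 1$. Thus $\Sc{\alpha}{\alpha}=1$, $\End(X)=k$, and $\alpha$ is a real Schur root. Since $\mathcal{N}$ is primitive, every strict dimension vector is $\chi$-positive (as noted in the text), so Corollary \ref{cor_uniq} applies and $X$ is unitarizable with some weight $\chi$. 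The corresponding $*$-representation of $\mathcal{A}_{\mathcal{N},\chi}$ produces, via the transformation recalled in Section \ref{staralg}, a $*$-representation of $\mathcal{B}_{\Gamma(\mathcal{N}),\omega}$ given by Hermitian operators $A_1,\ldots,A_n$.

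Next I would compute the rigidity index. Write $\alpha_0=\dim V_0$ and $\alpha_k^{(j)}=\dim V_k^{(j)}$ with the convention $\alpha_0^{(j)}=0$, and set $d_k^{(j)}=\alpha_k^{(j)}-\alpha_{k-1}^{(j)}$ for $k=1,\ldots,m_j$. The spectrum of $A_j=\sum_k\omega_k^{(j)}\widetilde{P}_k^{(j)}$ consists of the $\omega_k^{(j)}$ with multiplicity $d_k^{(j)}$ together with (if $\alpha_{m_j}^{(j)}<\alpha_0$) the eigenvalue $0$ of multiplicity $\alpha_0-\alpha_{m_j}^{(j)}$. By Lemma \ref{commLemma},
\[
\dim Z(A_j)=\sum_{k=1}^{m_j}(d_k^{(j)})^2+(\alpha_0-\alpha_{m_j}^{(j)})^2.
\]
Expanding the squares yields
\[
\dim Z(A_j)=2\sum_{k=1}^{m_j}(\alpha_k^{(j)})^2-2\Bigl(\sum_{k=1}^{m_j-1}\alpha_k^{(j)}\alpha_{k+1}^{(j)}+\alpha_{m_j}^{(j)}\alpha_0\Bigr)+\alpha_0^2,
\]
in which the bracketed sum is precisely the contribution of branch $j$ to the edge term of the Euler form of the star-shaped quiver.

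Summing over $j=1,\ldots,n$ and comparing with
\[
\Sc{\alpha}{\alpha}=\alpha_0^2+\sum_{j,k}(\alpha_k^{(j)})^2-\sum_j\Bigl(\sum_{k=1}^{m_j-1}\alpha_k^{(j)}\alpha_{k+1}^{(j)}+\alpha_{m_j}^{(j)}\alpha_0\Bigr)
\]
gives $\sum_{j=1}^n\dim Z(A_j)=2\Sc{\alpha}{\alpha}+(n-2)\alpha_0^2$. Substituting into the definition of the rigidity index,
\[
\mathrm{rig}(A_1,\ldots,A_n)=\alpha_0^2(2-n)+\sum_j\dim Z(A_j)=2\Sc{\alpha}{\alpha}=2,
\]
using $\Sc{\alpha}{\alpha}=1$ from Step 1. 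The only genuine obstacle lies in Step 1, namely in justifying that a rigid indecomposable representation of a hereditary algebra is Schurian with real-root dimension vector; once this is in hand, the rest is bookkeeping with the Euler form.
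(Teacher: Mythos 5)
Your proposal is correct and follows essentially the same route as the paper: reduce unitarizability to Corollary \ref{cor_uniq} via the fact that a rigid indecomposable over the hereditary star-shaped quiver has a real Schur root, then verify the identity $\mathrm{rig}(A_1,\ldots,A_n)=2\Sc{\alpha}{\alpha}$ by expressing the eigenspace multiplicities through the dimension vector and applying Lemma \ref{commLemma}. The only (cosmetic) difference is that you expand $\dim Z(A_j)$ directly while the paper completes squares in $\Sc{X}{X}$, and your justification that rigidity plus indecomposability forces $\Sc{\alpha}{\alpha}=1$ and $\End(X)=k$ is in fact spelled out a bit more carefully than in the paper.
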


\begin{proof}

By Corollary \ref{cor_uniq} the unique indecomposable representation
$X$ of a real Schur root can be unitarized. This representation is
\emph{rigid} due to $\dim\End(X)-\dim\Ext(X,X)=1$. In this case
Schofield's Theorem \ref{schofield} can be checked easily, see also
\cite[Lemma 5.1]{hp}.
%Indeed
%let $Y$ be a proper subrepresentation of $X$. Then applying
%$\Hom(-,X)$ to
%$$
%    0\rightarrow Y\rightarrow X\rightarrow Z\rightarrow 0
%$$
%we get
%$$
%    0\rightarrow \Hom(Z,X)\rightarrow \End(X)=\mathbb C\rightarrow \Hom(Y,X)\rightarrow %\Ext(Z,X)\rightarrow
%    \Ext(X,X)=0.
%$$
%Since $\End(X)=\Cn$, we have $\Hom(Z,X)=0$ and it follows that
%$\End(X)=\mathbb C\rightarrow \Hom(Y,X)$ is injective. Similarly,
%applying $\Hom(X,-)$ we get
%$$
%    0\rightarrow \Hom(X,Y)\rightarrow \End(X)=\mathbb C\rightarrow \Hom(X,Z)\rightarrow \Ext(X,Y)\rightarrow
%    \Ext(X,X)=0,
%$$
%and, thus, $\End(X)=\mathbb C\rightarrow \Hom(X,Z)$ is injective.
%Hence $\Sc{ X}{ Y}\leq0$ and $\Sc{ Y}{ X}\geq1$, and we have that
%$$\Sc{ Y}{ X}-\Sc{ X}{ Y}>0$$ for all
%$Y$. That means that $X$ is unitarizable.\\
    We take the Euler characteristic for $X$, i.e.
    $$
       \Sc{ X}{ X}=\dim \textrm{End}(X)-\dim \textrm{Ext}(X,X)=\sum_{q\in Q_0} \dim X_q \dim X_q - \sum_{\rho:{q\rightarrow q'}} \dim X_q \dim X_q'=1.
    $$
    Using Lemma \ref{commLemma} we have that
    \begin{equation*}
    \begin{split}
        \mathrm{rig}(A_1,\dots,A_n)&=m^2(2-n)+\sum\limits_{i=1}^{n}\dim(Z(A_i))=m^2(2-n)+\sum\limits_{i=1}^{n}\sum\limits_{j=1}^{m_i+1}{d^{(i)}_j}^2,
    \end{split}
    \end{equation*}
    where $m=\dim X_0$ and $d^{(i)}_j$ is the dimension of the $j$-th eigenspace of the
    corresponding Hermitian matrix $A_i$, which are given by
    \begin{equation*}
        \begin{split}
            d_{1}^{(i)}=\dim X_{1}^{(i)},\quad d_{j}^{(i)}&=\dim X_{j}^{(i)}-\dim X_{j-1}^{(i)}, \quad 2\leq j
            \leq m_i,\\
            d_{m_i+1}^{(i)}&=\dim X_0-\dim X^{(i)}_{m_i}.
        \end{split}
    \end{equation*}
    Then taking $\Sc{X}{X}$ we get
    \begin{equation*}
        \begin{split}
            \Sc{X}{X}&=(\dim X_0)^2+\sum_{i=1}^{n}\sum_{j=1}^{m_i}(\dim X_{j}^{(i)})^2\\
            &-\sum_{i=1}^{n}\sum\limits_{j=1}^{m_i-1}(\dim X_{j}^{(i)})(\dim X_{j+1}^{(i)})-
            \sum_{i=1}^{n} (\dim X_0)(\dim X_{m_i}^{(i)})\\
            &=(\dim {X}_0)^2 + \frac{1}{2}\sum_{i=1}^{n}\left((\dim X_{1}^{(i)})^2+\sum\limits_{j=2}^{m_i}
            ((\dim X_{j}^{(i)})-(\dim X_{j-1}^{(i)}))^2 \right)\\
            &+ \frac{1}{2}\sum_{i=1}^{n}
            ((\dim {X}_{0})-(\dim X_{m_i}^{(i)}))^2-\frac{n}{2}(\dim {X}_0)^2\\
            &=\frac{1}{2}(m^2(2-n)+\sum\limits_{i=1}^{n}\sum\limits_{j=1}^{m_i+1}{d^{(i)}_j}^2)=
            \frac{1}{2}\mathrm{rig}(A_1,\ldots,A_n).
        \end{split}
    \end{equation*}
    Since $\Sc{X}{X}=1$ because $X$ is rigid, the corresponding set of the matrices is also rigid.
\end{proof}

Let $\mathcal N$ be a non-primitive poset, and let $\mathcal N'$ be a
related primitive poset, i.e. $\mathcal N=\mathcal{N'}\cup\{q_1,\ldots,q_n\}$.
The following Corollary is straightforward:

\begin{cor} \label{rigidNonprimitive}
    Assume that $X$ is a rigid Schurian representation of $\mathcal N$
    such that the following condition holds
    $$
        \dim X_{q_i} \leq \sum_{l \in N_{q_i}}
        \dim X_l-(|N_{q_i}|-1)\dim X_{t(N_{q_i})}
    $$
    for all $q_i$ and that the corresponding representation of the related primitive poset is Schurian. Then $X$ can be unitarized with some  weight.
\end{cor}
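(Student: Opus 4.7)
The plan is to exhibit $X$ as an extension of its restriction $X'$ to the primitive subposet $\mathcal N'$ by simples at the additional vertices, and then apply Corollary \ref{corUnit}. First I would set $X' := X|_{Q(\mathcal N')}$. Because $X'$ is Schurian by hypothesis and $Q(\mathcal N')$ is hereditary, Theorem \ref{schofield} gives that $\alpha' := \underline{\dim} X'$ is a Schur root, so a general representation of dimension $\alpha'$ is $\Theta_{\alpha'}$-stable (in particular polystable), and the specific $X'$ lies in the open Schurian locus. Then I would write
$$0 \to X' \to X \to \bigoplus_{i=1}^{n} S_{q_i}^{t_i} \to 0, \qquad t_i := \dim X_{q_i},$$
and observe that each induced extension $e_{q_i} \in \Ext(S_{q_i}^{t_i}, X')$ is encoded, via the correspondence described before Corollary \ref{corUnit}, by a representation of the generalised Kronecker quiver $\tilde Q_i$ with $n_i := \dim\bigcap_{l\in N_{q_i}} X'_l$ arrows and dimension vector $(t_i,1)$. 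This representation is the one determined by the natural map $X_{q_i} \to \bigcap_{l\in N_{q_i}} X'_l$ coming from the poset structure of $X$.

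The central observation is then that strictness of $X$ (automatic since $X$ is a representation of $\mathcal N$) forces this map to be injective, which is precisely equivalent to the corresponding representation of $\tilde Q_i$ being Schurian (and so $(t_i,1)$ being a Schur root of $\tilde Q_i$). The hypothesis $\dim X_{q_i} \leq \sum_{l\in N_{q_i}} \dim X_l - (|N_{q_i}|-1)\dim X_{t(N_{q_i})}$, combined with the fact that $n_i$ is always at least as large as the RHS by Lemma \ref{dimform} (the generic intersection dimension lower-bounds the actual one), yields $t_i \leq n_i$, which is precisely the condition needed for $(t_i,1)$ to be a Schur root of $\tilde Q_i$. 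Hence each $e_{q_i}$ is a stable extension in the sense of Corollary \ref{corUnit}.

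To conclude, I would pick any index $i$ with $t_i > 0$ (if none exists, $X \cong X'$ as a representation of $\mathcal N$ and Theorem \ref{thm1} applies directly). For the weight $\chi$ defined in Remark \ref{weight}, the new component $\chi_{q_i} = \sum_{l\in N_{q_i}} \alpha_l - (|N_{q_i}|-1)\alpha_{t(N_{q_i})}$ is $\geq t_i \geq 0$ by hypothesis, while the remaining components take the form $\chi'_l \pm m\,\alpha_{q_i}$ for small nonnegative integers $m$; these can all be forced to be nonnegative by replacing the underlying linear form $\Theta_{\alpha'}$ by a large positive multiple plus a suitable integer multiple of $\dim$, an operation that by Remark \ref{connection} preserves the set of stable representations. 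This provides $\chi$-positivity of the dimension vector induced by the chosen stable extension, so the hypotheses of Corollary \ref{corUnit} are verified and $X$ is stable and unitarizable with some weight.

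The main obstacle I anticipate is the identification at the heart of the second step: rigorously converting strictness of $X$ into the statement that the associated $\tilde Q_i$-representation lies in the (open) Schurian locus rather than just having a Schur dimension vector, so that our specific $e_{q_i}$ is genuinely a stable extension. Once this translation is in hand, the $\chi$-positivity manipulation via Remark \ref{connection} and the application of Corollary \ref{corUnit} are routine.
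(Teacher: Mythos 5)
Your overall strategy --- restrict $X$ to the related primitive poset to get $X'$, realise $X$ as an extension of $X'$ by the simples $S_{q_i}^{t_i}$, check that each induced extension class is stable, and invoke Corollary \ref{corUnit} --- is the same skeleton as the paper's proof. The genuine gap is that you never use the rigidity hypothesis, and without it the key input to Corollary \ref{corUnit} is not available. That corollary (and Theorem \ref{nonprimitive} behind it) is a statement about extensions of a \emph{general} polystable representation of dimension $\alpha'$: its proof, via Propositions \ref{Y'} and \ref{stability}, repeatedly uses genericity of $X'$ (the intersection formula of Lemma \ref{dimform}, the existence for each $\beta'\hookrightarrow\alpha'$ of a subrepresentation $Y'$ with $\Ext(Y',X'/Y')=0$ from Theorem \ref{sch2}, the estimates of Lemma \ref{ext}, and polystability itself). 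Knowing only that $X'$ is Schurian places it in \emph{one} of the relevant open subsets of $R_{\alpha'}(Q(\mathcal N'))$, but a specific Schurian representation need not lie in the intersection of all of them, need not be stable, and need not satisfy the generic dimension formula --- so "the specific $X'$ lies in the open Schurian locus" does not license applying Corollary \ref{corUnit} to this particular $X'$.

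This is exactly what the rigidity assumption repairs, and it is how the paper argues: one first checks that the restriction $X'$ is itself rigid, so that $\Ext(X',X')=0$ forces its $G_{\alpha'}$-orbit to be open, hence dense in the irreducible variety $R_{\alpha'}(Q(\mathcal N'))$; together with $\End(X')=k$ this gives $\Sc{\alpha'}{\alpha'}=1$, so $\alpha'$ is a real Schur root and $X'$ \emph{is} the general (stable) representation of its dimension vector (this is the content of Corollary \ref{cor_uniq} and Theorem \ref{rigidprimitive}). Only then does Corollary \ref{corUnit} apply. Your remaining steps --- the identification of the extension class with a representation of the generalised Kronecker quiver $\tilde Q_i$ of dimension vector $(t_i,1)$, the equivalence of injectivity of $X_{q_i}\to\bigcap_{l\in N_{q_i}}X'_l$ with Schurian-ness/stability of that Kronecker representation, the inequality $t_i\le n_i$ from the stated hypothesis, and the normalisation of the weight via Remark \ref{connection} --- are sound, but the argument is incomplete until you insert the rigidity-of-$X'$ step that upgrades your specific $X'$ to a general one.
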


\begin{proof}
    It is straightforward to see that the corresponding
    representation $X'$ of $\mathcal N'$ is rigid.
    %Hence it is stable with respect to the weight $\Theta_{\underline{\dim} X'}$.
    Then we can
    apply Proposition \ref{rigidprimitive} and Corollary \ref{corUnit}
    to obtain the statement.
\end{proof}

\subsection{ADE classification of unitarizable representations}

\begin{thm}\label{ade} Let $Q(\mathcal{N})$ be an unbound quiver induced by a  poset $\mathcal{N}$. Then we have:
\begin{enumerate}
\item Every indecomposable strict representation of $Q(\mathcal{N})$ is
unitarizable if and only if $Q(\mathcal{N})$ is a Dynkin quiver.

\item Every Schurian strict representation of $Q(\mathcal{N})$ is
unitarizable if and only if $Q(\mathcal{N})$ is a subquiver of an extended Dynkin
quiver.

\item There exist families of non-isomorphic unitarizable and non-unitarizable Schurian strict representations which depend on
arbitrary many continuous parameters if and only if $Q(\mathcal{N})$ contains an extended Dynkin quiver as a proper subquiver.
\end{enumerate}
\end{thm}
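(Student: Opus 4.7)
I would prove the three statements in turn, with (3) carrying most of the technical weight.

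For (1), the forward direction follows almost at once from Corollary \ref{cor_uniq}. Indeed, since $Q(\mathcal N)$ is unbound, $\mathcal N$ is primitive (as noted at the end of Section \ref{reprpos}), so $Q(\mathcal N)$ is a star-shaped tree; if it is Dynkin, then by Gabriel's theorem every indecomposable has a real positive root as dimension vector, each such root is a Schur root, and Corollary \ref{cor_uniq} yields unitarizability of every strict indecomposable. For the converse, if $Q(\mathcal N)$ is not Dynkin then it contains (or equals) an extended Dynkin subquiver $\widetilde Q$, and I would exhibit on $\widetilde Q$ an indecomposable strict representation of dimension vector $2\delta$, with $\delta$ the null root, using the standard representation theory of tame hereditary algebras. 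Extending by zeros (which is legitimate because zero-dimensional vertices lie strictly outside the support, leaving all inclusion maps injective) yields a strict indecomposable representation of $Q(\mathcal N)$ that is not Schurian; by Lemma \ref{zusa}(4) it fails to be stable for any slope function, and by Remark \ref{connection} it cannot be unitarized for any weight.

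For (2), the forward direction reduces via (1) to the case where $Q(\mathcal N)$ is itself extended Dynkin, so $\mathcal N$ is a primitive tame poset and the result is the theorem of \cite{hp} cited in the introduction. For the converse, if $Q(\mathcal N)$ properly contains an extended Dynkin subquiver then $\mathcal N$ is a primitive wild poset. I would construct a Schurian strict representation $X$ of dimension $\alpha$ together with subrepresentations $U_1,\ldots,U_k$ whose dimension vectors $\beta_i$ produce linear inequalities on $\chi$ via Theorem \ref{huTheorem} that admit no common solution with $\chi$ componentwise positive. In the wild regime the indefiniteness of the Tits form $\Sc{\,\cdot\,}{\,\cdot\,}$ allows one to arrange subrepresentations that are simultaneously of high slope with respect to some admissible weights and of low slope with respect to all others, providing the required obstruction.

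For (3), the ``$\Leftarrow$'' direction demands, on a quiver $Q(\mathcal N)$ containing a wild subquiver, families of both unitarizable and non-unitarizable Schurian strict representations whose parameter count is unbounded. On the unitarizable side, Kac's theorem produces $\chi$-positive Schur roots $\alpha_n$ of arbitrarily large imaginary defect; Theorem \ref{thm1}, together with the dimension estimate $\dim M^s_{\alpha_n}(Q(\mathcal N)) \geq 1-\Sc{\alpha_n}{\alpha_n}$ from Remark \ref{bem1}, yields open subsets of $\chi(\alpha_n)$-unitarizable Schurian representations of arbitrarily many parameters. On the non-unitarizable side, iterating the construction from the converse of (2) on these $\alpha_n$ gives positive-dimensional families of Schurian representations failing Theorem \ref{huTheorem} for every admissible weight, which extend to $Q(\mathcal N)$ via the gluing techniques of Section \ref{boundquivers}. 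The ``$\Rightarrow$'' direction of (3) is immediate by contraposition from (1) and (2).

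The main obstacle I anticipate lies in the converse of (2) (and in the non-unitarizable half of (3)): explicit construction of Schurian representations admitting \emph{no} unitarization whatsoever. Schofield's Theorem \ref{schofield} only guarantees $\Theta_\alpha$-stability of a \emph{general} representation of a Schur root $\alpha$, so one must work inside a carefully chosen non-general locus of $R_\alpha(Q(\mathcal N))$. A natural route is to pick $\alpha$ so that non-general Schurian extensions of smaller Schur subroots exist and then verify, via Theorem \ref{sch2} and direct analysis of the resulting subrepresentation lattice, that the destabilizing configuration persists against every positive weight. Wildness is exactly what ensures that such extensions exist and that the corresponding non-unitarizable Schurian locus has sufficient dimension to feed into the parameter count of (3).
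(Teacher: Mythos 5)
Your treatment of part (1), of the forward direction of part (2) (via \cite[Proposition 5.2]{hp} and Remark \ref{connection}), and of the unitarizable half of part (3) (Schur roots $\alpha$ with $\Sc{\alpha}{\alpha}$ arbitrarily negative, Theorem \ref{thm1}, and the moduli dimension bound of Remark \ref{bem1}) matches the paper's argument in substance; the paper likewise gets the non-Dynkin converse of (1) from the non-Schur root $2\alpha$ over an isotropic $\alpha$, and iterates doubling of dimension vectors as in Theorem \ref{lastthm} for the parameter count.

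The genuine gap is exactly where you anticipated it: the production of Schurian strict representations that are non-unitarizable for \emph{every} weight. Your plan --- assemble several subrepresentations $U_1,\dots,U_k$ whose inequalities from Theorem \ref{huTheorem} are jointly infeasible, exploiting indefiniteness of the Tits form --- is not how the obstruction is obtained, and as stated it is unclear how to control the full subrepresentation lattice of a non-general point well enough to rule out all positive weights. The paper's key idea is a single, weight-independent destabilizer: choose a Schur root $\tilde\alpha$ with $\Sc{\tilde\alpha}{\tilde\alpha}<0$ (obtained by attaching one extra vertex of dimension $d<\alpha_q$ to an isotropic Schur root $\alpha$ of the extended Dynkin subquiver), take two \emph{non-isomorphic} general stable representations $X,Y$ of dimension $\tilde\alpha$ (so $\Hom(X,Y)=\Hom(Y,X)=0$ and $\Ext(Y,X)\neq 0$), and form a non-split extension $\ses{X}{Z}{Y}$. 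Lemma \ref{End} gives $\End(Z)\subseteq\End(X)=\Cn$, so $Z$ is Schurian, yet $\underline{\dim}X=\tfrac12\underline{\dim}Z$ forces $\mu(X)=\mu(Z)$ for \emph{every} slope function; hence $Z$ is never stable and, being indecomposable, never unitarizable by Remark \ref{connection} --- no system of inequalities needs to be analyzed, because proportionality of dimension vectors kills all weights simultaneously. (A separate small device is needed when the extra vertex must attach to a vertex of dimension one; the paper handles this with a reflection functor.) Since $X$ and $Y$ range over positive-dimensional moduli, and the construction scales, this also supplies the non-unitarizable families of unbounded parameter count required in part (3). Without this proportionality trick, or some equally explicit replacement, your converse of (2) and the non-unitarizable half of (3) remain unproved.
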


\begin{proof}

\emph{The first part} trivially follows from the previous section
observing that in this case all indecomposable representations are
Schurian and rigid. Moreover, if the underlying quiver is not of
Dynkin type, there always exist non Schurian roots. Indeed, we may
consider an isotropic root $\alpha$, i.e. $\langle
\alpha,\alpha\rangle=0$. Now it is easy to check that $2\alpha$ is
no Schur root, but a root, since the canonical decomposition of
$2\alpha$ is $\alpha\oplus\alpha$, see also \cite{sch}.

\emph{Second part.} The representations that correspond to real
Schur roots are obviously rigid and hence unitarizable. In general, by
\cite[Proposition 5.2]{hp} any Schur representation is stable for
some linear form $\Theta$. Thus following Remark \ref{connection} it
can be unitarized with some weight. Let us notice that this result
(together with the description of possible weights) was
alternatively obtained in the series of D.Yakimenko's papers (see \cite{yak} and references therein).

{\it Third part.}  Let $\alpha$ be an indivisible isotropic Schur
root of an extended Dynkin quiver. Thus a general representation $X$
with dimension vector $\alpha$ is Schurian and can be unitarized by
Theorem \ref{thm1}. By adding an extra vertex with fixed dimension
$d$ to a vertex $q$ with $\dim X_q>d\geq 1$ to the extended Dynkin
quiver we again get a Schurian representation, say with dimension
vector $\tilde{\alpha}$. In particular, $\tilde{\alpha}$ is a Schur
root. Indeed, we may for instance apply Lemma \ref{End} in order to
see that the new representation is a Schurian representation.

It is easy to check that $\langle \tilde{\alpha},
\tilde{\alpha}\rangle=\langle\alpha,\alpha\rangle+d^2-d\alpha_q<0$.
For two general stable representations $X$ and $Y$ of dimension
$\tilde \alpha$ we have $\mathrm{Hom}(X,Y)=\mathrm{Hom}(Y,X)=0$. Let
\[\ses{X}{Z}{Y}\] be a non-splitting exact sequence. Then by Lemma
\ref{End}, we have $\End(Z)\subseteq\End(X)=\Cn$. Thus, $Z$ is a
semistable Schurian representation which is not stable. Now we can
check by a direct calculation that for every weight $\chi$ we have
that $X$ is a subrepresentation which contradicts $\chi$-stability,
see Lemma \ref{zusa}.

If we want to glue a vertex $q$ to some vertex $q'$ of dimension one
we proceed as follows: first we add an extra arrow
$\rho:q'\rightarrow q$ and consider some non-splitting exact
sequence $\ses{S_q}{Z}{X\oplus X'}$ where $S_q$ is the simple module
corresponding to the vertex $q$ and $X$ and $X'$ are non-isomorphic
Schurian of dimension $\alpha$, thus $\Hom(X,X')=0$. Then, applying
Lemma \ref{End} to the induced sequences $\ses{S_q}{Z'}{X}$ and
$\ses{Z'}{Z}{X'}$ we obtain $\End(Z)=\Cn$. It is easy to check that
$\dim Z$ is an imaginary root which is not isotropic. Now by
applying the reflection functor, see \cite{bgp}, corresponding to
the vertex $q$ we again get a Schurian representation $\tilde{Z}$.
But $\tilde{Z}$ corresponds to some filtration and we can proceed as
in the first case.

The existence of a family of unitarizable representations depending on an arbitrary number of parameters follows in the same manner as Theorem \ref{lastthm}.
\end{proof}

\begin{rem} \label{bem}
\end{rem}
\begin{itemize}
\item
    Let us remark that the first and third part of the theorem hold
    for posets in general.

    \noindent If the poset is
    of representation finite type, then each indecomposable representation can
    be unitarized with some weight (see \cite{GrushevoyYusenko} for
    the proof).

    \noindent     If the poset contains a poset of wild type as a subposet, the same argument as for unbound quivers can be applied. Thus, there is a family of non-isomorphic non-unitarizable
    Schurian representations of the poset that depends on arbitrary many continuous
    parameters.

    \noindent But it is an open question whether all Schurian representations of tame posets with unoriented cycles are unitarizable. Like in Section \ref{examples}, in many cases it is possible to construct an open subset of unitarizable representations. But as in the case without cycles the constructed weight does not apply for all Schurian representations.
\end{itemize}

\section{Complexity of the description of $*$-representations of
$\mathcal A_{\mathcal N,\chi}$}\label{kap}

\begin{thm}\label{lastthm}

Let $\mathcal N$ be a poset of representation wild type. Then it is possible to
choose the weight $\chi_{\mathcal N}$ in such a way that for an
arbitrary natural number $n$ there exists a family of non-isomorphic
Schurian representations of $\mathcal N$ depending on at least $n$
complex parameters which can be unitarized with the weight
$\chi_{\mathcal N}$.
\end{thm}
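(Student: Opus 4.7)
The plan is to construct, for each $n$, a Schur root $\alpha^{(n)}$ of the (bound) quiver $Q(\mathcal{N})$ whose moduli space of stable representations has dimension at least $n$, with all $\alpha^{(n)}$ lying on a common ray so that the associated unitarization weights are proportional. Since the orthoscalar relation $\sum\chi_q P_q=\chi_0\mathbb{I}$ is invariant under simultaneous positive rescaling of $\chi$, a single weight $\chi_{\mathcal{N}}$ will then govern all the families at once.

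First I would handle the primitive case, where $Q(\mathcal{N})$ is a star-shaped quiver of wild representation type. By the Kac--Schofield theory combined with Theorem \ref{schofield}, there exists a strict imaginary Schur root $\alpha$ of $Q(\mathcal{N})$ with $\Sc{\alpha}{\alpha}<0$. Strict negativity of the Tits form forces the canonical decomposition of $m\alpha$ to consist of the single summand $m\alpha$, so that $m\alpha$ is itself a Schur root for every $m\geq 1$. Moreover $m\alpha$ is strict, and for primitive posets strictness coincides with $\chi$-positivity, so Theorem \ref{thm1} yields that a general representation of dimension $m\alpha$ is unitarizable with the weight $\chi(m\alpha)=m\chi(\alpha)$. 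Setting $\chi_{\mathcal{N}}:=\chi(\alpha)$ handles all $m$ simultaneously by scale invariance. By Remark \ref{bem1}, the smooth moduli space $M^s_{m\alpha}(Q(\mathcal{N}))$ has dimension $1-m^2\Sc{\alpha}{\alpha}$ and parametrises pairwise non-isomorphic stable, hence Schurian, representations; choosing $m$ large completes this case.

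For the non-primitive case I would write $\mathcal{N}=\mathcal{N}'\cup\{q_1,\dots,q_s\}$ with $\mathcal{N}'$ the related unbound poset and apply Theorem \ref{nonprimitive} together with Corollary \ref{corUnit}, so that stable unitarizable representations of $Q(\mathcal{N})$ arise as extensions
\[\ses{X'}{X}{\bigoplus_{i=1}^{s}S_{q_i}^{t_{q_i}}}\]
of a polystable representation $X'$ of $Q(\mathcal{N}')$ by a direct sum of simples at the added vertices. If $\mathcal{N}'$ is itself wild, the primitive-case argument applied to $Q(\mathcal{N}')$ produces polystable families $X'$ with arbitrarily many moduli, and gluing simples preserves this. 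If $\mathcal{N}'$ is tame or finite while $\mathcal{N}$ is wild, the additional parameters must come from the auxiliary quiver $\tilde{Q}$ of Section \ref{boundquivers} that classifies the extensions: taking $X'$ as a sum of sufficiently many pairwise non-isomorphic Schurian summands of equal slope and scaling everything by $m$ renders $\tilde{Q}$ wild and produces Schur roots of arbitrary defect whose stable extensions yield the required families. In every subcase, the weight prescribed by Remark \ref{weight} is homogeneous of degree one in the total dimension vector, so once more a single $\chi_{\mathcal{N}}$ suffices.

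The hard part will be the last subcase, where $\mathcal{N}'$ is tame but $\mathcal{N}$ is wild. I will have to verify that scaling $X'$ genuinely renders $\tilde{Q}$ wild and that the resulting Schur roots of $\tilde{Q}$ correspond to $\chi$-positive strongly strict dimension vectors of $Q(\mathcal{N})$; once these two technical points are settled, stability of the constructed extensions under the homogeneous weight is furnished by Propositions \ref{Y'} and \ref{stability}.
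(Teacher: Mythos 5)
Your overall mechanism is the same as the paper's: pick an anisotropic Schur root, exploit the linearity of the weight $\chi(\alpha)$ in $\alpha$ together with the scale-invariance of the orthoscalar relation so that one weight governs the whole ray, and let $1-\Sc{m\alpha}{m\alpha}$ grow. For the primitive case your route is in fact a little cleaner than the paper's: you invoke Schofield's canonical-decomposition result that $m\alpha$ is a Schur root whenever $\alpha$ is a Schur root with $\Sc{\alpha}{\alpha}<0$, whereas the paper reduces to the critical wild posets $(1,1,1,1,1)$, $(1,1,1,2)$, $(2,2,3)$, $(1,3,4)$, $(1,2,6)$, writes down explicit roots with $\Sc{\alpha}{\alpha}=-1$ and explicit weights, and then proves by hand that $2^n\alpha$ supports Schurian representations by iterating a non-split extension $\ses{X}{Z}{X'}$ and applying Lemma \ref{End}. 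Both arguments are valid; the one thing you assert without construction is the existence of a strict, $\chi$-positive anisotropic Schur root on a general wild star, which the paper supplies by exhibiting the roots on the critical subposets (the paper is itself terse about passing from the critical subposet back to all of $\mathcal N$).

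The genuine gap is the non-primitive case, and you have deferred exactly the subcase that actually occurs. The only critical non-primitive wild poset is $(N,5)$, and its related poset $(2,1,5)$ is the \emph{tame} star $\widetilde{E}_8$ — so "$\mathcal N'$ wild" never saves you for the critical poset, and the "hard last subcase" you postpone is the whole problem. Moreover, your sketched plan for it does not obviously work: you propose to generate parameters by scaling $X'$ so that the auxiliary quiver $\tilde Q$ becomes wild and then varying over Schur roots $(t_0,t_1,\ldots,t_m)$ of $\tilde Q$. But the weight of Remark \ref{weight} is determined by the total dimension vector $(\,\sum_i t_i\alpha,\;t_0\,)$, and varying the $\tilde Q$-root moves this vector off any fixed ray, so your scale-invariance argument no longer produces a single $\chi_{\mathcal N}$. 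The paper avoids this by keeping the extension data essentially fixed and drawing the parameters from a different source: it takes the isotropic Schur root $\alpha=(6;2,4;3;1,2,3,4,5)$ of $(2,1,5)$, which lies in a one-parameter tube, forms a polystable $X'=\bigoplus_{i=1}^m (X'_i)^{t_i}$ with $m$ pairwise non-isomorphic summands chosen from that tube, and glues a stable extension by $S_q^{t_0}$ via Theorem \ref{nonprimitive}; the number of continuous parameters then grows with the number of distinct summands, not with the moduli of $\tilde Q$. To complete your proof you would either need to adopt this device or show that your $\tilde Q$-roots can be chosen along a fixed ray in the total dimension vector, neither of which your proposal currently does.
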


\begin{proof}
Due to Theorem \ref{reptype} we only need to consider critical posets of the 
following types: $(1,1,1,1,1)$, $(1,1,1,2)$, $(2,2,3)$, $(1,3,4)$, $(1,2,6)$ and 
$(N,5)$. Let us consider the
dimension vectors $(2;1;1;1;1;1)$, $(4;2;2;2;1,2)$,
$(6;2,4;2,4;1,2,4)$, $(8;4;2,4,6;1,2,4,6)$ and
$(12;6;4,8;1,2,4,6,8,10)$ respectively of the quivers corresponding
to $(1,1,1,1,1)$, $(1,1,1,2)$, $(2,2,3)$, $(1,3,4)$ and $(1,2,6)$
respectively. In order to see that these are Schur roots, we can
easily construct a Schurian representation of these dimension
vectors. For instance for $(4;2;2;2;1,2)$ we consider a
non-splitting short exact sequence $\ses{X'\oplus X}{Y}{S_5}$ where
$X$ and $X'$ are Schurian representations of dimension vector
$(2;1;1;1;0,1)$ with $\Hom(X,X')=0$. As in the proof of Theorem
\ref{ade} we may apply Lemma $\ref{End}$ to the two induced
sequences. The other cases behave analogously.

For the first five dimension vectors $\alpha_i$ we have that
$\Sc{\alpha_i}{\alpha_i}=-1$. Hence, following Remark \ref{bem1} it
is possible to choose a two parameter family of Schurian
representations. By Theorem \ref{thm1}, a general representation
with this dimension vector can be unitarized with the weights
\begin{align*}
    \chi_{(1,1,1,1,1)}&=(5;2;2;2;2;2),\\
    \chi_{(1,1,1,2)}&=(8;4;4;4;2,3),\\
    \chi_{(2,2,3)}&=(12;4,4;4,4;2,3,4),\\
    \chi_{(1,3,4)}&=(16;8;4,4,4;2,3,4,4),\\
    \chi_{(1,2,6)}&=(24;12;8,8;2,3,4,4,4,4).\\
    \end{align*}
For two general unitarizable representations $X$ and $X'$ of
dimension $\alpha$ we have
$\mathrm{Hom}(X,X')=\mathrm{Hom}(X',X)=0$. The middle term $Z$ of
every non-splitting exact sequence
\[\ses{X}{Z}{X'}\]
has dimension vector $2\alpha$. Moreover, such representations, even
if they are not stable, are Schurian by Lemma \ref{End}. Thus there
exists a non-empty open subset of Schurian representations having
the same dimension vector. Following Theorems \ref{schofield} and
\ref{thm1}, there exists a non-empty open subset of representations
which can be unitarized with the weight $2\chi_{\mathcal N}$ and,
therefore, with the weight $\chi_{\mathcal N}$, too. The dimension
of the corresponding moduli space is $1-\Sc{\dim Z}{\dim
Z}=1-\Sc{2\alpha}{2\alpha}=1-4\Sc{\alpha}{\alpha}=5$, see Remark
\ref{bem1}. Hence there exists a $5$-parameters family of
non-isomorphic representations having dimension vector $2\alpha$
which can be unitarized with the same weight $\chi_{\mathcal N}$.
Then iterating the same procedure for the dimension vector
$2\alpha$, we will obtain the desirable result due to the fact that
$1-\Sc{2^n\alpha}{2^n\alpha}=1+2^{2n}$ growths when iterating.

In the case of the poset $(N,5)$ we proceed as follows. We consider
the related poset $(2,1,5)$ and the isotropic Schur root
$\alpha=(6;2,4;3;1,2,3,4,5)$. This root is strongly strict. Taking a
general polystable representation $X'=\bigoplus_{i=1}^m (X'_i)^{t_i}$
with $\underline{\dim} X'_i=\alpha$ and $X'_i\ncong X'_j$, we can
consider stable extensions
\[\ses{X'}{X}{S^{t_0}_q}.\]
where $(t_0,t_1,\ldots,t_m)$ is a Schur root of the
dual quiver of the $m$-subspace quiver $S(m)$. Note that the intersection of the two
questioned subspaces is of dimension one. By Theorem
\ref{nonprimitive} any such representation $X$ is stable, i.e. in
particular Schurian, and can be unitarized (for instance with the weight
$\chi_{(N,5)}=(11;4,3;1,5;2,2,2,2,2)$ for $(t_0,t_1)=(1,1)$). Since $\alpha$ is an
isotropic root we have a one-parameter family of stable
representations of dimension $\alpha$. In particular, we have a
$d$-parameter family of polystable representations for every tuple
$(t_1,\ldots,t_m)$ with $\sum_{i=1}^mt_i=d$.
\end{proof}

\begin{cor}

Let $\Gamma$ be a star-shaped graph that contains an extended Dynkin
graph as a proper subgraph. Then there exists a character
$\omega_\Gamma$ such that the algebra $\mathcal
B_{\Gamma,\omega_\Gamma}$ has a family of unitary-nonequivalent
irreducible $*$-representations which depends on an arbitrary number
of continuous parameters.
\end{cor}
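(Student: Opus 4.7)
The plan is to reduce this corollary directly to Theorem \ref{lastthm} via the explicit isomorphism between $\mathcal A_{\mathcal N,\chi}$ and $\mathcal B_{\Gamma(\mathcal N),\omega}$ established in Section \ref{staralg}. First I would observe that since $\Gamma$ is star-shaped, there is a primitive poset $\mathcal N$ (unique up to reordering of branches) with $\Gamma(\mathcal N)=\Gamma$. The hypothesis that $\Gamma$ contains an extended Dynkin graph as a \emph{proper} subgraph, combined with the classification in Theorem \ref{reptype}, forces $\mathcal N$ to be a primitive poset of representation wild type: indeed, containing $\widetilde{D}_4$, $\widetilde{E}_6$, $\widetilde{E}_7$ or $\widetilde{E}_8$ properly exactly corresponds to the critical primitive list $(1,1,1,1,1)$, $(1,1,1,2)$, $(2,2,3)$, $(1,3,4)$, $(1,2,6)$.

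Next I would invoke Theorem \ref{lastthm} for the poset $\mathcal N$: it produces a weight $\chi_{\mathcal N}\in\mathbb R_+^{|\mathcal N|+1}$ and, for every $n\in\mathbb N$, a family of pairwise non-isomorphic Schurian representations of $\mathcal N$, all unitarizable with the single weight $\chi_{\mathcal N}$, depending on at least $n$ complex parameters. Each such unitarization is, by definition, an object of $\mathrm{Rep}(\mathcal N,\mathcal H)_{\mathrm{os}}$ satisfying the orthoscalar relation with weight $\chi_{\mathcal N}$, and therefore a $*$-representation of $\mathcal A_{\mathcal N,\chi_{\mathcal N}}$. Schurian representations have endomorphism ring $\mathbb C$, hence the corresponding $*$-representations are irreducible, and non-isomorphism of linear representations translates into unitary non-equivalence of the $*$-representations (isomorphism in $\mathrm{Rep}(Q(\mathcal N),\mathcal H)$ is unitary equivalence, as recalled in Section \ref{staralg}).

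Finally I would transfer this family to $\mathcal B_{\Gamma,\omega_\Gamma}$ using the isomorphism $\mathcal A_{\mathcal N,\chi_{\mathcal N}}\cong\mathcal B_{\Gamma(\mathcal N),\omega}$ described at the end of Section \ref{staralg}: define $\omega_\Gamma$ from $\chi_{\mathcal N}$ by the telescoping rule
\[
\omega_{m_j}^{(j)}=\chi_{m_j}^{(j)},\qquad \omega_i^{(j)}=\chi_i^{(j)}+\omega_{i+1}^{(j)},\qquad \omega_0=\chi_0,
\]
and use the change of variables $\widetilde P_1^{(j)}=P_1^{(j)}$, $\widetilde P_i^{(j)}=P_i^{(j)}-P_{i-1}^{(j)}$, which is an invertible (unitary-equivariant) bijection on $*$-representations. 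Under this bijection, the $n$-parameter family of irreducible unitary-inequivalent $*$-representations of $\mathcal A_{\mathcal N,\chi_{\mathcal N}}$ maps to an $n$-parameter family of irreducible unitary-inequivalent $*$-representations of $\mathcal B_{\Gamma,\omega_\Gamma}$, yielding the claim.

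The only non-routine point is the bookkeeping showing that the bijection between $*$-representations of the two algebras preserves both irreducibility and the parameter count, i.e. that distinct isomorphism classes on the $\mathcal A$-side give distinct isomorphism classes on the $\mathcal B$-side; but this is immediate because $\widetilde P_i^{(j)}\leftrightarrow P_i^{(j)}$ is a linear bijection between the generating tuples that intertwines any unitary equivalence, so the moduli of unitary-equivalence classes are identified. Thus the hard work has already been carried out in Theorem \ref{lastthm}, and the corollary follows with $\omega_\Gamma$ defined as above.
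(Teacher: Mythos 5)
Your proposal is correct and follows essentially the same route as the paper: the paper's proof simply invokes Theorem \ref{lastthm} together with the correspondence between $*$-representations of $\mathcal A_{\mathcal N,\chi}$ and $\mathcal B_{\Gamma(\mathcal N),\omega}$, and then writes out the explicit characters $\omega_\Gamma$ obtained from the weights $\chi_{\mathcal N}$ by exactly the telescoping rule you describe. You spell out a bit more carefully why Schurian representations give irreducible $*$-representations and why non-isomorphism transfers to unitary non-equivalence, which the paper leaves implicit.
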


\begin{proof}
Using the previous theorem and the relations between unitarizable
systems of subspaces and $*$-representations of $\mathcal
B_{\Gamma,\omega_\Gamma}$ it is easy to check that letting
\begin{align*}
    \omega_{(1,1,1,1,1)}&=(5;2;2;2;2;2),\\
    \omega_{(1,1,1,2)}&=(8;4;4;4;2,5),\\
    \omega_{(2,2,3)}&=(12;4,8;4,8;2,5,9),\\
    \omega_{(1,3,4)}&=(16;8;4,8,12;2,5,9,13),\\
    \omega_{(1,2,6)}&=(24;12;8,16;2,5,9,13,17);
\end{align*}
we obtain the desirable statement.
\end{proof}

\medskip

\begin{rem}\label{bem2}
\end{rem}
\begin{itemize}
\item
In \cite{KrugNazRoi2} it was conjectured that if the graph $\Gamma$ contains an extended Dynkin graph as a proper subgraph, then there exists a characters $\omega_\Gamma$ such that the algebra $\mathcal B_{\Gamma,\omega_\Gamma}$ is $*$-wild. The previous Corollary gives possible candidates for $\omega_\Gamma$ among all possible characters, since it is obvious that for such characters the classification task is an extremely difficult problem.  In the case when  $\Gamma=(1,1,1,1,1)$ or $\Gamma=(1,1,1,2)$ it is known that the algebras 
$\mathcal B_{\Gamma,\omega_\Gamma}$ are indeed $*$-wild with the characters $\omega_\Gamma$ given as in the previous Corollary. This is due to \cite[Section 3.1.3]{OstrovskyiSamoilenko} and private communication with S. Rabanovich. The other cases are unknown by now.  
\end{itemize}

% BibTeX users please use one of
%\bibliographystyle{spbasic}      % basic style, author-year citations
%\bibliographystyle{spmpsci}      % mathematics and physical sciences
%\bibliographystyle{spphys}       % APS-like style for physics
%\bibliography{}   % name your BibTeX data base

% Non-BibTeX users please use

\end{document}